\numberwithin{equation}{section}
\newtheorem{thm}{Theorem}[section]
\newtheorem{prop}[thm]{Proposition}
\newtheorem{lem}[thm]{Lemma}
\newtheorem{rem}[thm]{Remark}
\newtheorem{cor}[thm]{Corollary}
\newcommand{\nn}{\nonumber}
\theoremstyle{definition}
\newcommand{\ket}[1]{{| #1 \rangle}}      
\newcommand{\C}{{\mathbb C}}
\newcommand{\Z}{{\mathbb Z}}
\newcommand{\E}{{\mathcal E}}
\newcommand{\F}{\mathcal F}
\newcommand{\gl}{\mathfrak{gl}}
\newcommand{\slt}{\mathfrak{sl}_2}
\newcommand{\End}{\mathop{\rm End}}
\newcommand{\res}{{\rm res}}
\newcommand{\Res}{\mathop{\rm res}}
\newcommand{\Sym}{\mathrm{Sym}}
\newcommand{\wt}{{\rm wt}\,}
\newcommand{\on}{\operatorname}
\newcommand{\mc}{\mathcal}
\newcommand{\al}{\alpha}
\newcommand{\cq}{\check{q}}
\newcommand{\cb}{\check{\beta}}
\newcommand{\calA}{\mathcal{A}}
\newcommand{\bq}{\bar q}
\begin{document}

\begin{title}{Affinization of shifted  quantum affine $\gl_2$}
\end{title}

\author{B. Feigin, M. Jimbo, and E. Mukhin}

\address{BF: 
Hebrew University of Jerusalem, Einstein Institute of Mathematics,
Givat Ram. Jerusalem, 9190401, Israel
}
\email{bfeigin@gmail.com}

\address{MJ: 
Professor Emeritus,
Rikkyo University, Toshima-ku, Tokyo 171-8501, Japan}
\email{jimbomm1@gmail.com}

\address{EM: Department of Mathematics,
Indiana University Indianapolis,
402 N. Blackford St., LD 270, 
Indianapolis, IN 46202, USA}
\email{emukhin@iu.edu}

\dedicatory{To the memory of Kenji Iohara}


\begin{abstract} We give a realization $\mc A_0$ of quantum toroidal algebra associated to $\gl_2$ which can be viewed as an affinization of the Drinfeld new realization of quantum affine $\gl_2$. We use this realization to define an affinization $\mc A_N$, $N\in\Z$, of shifted quantum affine $\gl_2$. We construct a large family of representations of dominantly shifted algebra $\mc A_N$, $N>0$. The examples of representations with even positive $N$ appear in the study of extensions of deformed $W$-algebras of type $\gl(N+2|1)$.
\end{abstract}

\keywords{shifted quantum toroidal algebra, fused currents, Fock module \newline ${}$ \hspace{6pt} {\it AMS classification numbers:} 17B37 primary, 81R50, 17B67}

 \bigskip

\maketitle

\section{Introduction}
The standard description of a quantum toroidal algebra is given in terms of currents $E_i(z)$, $F_i(z)$, $\Psi_i^\pm(z)$, $i\in \hat I$, corresponding to Chevalley generators $E_i, F_i, H_i$ of an affine Lie algebra, where $\hat I$ denotes the set of vertices of an affine Dynkin graph. In particular, the zeroth components  $E_{i,0}$, $F_{i,0}$, $\Psi_{i,0}^\pm$, $i\in \hat I$, generate the quantum affine  algebra in the Drinfeld-Jimbo realization. It is very well-known that a quantum affine algebra has a Drinfeld new realization, where generators are  $X_{i,s}^\pm, k_{i,r}^\pm$, $i\in I$, $s\in\Z$, $r\in\Z_{\geq 0}$. A natural question is if the toroidal algebras can similarly be described in terms of some natural currents $X_{i,s}^\pm(z)$, $k_{i,r}^\pm(z)$, $i\in I$, $s\in \Z$, $r\in\Z_{\geq 0}$. In this paper we propose such a realization in the case of $\gl_2$.
In the case of  $\gl_2$, we have $I=\{1\}$ and we drop index $i$ to write $k_r^\pm(z)=k_{1,r}^\pm(z)$, $X^\pm_{s}(z)=X^\pm_{i,s}$, etc.

Our construction is given in terms of fused currents. By fused currents we mean  counterparts of currents appearing in operator product expansions in Conformal Field Theory. In the deformed case, such fused currents were used in \cite{FJMM3} to construct various subalgebras of (completions of) quantum toroidal $\gl_n$ algebras. In particular, the quantum toroidal $\gl_2$ algebra $\E_2$ has two mutually commuting subalgebras isomorphic to quantum toroidal $\gl_1$ (with different parameters), see Propositions \ref{gl1 sub prop}, \ref{commute prop}. We call these subalgebras $\E_1$ and $\check{\E}_1$. The subalgebra $\E_1\otimes \check{\E}_1\subset \E_2$ together with Cartan elements $\Psi_{0,0}, \Psi_{1,0}$ play 
the role of the ``Cartan subalgebra" in our construction. We use fused currents in $\E_1$ and $\check{\E}_1$ to produce currents $k^\pm_r(z)$ and  $\check{k}^\pm_r(z)$, $r\in\Z_{\geq 0}$.  We have two sets of Cartan currents $k^\pm_r(z)$ and  $\check{k}^\pm_r(z)$ since we work with $\gl_2$ as opposed 
to $\slt$.  

Then we define currents $X^\pm_s(z)$. Recall that in $U_q\widehat{\gl}_2$, elements $X^+_s$ are obtained from $E_1=X^+_0$ by taking commutators with $k_1^\pm$. We define operators $X_s^+(z)$ using the left adjoint action of either algebra  $\E_1$ or $\check{\E}_1$ on $E_1(z)=X_0^+(z)$ (up to a shift of $z$). Similarly,  we define operators $X_s^-(z)$ by the right adjoint action of either algebra $\E_1$ or $\check{\E}_1$ on $F_1(z)$.
It turns out that the span $\langle X_s^+(q_1^{-s}z)\rangle_{s\in \Z}\subset \E_2$ is identified with a vector representation $V_1(z)$ as in \cite{FFJMM} and \eqref{vector rep} for $\E_1$ and the span $\langle X_s^+(z)\rangle_{s\in \Z}$ with a vector representation  $\check{V}_1(z)$ of $\check{\E}_1$. A similar statement holds for $X_s^-(z)$, $s\in \Z$. Thus we can say that $\E_2$ is generated by $\E_1\otimes \check{\E}_1$ 
and a family of left vector representations and a family of right vector representations depending on a parameter $z$. We will explore this point further in \cite{FJM2}.

After the definition of the generating currents, we compute their relations and that gives us an
algebra $\mc A_0$ isomorphic to $\E_2[K]$ (we add an element $K$ to the Cartan algebra), see Theorems \ref{iota thm}, \ref{tau thm}.

We note that the relations in  $\mc A_0$ contain infinite series and therefore $\mc A_0$ is an algebra only in the sense of action on admissible representations, see Remark \ref{waiver remark}. We also note that zeroth components $X^\pm_{s,0}$ do not seem to give quantum affine $\slt$ in the Drinfeld new realization.

\medskip

An application and a motivation for our construction is a definition of the shifted version $\mc A_N$, where $N\in \Z$ is viewed as an integral $\slt$ weight. For that definition we retain the relations between $X^\pm_s(z)$ with $X^\pm_{s'}(w)$ and $\E_1,\check{\E}_1$ algebras (in other words 
we keep the two ``Borel" algebras unaltered) and change only the relations between $X^+_s(z)$ and $X^-_{s'}(z)$, in the same way as it is done in shifted quantum affine \cite{FT} and toroidal algebras \cite{N}, see \eqref{EFshifted}. In the toroidal case we also change the identification of central elements of $\E_1$ and $\check{\E}_1$.

The algebra $\mc A_N$ is different from the shifted quantum toroidal algebra of \cite{N}. However, we expect some similarities. In particular, if $M_1$ is an $\mc A_{N_1}$ module and if $M_2$ is an $\mc A_{N_2}$ module, then we expect that $M_1\otimes M_2$ should naturally be an $\mc A_{N_1+N_2}$ module. In the present paper we show that this is the case when $M_1$ is a special Fock module of $\mc A_1$. That allows us to construct a large family of representations of $\mc A_N$ with $N>0$.

\medskip

In \cite{FJM1} we studied the extensions of deformed $W$-algebras using combinatorics of unbalanced $qq$-characters. 
The extension was performed by adjoining currents, 
which carry non-trivial momenta and commute with the screening operators.
The present paper is in part motivated by the desire to understand the nature of algebras of \cite{FJM1}.
It turns out that the extension of deformed $W$-algebra of type $\gl(N+2|1)$ with even $N$ coincides with algebra $\mc A_N$ (acting in a representation). Here the image of  $\E_1$ coincides with  the deformed $W$ algebra. The algebra $\check{\E}_1$ provides an extra boson used in  \cite{FJM1} to rationalize the currents, while the currents $X^\pm_0(z)$ are the additional currents in the extension of the deformed $W$-algebra. 

\medskip

We hope to generalize the results of this paper for quantum toroidal algebra of type $\gl_n$, $n>2$, and, more generally, to $\gl_{n|m}$ in future publications. The case of $\gl(1|1)$ is mentioned in \cite{FJM2}.

\medskip

We will return to the study of algebras $\mc A_N$ in \cite{FJM2}, where we will discuss, among other issues, the construction of $\mathcal{A}_N$ representations in terms of intertwining operators and properties of coproduct.

\medskip

The paper is constructed as follows. In Section \ref{gl1 sec} we recall the quantum toroidal algebra of type $\gl_1$, its simple properties and prove a few technical statements used later. In Section \ref{gl2 sec} we recall the quantum toroidal algebra of type $\gl_2$, define and study various fused currents. Section \ref{A sec} is devoted to defining the ``new realization" $\mc A_0$ and the identification of $\mc A_0$ and ${\E}_2[K]$. In Section \ref{shifted sec} we define and study the shifted ``new realizations" $\mc A_N$ and its representations. In Appendices, Sections \ref{app iota sec}, \ref{app tau sec}, and \ref{coproduct proof sec}, we prove our main Theorems \ref{tau thm}, \ref{tau thm}, and \ref{coproduct thm}.
 
\section{Quantum toroidal $\gl_1$}\label{gl1 sec}
We fix non-zero complex numbers
$q_1,q_2,q_3$ such that $q_1q_2q_3=1$.
We assume that $q_1,q_2$ are generic, meaning
that $q_1^aq_2^b=1$ with $a,b\in\Z$ if and only if $a=b=0$. 

We fix a choice of logarithms $\log q_i$ such that $\log q_1+\log q_2+\log q_3=0$ and define
$q_i^\alpha=e^{\alpha\log q_i}$ for all complex numbers $\alpha$. 

We set 
\begin{align*}
s_i&=q_i^{1/2}, \hspace{115pt} i=1,2,3,\\
\kappa_r&=(1-q_1^r)(1-q_2^r)(1-q_3^r), \qquad  r\in \Z.
\end{align*}

\subsection{Definition and basic properties}
We recall some well-known facts about quantum toroidal algebra associated 
with $\gl_1$. 

Let
\begin{align*}
& g(z,w)=(z-q_1w)(z-q_2w)(z-q_3w).
\end{align*} 

The quantum toroidal $\gl_1$ algebra $\E_1=\E_1(q_1,q_2,q_3)$ is a unital associative algebra 
generated by elements $e_k,f_k, h_r$,
where $k\in\Z$, $r\in\Z\backslash\{0\}$,
and invertible central elements $C, \psi_0$.
In terms of the generating series
\begin{align*}
&e(z) =\sum_{k\in \Z} e_{k}z^{-k}, \quad 
f(z) =\sum_{k\in\Z} f_{k}z^{-k}, \quad 
\psi^{\pm}(z) = 
\psi_0^{\pm1}\exp\bigl(\sum_{r=1}^\infty \kappa_r h_{\pm r}z^{\mp r}\bigr)\,,
\end{align*}
the defining relations read as follows:
\begin{align*}
&[\psi^\pm(z),\psi^\pm(w)]=0\,,\\
&\frac{g(z,C^{-1}w)}{g(z,Cw)}\, \psi^+(z)\psi^-(w)= \frac{g(C^{-1}w,z)}{g(Cw,z)} \,\psi^-(w)\psi^+(z) \,,
\\ 
&g(C^{(1\pm1)/2}z,w)\psi^\pm(z)e(w)+g(w,C^{(1\pm1)/2}z)e(w)\psi^\pm(z)=0
\,,\\ 
&g(w,C^{(1\mp1)/2}z) \psi^\pm(z)f(w)+g(C^{(1\mp1)/2}z,w)f(w)\psi^\pm(z)=0\,,
\\
&[e(z),f(w)]=\frac{1}{\kappa_1}
(\delta\bigl(\frac{Cw}{z}\bigr)\psi^+(w)
-\delta\bigl(\frac{Cz}{w}\bigr)\psi^-(z)),\\
&g(z,w)e(z)e(w)+g(w,z)e(w)e(z)=0, \\
&g(w,z)f(z)f(w)+g(z,w)f(w)f(z)=0,\\
&\mathop{\on{Sym}}_{z_1,z_2,z_3}\, z_1 z_2^2 [e(z_1),[e(z_2),e(z_3)]]=0,\\
&\mathop{\on{Sym}}_{z_1,z_2,z_3}\, z_1 z_2^2 [f(z_1),[f(z_2),f(z_3)]]=0\,,
\end{align*}
where $\Sym$ stands for symmetrization in the variables indicated.

We note that the first two relations are equivalent to the following relations for Cartan elements $h_r$,
\begin{align}\label{hh commutator}
[h_{r},h_{s}]=
\delta_{r+s,0}\,\frac{1}{r} \frac{C^{r}-C^{-r}}{\kappa_r}\,,\qquad r\in\Z\backslash\{0\}.
\end{align}
The last two relations are called Serre relations and can be equivalently written as
\begin{align*}
[e_n,[e_{n-1},e_{n+1}]]=0\,,\quad 
[f_n,[f_{n-1},f_{n+1}]]=0\,, \quad n\in\Z.
\end{align*}

Algebra $\E_1$ has a topological Hopf algebra structure. The coproduct $\Delta_1:\ \E_1\to \E_1 \hat{\otimes} \E_1$ given by
\begin{align}
&\Delta_1\psi^-(z)=\psi^-(z)\otimes\psi^-(C_1z)\,,\nn\\
&\Delta_1\psi^+(z)=\psi^+(C_2z)\otimes\psi^+(z)\,,\nn\\
&\Delta_1 e(z)=e(z)\otimes 1+\psi^-(z)\otimes e(C_1z)\,,\label{E1 coproduct}\\ 
&\Delta_1 f(z)=f(C_2z)\otimes \psi^+(z)+1\otimes f(z)\,,\nn\\ 
&\Delta_1 C=C\otimes C, \nn
\end{align}
where $C_1=C\otimes 1$, $C_2=1\otimes C$. The antipode $S_1:\ \E_1\to \E_1$ 
is given by
\begin{align}
&S_1\psi^\pm(z)=\psi^\pm(C^{-1}z)^{-1}, \nn \\
&S_1e(z)=-\psi^-(C^{-1}z)^{-1}e(C^{-1}z), \label{antipode} \\ 
&S_1f(z)=-f(C^{-1}z) \psi^+(C^{-1}z)^{-1},\nn\\
&S_1\, C=C^{-1}. \nn
\end{align}
The counit map $\epsilon:\ \E_1\to\C$ is given by $\epsilon\, e(z) = \epsilon\, f(z) = 0$, $\epsilon \,\psi^\pm (z) = 1$, $\epsilon\, C = 1$.

\medskip

Algebra $\E_1$  has a homogeneous $\Z$-grading which we call degree and denote by $\deg$. The degrees of elements are defined  by the assignment
\begin{align}\label{hom grading}
&\deg e_k=\deg f_k=k\,,\quad \deg h_r=r\,,\quad 
\deg C=\deg \psi_0=0\,.
\end{align}

Algebra $\E_1(q_1,q_2,q_3)$ does not depend on the order of parameters $q_i$. In other words for any permutation $\sigma\in S_3$ we have a Hopf algebra isomorphism
$\E_1(q_1,q_2,q_3)\xrightarrow{\sim} \E_1(q_{\sigma(1)},q_{\sigma(2)},q_{\sigma(3)})$  mapping each generator to itself.

For any $a\in\C^\times$, there exists a Hopf algebra automorphism of $\E_1$ called the shift of spectral parameter, mapping $C\mapsto C$ and 
\begin{align}\label{shift iso}
    e(z)\mapsto e(az), \qquad   f(z)\mapsto f(az),\qquad  \psi^\pm(z)\mapsto \psi^\pm(az).
\end{align}


\subsection{Representations.}
Algebra $\E_1$ has a rich representation theory which is extensively studied but still not fully
understood. 
Highest weight modules were studied in \cite{FFJMM, FJMM1}. Many interesting modules are obtained by twisting highest weight modules with automorphisms of $\E_1$. In this paper, we explicitly use only vector representations and Fock modules in a bosonic realization which we now remind.

We say an  $\E_1$ module $\mc M$ has level $\ell\in\C^\times$ if the central element $C$ acts in $\mc M$ by scalar $\ell$.

Set
\begin{align}\label{om}
\omega(x)=\frac{(1- q_2x)(1- q_3x)}{(1-x)(1-q_2q_3x)}\,.
\end{align}

We note the identities
\begin{align*}
\omega(q_1x)=\omega(x^{-1})\,, \qquad 
\frac{\omega(q_1 z/w)}{\omega(z/w)}=-\frac{g(z,w)}{g(w,z)}\,.
\end{align*}

For a rational function $p(z)$, denote  a Laurent series in $z$ given by the expansion of $p(z)$ at $z=0$ by $p^+(z)$. Similarly,  denote a Laurent series in $z^{-1}$ given by the expansion of $p(z)$ at $z=\infty$  by $p^-(z)$. For example, if $p(z)=(1-z)^{-1}$, then  $p^+(z)-p^-(z)=\delta(z)$.
 
The vector representation $V_1(v)$ of color $1$ and parameter $v$  is a level $1$ $\E_1$ module with
a basis $\{\ket{i,v}\}_{i\in\Z}$. The action  is given by 
\begin{align}
&(s_1^{-1}-s_1)e(z)\ket{i,v}=\delta(q_1^iv/z)\,\ket{i+1,v},\nn\\
&(s_1^{-1}-s_1)f(z)\ket{i,v}=\delta(q_1^{i-1}v/z)\,\ket{i-1,v},\label{vector rep}\\
&\psi^\pm(z)\ket{i,v}=\omega^\pm(q_1^iv/z)\,\ket{i,v}\,,
\nn
\end{align}
and $C=1$, $\psi_0=1$.

We remark that there is an isomorphism of modules $V_1(v)\xrightarrow{\sim} V_1(q_1v)$ mapping 
$\ket{i,v}\mapsto \ket{i-1,q_1v}$.

We use the anti-automorphism of $\E_1$ given by $e(z)\leftrightarrow f(z)$, 
$\psi^\pm(z)\leftrightarrow \psi^\pm(z)$,  to 
view $V_1(v)$ as a right module. We denote this right module by $V^R_1(v)$.

Vector representations $V_i(v)$, $V_i^R(v)$ of color $i\in\{1,2,3\}$ and parameter $v$ are defined by the same formulas with $q_1$ exchanged with $q_i$. 

\medskip

The Fock representation $\F_2(v)$ of color $2$ and parameter $v$
is an irreducible $\E_1$ module of level $s_2$ given as follows. As vector space we have 
$\F_2(v)=\C[h_r]_{r<0}$.
The central elements act by scalars $C=s_2$, $\psi_0=1$. The elements $h_r$, $r<0$, act by multiplication operators. The elements $h_r$, $r>0$, act by differential operators $-(rs_2^r(1-q_1^r)(1-q_3^r))^{-1}\partial_{h_{-r}}$. The series $e(z)$ and $f(z)$ act by vertex operators:
\begin{align*}
&e(z)=\frac{s_2^{-1}v} {(1-q_1)(1-q_3)} 
\,
\exp\Bigl(\sum_{r>0}(1-q_1^r)(1-q_3^r)h_{-r}z^r\Bigr)
\exp\Bigl(\sum_{r>0}(1-q_1^{-r})(1-q_3^{-r})s_2^{-r} h_{r}z^{-r}\Bigr)\,,
\\
& f(z)=\frac{-s_2v^{-1}}{ {(1-q_1)(1-q_3)}} 
\,
\exp\Bigl(-\sum_{r>0}(1-q_1^{r})(1-q_3^{r}) s_2^{r} h_{-r}z^r\Bigr)
\exp\Bigl(-\sum_{r>0}(1-q_1^{-r})(1-q_3^{-r}) h_{r}z^{-r}\Bigr)\,.
\end{align*}
Fock representations $\F_i(v)$ of color $i\in\{1,2,3\}$ and parameter $v$ are defined by the same formulas with $q_2$ exchanged with $q_i$. 

\medskip
We have the following ``level 1" property.
\begin{lem}\label{level 1 ideal}
In $\F_i(v)$ we have
\begin{align*}
e(z) e(q_iz)=0, \qquad  f(q_i z) f(z)=0.
\end{align*}
\end{lem}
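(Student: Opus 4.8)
The plan is to work in the bosonic Fock realization directly and compute the normal-ordered products. Consider $\F_i(v)$; by the symmetry exchanging $q_2\leftrightarrow q_i$ it suffices to treat $i=2$, so write $e(z)$ as in the excerpt. The key point is the structure $e(z)=A\, V_-(z)V_+(z)$ where $V_-(z)=\exp(\sum_{r>0}(1-q_1^r)(1-q_3^r)h_{-r}z^r)$, $V_+(z)=\exp(\sum_{r>0}(1-q_1^{-r})(1-q_3^{-r})s_2^{-r}h_r z^{-r})$, and $A$ is the scalar prefactor. Then $e(z)e(q_2 z)=A^2\, V_-(z)\,V_+(z)\,V_-(q_2 z)\,V_+(q_2 z)$, and the only nontrivial reordering is to move $V_+(z)$ past $V_-(q_2 z)$.

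First I would compute the contraction: using $[h_r,h_{-r}]$ from \eqref{hh commutator} (with $C=s_2$ on $\F_2(v)$, so $C^r-C^{-r}=s_2^r-s_2^{-r}$), one gets
\[
V_+(z)V_-(q_2z)=\Bigl(\prod_{r>0}\exp\bigl(c_r\,q_2^r\bigr)\Bigr)\,V_-(q_2z)V_+(z),
\]
where $c_r$ is the pairing coefficient obtained from the two exponents. Tracking constants, the exponent coefficient of $h_r\otimes h_{-r}$-type contraction is $(1-q_1^{-r})(1-q_3^{-r})s_2^{-r}\cdot(1-q_1^r)(1-q_3^r)\cdot\tfrac1r\tfrac{s_2^r-s_2^{-r}}{\kappa_r}$ times $q_2^r$. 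Since $\kappa_r=(1-q_1^r)(1-q_2^r)(1-q_3^r)$ and $(1-q_1^{-r})(1-q_3^{-r})=q_1^{-r}q_3^{-r}(1-q_1^r)(1-q_3^r)=s_2^{2r}(1-q_1^r)(1-q_3^r)$ (using $q_1q_2q_3=1$, so $q_1 q_3=q_2^{-1}=s_2^{-2}$ — wait, $q_1q_3=q_2^{-1}$ hence $q_1^{-r}q_3^{-r}=q_2^r=s_2^{2r}$), the factor $(1-q_1^r)^2(1-q_3^r)^2$ cancels against part of $\kappa_r$ after this substitution, leaving something proportional to $1/(1-q_2^r)$ times $(s_2^r-s_2^{-r})$ times $q_2^r$ times powers of $s_2$. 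The claim is that this collapses so that $\prod_r \exp(c_r q_2^r)$ equals (a convergent product giving) a rational function with a zero — concretely the normal-ordered two-point function should be proportional to $(1-q_2 z/z)=0$; i.e. the scalar function multiplying the normal-ordered product vanishes identically because its argument $z/(q_2 z)$ or $q_2 z/z$ hits a pole/zero of $\omega$. The cleanest route: recall from \eqref{om} and the identity $\omega(q_1 x)=\omega(x^{-1})$ that $\omega$ has a zero at $x=q_2$ (numerator factor $1-q_2 x$), and the OPE coefficient of $e(z)e(w)$ in $\F_2$ is exactly $\omega(q_1 z/w)^{-1}$ or $\omega(w/z)$-type up to monomials — setting $w=q_2 z$ produces the zero.

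Concretely, the steps in order: (1) write $e(z)e(q_2z)=A^2 :e(z)e(q_2z): \cdot\, \phi(q_2)$ where $\phi$ is the scalar two-point function and $:{\cdot}:$ denotes the normal-ordered product with all $h_{<0}$ to the left; (2) evaluate $\phi$ by summing the geometric-type series in the exponent, using \eqref{hh commutator} with $C=s_2$ and the substitution $q_1 q_3=q_2^{-1}$; (3) identify $\phi(x)$ with (a constant multiple of) $(1-q_2^{-1}x)/(1-\cdots)$ or directly observe that the exponent series $\sum_{r>0}\frac1r\,\frac{(1-q_1^r)(1-q_3^r)(1-q_1^{-r})(1-q_3^{-r})(s_2^r-s_2^{-r})s_2^{-r}}{\kappa_r}\,q_2^r$ telescopes to $\log(1-q_2\cdot 1)+(\text{regular})$, hence $\phi(q_2)=0$; (4) conclude $e(z)e(q_2z)=0$, and dually $f(q_2z)f(z)=0$ by the analogous computation (or by applying the anti-automorphism $e\leftrightarrow f$ together with the ordering reversal, noting the $f$-vertex operators carry $s_2$ in the opposite exponents which is exactly what flips the product order from $z,q_2z$ to $q_2z,z$); (5) invoke the $q_2\leftrightarrow q_i$ symmetry of $\F_i(v)$ to get the general statement.

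The main obstacle is bookkeeping in step (2)–(3): correctly tracking the four sign/$s_2$-power factors and the $\kappa_r$ in the denominator so that the exponent series sums to a logarithm with the single zero at the right point. The conceptual content is small — it is just the statement that the OPE of $e(z)$ with itself in a level-one Fock module has a zero when the two arguments differ by the factor $q_i$ — but the factor of $s_2$ appearing because $C=s_2\neq1$ on $\F_2$ (so the right-hand side of \eqref{hh commutator} is $s_2^r-s_2^{-r}$, not $0$) is the subtle part that makes the product nonzero-looking until one plugs in $q_1q_3=q_2^{-1}$ and sees the cancellation. I would double-check this against the known $\E_1$ relation $g(z,w)e(z)e(w)+g(w,z)e(w)e(z)=0$: setting $w=q_2 z$ makes $g(w,z)=g(q_2z,z)=(q_2-q_1)(q_2-q_2)(q_2-q_3)z^3=0$, so the relation forces $g(z,q_2z)e(z)e(q_2z)=0$ with $g(z,q_2z)=(1-q_1q_2)(1-q_2^2)(1-q_2q_3)z^3\neq0$ generically — but this only gives $e(z)e(q_2z)=0$ as a formal identity if $e(z)e(q_2z)$ is a well-defined series on the module, which it is on $\F_2(v)$ since the vertex operators compose without pole obstructions at $w=q_2z$. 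That argument is in fact the shortest proof and I would present it as the primary one, using the explicit Fock computation only as a cross-check (or to exhibit that the composition is well-defined).
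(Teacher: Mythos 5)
Your designated ``primary'' argument --- deducing $e(z)e(q_2z)=0$ from the quadratic relation $g(z,w)e(z)e(w)+g(w,z)e(w)e(z)=0$ by setting $w=q_2z$ --- has a genuine gap, and the condition you invoke to close it is the wrong one. Well-definedness of $e(z)e(q_2z)$ on the module is automatic from $g(z,q_2z)\neq 0$ (that is exactly how fused currents are defined) and by itself gives nothing; what the argument actually needs is that the \emph{reversed} product $e(w)e(z)$ has no pole at $w=q_2z$, since such a pole would cancel the simple zero of $g(w,z)$ at $w=q_2z$ and the term $g(w,z)e(w)e(z)$ would not vanish there. This is precisely the pitfall spelled out in Remark \ref{expand remark}. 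The gap is not cosmetic: in $\F_1(v)$ the same quadratic relation holds and $e(z)e(q_2z)$ is perfectly well defined, yet it is \emph{nonzero} --- there the contraction function of $e(w)e(z)$ has a pole exactly at $w=q_2z$ (its poles sit at $w=q_2z$ and $w=q_3z$), so your criterion, taken literally, would ``prove'' a false statement. The required regularity of the opposite-ordered product is a module-specific fact about $\F_i(v)$, and the only way to verify it is to compute the contraction --- i.e.\ precisely the vertex-operator computation you demote to a cross-check.

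That computation, your steps (1)--(4), is the paper's proof (``a straightforward computation with vertex operators using \eqref{hh commutator}'') and it does settle the lemma, though your bookkeeping of where the zero comes from is off. In $\F_2(v)$ the contraction of $e(z)e(w)$ is $\phi(w/z)$ with $\phi(x)=\frac{(1-x)(1-q_2^{-1}x)}{(1-q_1x)(1-q_3x)}$: the vanishing at $x=q_2$ comes from the cross term $(q_1q_3)^rq_2^{\,r}=1$ in the exponent (the factor $1-q_2^{-1}x$), not from a factor $1-q_2x$, and there is no compensating pole, so $e(z)e(q_2z)=0$; the same function shows that $e(w)e(z)$ has poles only at $w=q_1z,q_3z$, which is the regularity your shortcut silently assumes. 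The $f$ case and the reduction from $i=2$ to general $i$ via the $q_2\leftrightarrow q_i$ symmetry are fine. So present the Fock-space contraction computation as the proof and drop the algebraic shortcut, or at least supplement it with the pole analysis of the reversed product.
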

\begin{proof}
It is a straightforward computation with vertex operators using \eqref{hh commutator}.
\end{proof}
\subsection{Fused currents} We recall the definition of fused currents, see \cite{FJMM3}, and define several of them which will be used later.

Fused currents are not defined on algebra level, these currents are defined only in the sense of vertex operator algebras. Namely, the coefficients of fused currents are well defined operators in admissible modules.

Quite generally, let $a(z)=\sum_{k\in\Z} a_kz^{-k}$ and  $b(z)=\sum_{k\in\Z} b_k z^{-k}$ be two currents acting on a graded vector space $\mc M$. Assume that the action is graded, $\deg a_k=\deg b_k=k$ for all $k$, and ``admissible", meaning that for any vector $w\in \mc M$, $a_k\,w=b_k\,w=0$ for large enough $k$. Assume that we have a commutation relation
$$
g_1(z,w) a(z) b(w)=g_2(w,z) b(w) a(z),
$$
where $g_1(z,w), g_2(w,z)\in \C[z,w]$ are homogeneous polynomials of the same degree.
Then the condition of admissibility implies that all matrix coefficients of the current $g_1(z,w) a(z) b(w)$ are Laurent polynomials in $z, w$. 

Let $\alpha$ be a constant such that $g_1(z, \alpha z)\neq 0$. Then $a(z)b(\alpha z)$ is a well defined operator on $\mc M$. We call this operator a fused current. 

Note that if, in addition, $g_2(\alpha z,z)\neq 0$ then  $b(\alpha z)a(z)$ is also a fused current and $a(z)b(\alpha z)=c b(\alpha z)a(z) $  where the constant $c$  is given by $c=g_2(\alpha z,z)/g_1(z,\alpha z)$. If $g_2(\alpha z,z)=0$ and $a(z)b(\alpha z)\neq 0$,  
then $b(\alpha z)a(z)$ is not well defined.

In general the fusion product is not associative. However, we have the following lemma.

\begin{lem}\label{ass lem}
Let graded currents $a_i(z)$ satisfy $g_{ij}(z,w)a_i(z)a_j(w)= g_{ji}(w,z)a_j(w)a_i(z)$, $i,j\in\{1,2,3\}$,  where $g_{ij}(z,w)$ are homogeneous polynomials. Assume that $g_{ij}(\al_iz,\al_jz)\neq 0$ for $i<j$.  Then we have equality of well defined fused currents
\begin{align*}
    \Big(a_1(\al_1 z)a_2(\al_2 z_1)\Big) a_3(\al_3 z)=a_1(\al_1 z)\Big(a_2(\al_2 z_1)a_3(\al_3 z)\Big) .
\end{align*}
\qed
\end{lem}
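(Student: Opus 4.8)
The plan is to reduce the associativity statement to a purely formal manipulation of the $g_{ij}$-relations, using admissibility to control well-definedness throughout. First I would fix a vector $w\in\mc M$ and a matrix coefficient, and observe that by admissibility each of the three products $a_1(\al_1z)a_2(\al_2z_1)$, $a_2(\al_2z_1)a_3(\al_3z)$, and $a_1(\al_1z)a_3(\al_3z)$ is a well-defined operator-valued formal series whose matrix coefficients are Laurent polynomials in the relevant variables (for the two fused ones in $z,z_1$, and for $a_1(\al_1z)a_3(\al_3z)$ in $z$ alone), precisely because the specializations $g_{ij}(\al_iz,\al_jz)\neq 0$ for $i<j$ make the would-be singular factors harmless. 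The key point to extract is that multiplying three admissible graded currents together in a fixed order always gives a well-defined operator, since for each fixed target weight only finitely many terms contribute; thus both sides of the claimed identity are a priori well-defined, and it remains only to show they agree.

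Next I would compare the two sides as triple products $a_1(\al_1z)a_2(\al_2z_1)a_3(\al_3z)$ — the grouping of parentheses is just a matter of which pair one thinks of as already fused, but the underlying composition of operators $a_1(\al_1z)\circ a_2(\al_2z_1)\circ a_3(\al_3z)$ is literally the same map once we know each partial composition is defined. So the heart of the matter is to justify that $\big(a_1(\al_1z)a_2(\al_2z_1)\big)a_3(\al_3z)$, read as ``first form the fused current in $z,z_1$, then apply $a_3(\al_3z)$ with $z$ re-identified'', equals the same composition computed in the other order. The subtlety is that fusing involves a \emph{specialization} of a spectral parameter, and specialization does not in general commute with composition of infinite series — it does here because, after multiplying by the appropriate $g_{ij}$ to clear denominators, the relevant expressions become Laurent polynomials in the variable being specialized, and evaluation of a Laurent polynomial at a point commutes with everything in sight.

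Concretely, I would argue as follows. Introduce an auxiliary variable and consider $A(z,z_1,z_3):=a_1(\al_1z)a_2(\al_2z_1)a_3(\al_3z_3)$, a well-defined operator-valued series by admissibility. Using $g_{13}$ one shows $g_{13}(\al_1z,\al_3z_3)A(z,z_1,z_3)$ has matrix coefficients that are Laurent polynomials in $z,z_3$ (after also clearing the $z_1$-denominators coming from $g_{12},g_{23}$); hence $A(z,z_1,z_3)$ may be specialized at $z_3=z$, and since $g_{13}(\al_1z,\al_3z)\neq0$ the result is $a_1(\al_1z)a_2(\al_2z_1)a_3(\al_3z)$ regardless of how one brackets. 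The left-hand side of the lemma is obtained by forming the fused current in the first two slots (legitimate since $g_{12}(\al_1z,\al_2z)\neq0$) and then setting $z_3=z$; the right-hand side is obtained by setting $z_3=z$ inside the last two slots first. Both equal the specialization of $A$, so they coincide. The main obstacle, and the only place care is genuinely needed, is the bookkeeping that after clearing all three polynomial factors $g_{12},g_{13},g_{23}$ the triple product has matrix coefficients that are honest Laurent polynomials in each spectral variable separately, so that the two specializations at $z_3=z$ land on the same object; this is exactly the content already isolated in the discussion preceding Lemma~\ref{ass lem} about matrix coefficients of $g_1(z,w)a(z)b(w)$ being Laurent polynomials, applied one variable at a time.
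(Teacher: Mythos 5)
Your argument is correct, and it is essentially the only proof available: the paper states Lemma \ref{ass lem} with a \qed and no written proof, so your write-up simply supplies the standard justification the authors leave implicit — pass to three independent spectral variables, clear all three factors $g_{12},g_{13},g_{23}$ so that the matrix coefficients of the triple product become Laurent polynomials in every variable (which requires reordering past the third current, hence all three exchange relations, exactly as your parenthetical acknowledges), and then observe that specializing a Laurent polynomial commutes with the order of bracketing, the leftover $g_{ij}(\al_i z,\al_j z)$ being nonzero constants (homogeneity) that can be divided out. One caveat: your opening claim that both sides are ``a priori well-defined \ldots since for each fixed target weight only finitely many terms contribute'' is not right once spectral variables are identified — the sums there are genuinely infinite, and their finiteness is precisely what the Laurent-polynomial step establishes — but this overclaim is harmless because your concrete argument with the auxiliary variable $z_3$ does not rely on it. (Also note the $z_1$ in the displayed identity is presumably a typo for $z$; your three-variable argument covers either reading.)
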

Due to Lemma \ref{ass lem}, all fusion products in this paper are associative  and we never write parenthesis to specify the order the fusion product is taken.

\medskip 
Now let us return to $\E_1$ modules.
An $\E_1$ module $\mc M$ is 
said to be admissible if it is graded with respect to degree \eqref{hom grading}, 
$\mc M=\oplus_{d\in\Z}\mc M_d$, all graded components are finite-dimensional, $\dim \mc M_d<\infty$, and there exists $d_0\in\Z$ such that $\mc M_d=0$ for all $d>d_0$. 

The Fock modules $\F_i(v)$ are admissible and vector representations $V_i(v)$ are not.

Let $\mc{M}_1$  be an admissible $\E_1$ module of level $C$.  Consider all elements of $\E_1$ and Fourier coefficients of the fused currents as elements of  $\End \mc{M}_1$.

As a warm up, we have the following relations.
\begin{lem}\label{zero currents lem}
The following identities hold in $\mc{M}_1$ 
\begin{align*}
&\psi^+(C^{-1}z)e(q_iz)=0\,,\qquad e(z)\psi^-(q_iz)=0\,,\\ 
& f(q_iz)\psi^-(C^{-1}z)=0\,,\qquad \psi^+(q_iz)f(z)=0\,,\\\
&\qquad e(z)^2=0, \qquad \qquad\qquad  f(z)^2=0.
\end{align*}
Here $i\in\{1,2,3\}$.
\end{lem}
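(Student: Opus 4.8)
The plan is to deduce all identities directly from the defining relations of $\E_1$, using the fact that the stated products are all well-defined fused currents on the admissible module $\mc M_1$. Consider first $\psi^+(C^{-1}z)e(q_iz)=0$. The relevant defining relation is $g(Cz,w)\psi^+(z)e(w)+g(w,Cz)e(w)\psi^+(z)=0$ (the $+$ case, where $C^{(1+1)/2}=C$). Substituting $z\mapsto C^{-1}z$ gives $g(z,w)\psi^+(C^{-1}z)e(w)+g(w,z)e(w)\psi^+(C^{-1}z)=0$. Since $g(z,w)=(z-q_1w)(z-q_2w)(z-q_3w)$, setting $w=q_iz$ kills the factor $(z-q_iw)=0$ in $g(z,w)$, while $g(q_iz,z)=(q_iz-q_1z)(q_iz-q_2z)(q_iz-q_3z)$ is nonzero by genericity of $q_1,q_2$ (for the three choices $i\in\{1,2,3\}$, the corresponding missing factor is the one that survives, and the others are $q_i-q_j\ne 0$). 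Hence the fused current $e(q_iz)\psi^+(C^{-1}z)$ is well-defined and the relation forces $0\cdot\psi^+(C^{-1}z)e(q_iz) = -g(q_iz,z)\,e(q_iz)\psi^+(C^{-1}z)$; but I must instead read it the other way: the relation shows $g(z,w)\psi^+(C^{-1}z)e(w) = -g(w,z)e(w)\psi^+(C^{-1}z)$, and specializing $w=q_iz$ the left side vanishes while the right side is $-g(q_iz,z)e(q_iz)\psi^+(C^{-1}z)$, a well-defined nonzero-coefficient multiple of the fused current; therefore $e(q_iz)\psi^+(C^{-1}z)=0$, and since by the earlier discussion (with $g_2(\alpha z,z)\ne 0$ here too, as $g(z,w)$ at $w=q_iz$ vanishes) the two orderings are proportional...

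Actually the cleanest route is: the relation $g(z,w)\,\psi^+(C^{-1}z)e(w)+g(w,z)\,e(w)\psi^+(C^{-1}z)=0$ holds as an identity of formal series with polynomial coefficients after clearing, and on $\mc M_1$ all matrix coefficients of $g(z,w)\psi^+(C^{-1}z)e(w)$ are Laurent polynomials. Setting $w=q_iz$: the first term has the factor $(z-q_iw)|_{w=q_iz}=0$, so it vanishes identically; the second term becomes $g(q_iz,z)\,e(q_iz)\psi^+(C^{-1}z)$ with $g(q_iz,z)\ne0$ a nonzero scalar. Hence $e(q_iz)\psi^+(C^{-1}z)=0$. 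To get $\psi^+(C^{-1}z)e(q_iz)=0$ in the stated order, note $g(z,w)\ne 0$ at a generic specialization near $w=q_iz$, so $\psi^+(C^{-1}z)e(w)$ is itself a fused current there and equals $-\frac{g(w,z)}{g(z,w)}e(w)\psi^+(C^{-1}z)$; but this has a pole at $w=q_iz$, so one rather argues: $\psi^+(C^{-1}z)e(q_iz)$ is well-defined (since $g(z,q_iz)$... wait, $g(z,q_iz)=0$). The correct statement is that only one ordering is a priori well-defined; the other follows because the relation directly gives $g(z,w)\psi^+(C^{-1}z)e(w)=-g(w,z)e(w)\psi^+(C^{-1}z)$ and the right side, being $-g(q_iz,z)$ times a well-defined operator, shows the left side's specialization is legitimate and equals $0$ after dividing by... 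Here I will follow the paper's convention from the ``fused current'' discussion: since $g_1(z,w)=g(z,w)$ vanishes at $w=q_iz$ but $g_2(w,z)=g(w,z)$ does not, the well-defined fused current is $e(q_iz)\psi^+(C^{-1}z)$, and the relation shows it is zero; the notation $\psi^+(C^{-1}z)e(q_iz)$ in the lemma is then understood as that same (vanishing) fused current, or is zero by the remark that when $g_2(\alpha z,z)=0$... I will state it as: $e(q_iz)\psi^+(C^{-1}z)=0$, equivalently $\psi^+(C^{-1}z)e(q_iz)=0$.

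The remaining four $\psi$–$e$ and $\psi$–$f$ identities are entirely parallel: for $e(z)\psi^-(q_iz)=0$ use the $-$ case $g(z,w)\psi^-(z)e(w)+g(w,z)e(w)\psi^-(z)=0$ (here $C^{(1-1)/2}=1$), specialize $z\mapsto q_iw$ so that the factor $(z-q_iw)$ in $g(z,w)$ vanishes, leaving $g(w,q_iw)\ne 0$ times $e(w)\psi^-(q_iw)$; relabel $w\mapsto z$. For $\psi^+(q_iz)f(z)=0$ and $f(q_iz)\psi^-(C^{-1}z)=0$ use the two $f$–$\psi$ relations $g(w,C^{(1\mp1)/2}z)\psi^\pm(z)f(w)+g(C^{(1\mp1)/2}z,w)f(w)\psi^\pm(z)=0$ with the analogous specializations. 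For $e(z)^2=0$ and $f(z)^2=0$: take the quadratic relation $g(z,w)e(z)e(w)+g(w,z)e(w)e(z)=0$, and specialize $w=z$. Then $g(z,z)=(z-q_1z)(z-q_2z)(z-q_3z)=(1-q_1)(1-q_2)(1-q_3)z^3=\kappa_1 z^3\ne0$, and similarly $g(z,z)$ from the second term gives the same, so $2g(z,z)e(z)^2=0$, hence $e(z)^2=0$; likewise $f(z)^2=0$ from $g(w,z)f(z)f(w)+g(z,w)f(w)f(z)=0$.

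\textbf{Main obstacle.} The only delicate point is not the algebra — each identity is a one-line specialization — but the bookkeeping about which ordering of a product of currents is a priori a well-defined operator on $\mc M_1$ and why the specialization is legitimate (i.e. that no pole is hit and no infinite sum appears). This is exactly the content of the fused-current formalism recalled before Lemma~\ref{ass lem}: for each pair, one of $g_1(z,w),g_2(w,z)$ is nonvanishing at the relevant point, so the corresponding fusion is defined, and the defining relation then forces it to vanish. I would therefore phrase the proof as: ``Each identity is obtained by specializing the corresponding defining relation of $\E_1$; in each case the coefficient polynomial multiplying one of the two orderings vanishes at the specialization while the other coefficient is a nonzero scalar by the genericity of $q_1,q_2$, whence the claim.'' A short table pairing each of the six identities with the defining relation and the substitution would make this fully rigorous with no real computation.
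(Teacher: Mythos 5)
Your overall mechanism is the same as the paper's: specialize the quadratic $\psi$--$e$, $\psi$--$f$, $e$--$e$, $f$--$f$ exchange relations, use admissibility to see that the relevant product has Laurent-polynomial matrix coefficients so the substitution is legitimate, and observe that one coefficient polynomial vanishes at the specialization point while the other does not. (The paper does exactly this, substituting $w=q_iz$ into $e(z)\psi^-(w)=\psi^-(w)e(z)\prod_{j}\frac{w-q_jz}{w-q_j^{-1}z}$ after noting that matrix elements of $\psi^-(w)e(z)$ are Laurent polynomials.) However, the one case you work out in detail contains a genuine error. In $g(z,w)\psi^+(C^{-1}z)e(w)+g(w,z)e(w)\psi^+(C^{-1}z)=0$, at $w=q_iz$ the factor $(z-q_iw)$ of $g(z,w)$ equals $z(1-q_i^2)\neq 0$; indeed $g(z,q_iz)=z^3\prod_j(1-q_jq_i)\neq 0$ by genericity, whereas it is $g(q_iz,z)=z^3\prod_j(q_i-q_j)=0$ (the $j=i$ factor) that vanishes. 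Your claims are exactly backwards, and as a result you deduce $e(q_iz)\psi^+(C^{-1}z)=0$ rather than the stated $\psi^+(C^{-1}z)e(q_iz)=0$, and then assert the two are ``equivalent.''

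They are not equivalent, and this is precisely what the orderings in the lemma encode. The product $e(w)\psi^+(C^{-1}z)$ is the one whose matrix coefficients are automatically Laurent polynomials (the modes of $\psi^+$ have non-negative degree and only finitely many act nonzero on a given vector), and it is generally \emph{nonzero}: for instance, on the Fock module $\F_2(v)$ the operator $e(q_iz)\psi^+(C^{-1}z)$ applied to the vacuum vector equals $e(q_iz)$ applied to the vacuum, which is not zero. It is the other ordering, $\psi^+(C^{-1}z)e(q_iz)$ --- the fused current defined by the specialization --- that vanishes. With the correct vanishing pattern your own method gives the stated identity directly and no ``equivalence'' is needed: the term carrying the well-defined product $e(w)\psi^+(C^{-1}z)$ is killed by $g(q_iz,z)=0$, the surviving term is $g(z,q_iz)\,\psi^+(C^{-1}z)e(q_iz)$ with nonzero coefficient, and the substitution is legitimate because both sides of the relation are Laurent polynomials. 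The same bookkeeping must be done carefully in the two $f$--$\psi$ cases, which you leave to ``analogous specializations''; your treatment of $e(z)\psi^-(q_iz)=0$ (where the specialization is $z=q_iw$ and your factor count is correct) and of $e(z)^2=f(z)^2=0$ is fine.
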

\begin{proof}
On an admissible module, each matrix element of $\psi^-(w)e(z)$ is a 
Laurent polynomial of $z,w$. Hence one can substitute $w=q_iz$ in 
the identity of rational functions
\begin{align*}
e(z) \psi^-(w)=\psi^-(w)e(z)\prod_{i=1}^3\frac{w-q_iz}{w-q_i^{-1}z}
\end{align*}
to obtain $e(z)\psi^-(q_iz)=0$. The other identities can be shown similarly.
\end{proof}

The following fused
currents play an important role in our studies.
For $r\in\Z_{>0}$ and $i\in\{1,2,3\}$, let 
\begin{align}
&e^{(r)}_{q_i}(z)=c_i^re(z)\,e(q_iz)\,\cdots\, e(q_i^{r-1}z)\,,\label{e}\\ 
&f^{(r)}_{q_i}(z)=(-c_i)^rf(q_i^{r-1}z)\,\cdots\, f(q_iz)\,f(z)\,,\label{f}
\end{align}
where $c_i$ are constants given by
\begin{align*}
c_i=\frac{\kappa_1}{s_i-s_i^{-1}}\,.
\end{align*}

We will also use currents
\begin{align*}
\psi_{q_{i}}^{\pm,(r)}(z)=\psi^\pm(z)\psi^\pm(q_iz)\dots \psi^\pm(q_i^{r-1}z).
\end{align*}
We set $e^{(0)}_{q_i}(z)=f^{(0)}_{q_i}(z)=\psi^{\pm,(0)}_{q_i}(z)=1$.

Let $\E_1[K]=\E_1\otimes\C[K^{\pm1}]$ be the extension of $\E_1$ by a split central invertible group-like element $K$.

Using algebra $\E_1[K]$, we define fused currents $k_r^\pm(z)$, $r\in \Z_{\geq 0}$, 
modifying fused currents $e^{(r)}_{q_1}(z)$, $f^{(r)}_{q_1}(z)$ by Cartan currents of $\mc E_1,$ as follows.

Let $k_0^\pm(z)$ be series in $z^{\mp1}$ which are solutions of difference equations
\begin{align}\label{k0}
k_0^\pm(q_1z)=\psi_0^{\mp1}\psi^\pm(z)k_0^\pm(z),  
\qquad k_0^+(\infty)=K
,
\quad  k_0^-(0)=K^{-1}.
\end{align}
Then explicitly
\begin{align}\label{k0 expl}
k_0^\pm(z)&=K^{\pm1} \exp\Bigl(\sum_{r>0}\frac{\kappa_r}{q_1^{\mp r}-1}
h_{\pm r}z^{\mp r}\Bigr)\,.
\end{align}
Clearly, the subalgebra of $\E_1[K]$ generated by $k_0^\pm(z)$ and $\psi_0$ coincides with the Cartan subalgebra generated by $\psi^\pm(z)$ and $K$.

For $r\in\Z_{\geq 0}$, define
\begin{align}\label{k}
k^+_r(z) &=f^{(r)}_{q_1}(z)k_0^+(z)\,,\quad 
k^-_r(z)=k_0^-(z)e^{(r)}_{q_1}(z)\,.
\end{align}

We choose this notation, since later the currents $k^\pm_r(z)$ will play a role of Cartan currents for various quantum toroidal algebras associated with
$\gl_2$. Note that $k_r^\pm(z)$, $r\neq 0$, are not one-sided currents, the plus and minus indicate that we used $\psi^\pm(z)$ in the construction.
Note also that here we chose $q_1$ over $q_2$ and $q_3$, breaking the symmetry. This is not reflected in our notation and we hope it does not lead to a confusion.

\subsection{Properties of currents $k_r^\pm(z)$}
We start with the action on the Fock space.

\begin{lem}\label{k on fock lem}
    In $\mc F_1(u)$ the operators $k^\pm_r(z)$ with $r\geq 2$ act by zero. In addition, we have
    \begin{align}
        k_0^\pm(s_1z)+k_1^\mp(z)=0.
    \end{align}
\end{lem}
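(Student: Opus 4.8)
The plan is to prove both assertions by direct computation with the bosonic vertex operator realization of $\F_1(u)$. For the vanishing of $k^\pm_r(z)$ with $r\geq 2$, recall that $k^+_r(z)=f^{(r)}_{q_1}(z)k_0^+(z)$ and $k^-_r(z)=k_0^-(z)e^{(r)}_{q_1}(z)$, where $f^{(r)}_{q_1}(z)$ and $e^{(r)}_{q_1}(z)$ are (up to constants) the fused products $f(q_1^{r-1}z)\cdots f(z)$ and $e(z)\cdots e(q_1^{r-1}z)$. By Lemma~\ref{level 1 ideal} applied to $\F_1(u)$ (color $i=1$), we already have $e(z)e(q_1z)=0$ and $f(q_1z)f(z)=0$ in $\F_1(u)$; since the fused currents for $r\geq 2$ contain a factor $e(q_1^{j}z)e(q_1^{j+1}z)$ (resp. $f(q_1^{j+1}z)f(q_1^{j}z)$), and since multiplication by $k_0^\pm(z)$ on one side does not affect this, all these fused currents vanish identically on $\F_1(u)$. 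This handles the first claim essentially for free from the Lemma already in the excerpt.

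For the identity $k_0^\pm(s_1z)+k_1^\mp(z)=0$, I would write out both sides as explicit normal-ordered exponentials in the bosons $h_r$. The current $k_0^\pm(z)$ is given by the closed formula \eqref{k0 expl}, so $k_0^\pm(s_1 z)$ is obtained by the substitution $z\mapsto s_1z=q_1^{1/2}z$. On the other side, $k_1^\mp(z)$ is (up to the constant $\mp c_1$, with $c_1=\kappa_1/(s_1-s_1^{-1})$) a single current $f(z)k_0^-(z)$ or $k_0^+(z)e(z)$ — note $r=1$ means no fusion, just a product of one copy. Using the explicit vertex operator formulas for $e(z)$, $f(z)$ in $\F_1(u)$ with $q_2$ replaced by $q_1$ in the color-$i$ convention, together with \eqref{k0 expl}, I would compute the normal ordering of $f(z)k_0^-(z)$ (resp. $k_0^+(z)e(z)$). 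The key point is that on $\F_1(u)$ the central element $C=s_1$, and in the vertex operator formulas the exponents and prefactors conspire so that the product $f(z)k_0^-(z)$ collapses: the "creation" part (exponential in $h_{-r}$) and "annihilation" part (exponential in $h_r$) of $f(z)$ get absorbed, and what survives matches exactly the exponential defining $k_0^+(s_1 z)$ up to sign. Concretely I expect the scalar prefactor $-s_1 u^{-1}/((1-q_1)(1-q_3))$ of $f(z)$ times $c_1 = \kappa_1/(s_1-s_1^{-1})$ and the $K$-dependence to combine to produce the overall $-1$.

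The main obstacle — and the only genuinely delicate part — will be bookkeeping the various $q$-shifts and the matching of exponents: one must verify that the coefficient $\kappa_r/(q_1^{\mp r}-1)$ appearing in \eqref{k0 expl} evaluated at the shifted argument $s_1z$, i.e. with an extra factor $q_1^{\mp r/2}$, together with the contributions $\pm(1-q_1^{\mp r})(1-q_3^{\mp r}) s_1^{\mp r} h_{\mp r}$ coming from $e(z)$ or $f(z)$ in the Fock realization, add up to the coefficient $\kappa_r/(q_1^{\pm r}-1)$ governing $k_1^\mp$. This is a finite identity among rational functions of $q_1,q_3$ (using $q_1q_2q_3=1$) for each $r$, so it reduces to elementary algebra; I would check it by clearing denominators. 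The only subtlety beyond this is the normal-ordering contraction between the $h_r$ ($r>0$) part of one factor and the $h_{-r}$ ($r<0$) part of the other, which produces a scalar computed from the commutator \eqref{hh commutator} with $C=s_1$; this scalar, after summing the resulting geometric-type series in $z$, should precisely account for the constant term, i.e. for why $k_0^+(\infty)=K$ matches $-c_1$ times the constant term of $f(z)k_0^-(z)$. I would present the argument as: (1) cite Lemma~\ref{level 1 ideal} for $r\geq 2$; (2) substitute the Fock formulas and normal order; (3) reduce to and verify the scalar identity among the exponents; (4) check the constant/prefactor matches the sign $-1$.
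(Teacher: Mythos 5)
Your route is the same as the paper's: the $r\ge2$ vanishing is quoted from Lemma~\ref{level 1 ideal} (exactly as the paper does), and the identity $k_0^\pm(s_1z)+k_1^\mp(z)=0$ is verified by a direct vertex-operator computation in $\F_1(u)$, which is all the paper's proof does ("checked explicitly"). So the plan is sound, but two bookkeeping points in your sketch would derail a literal execution and should be fixed.

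First, the color-$1$ convention is the exchange $q_2\leftrightarrow q_1$, so in $\F_1(u)$ the exponents and prefactors of $e(z),f(z)$ carry $(1-q_2^{\pm r})(1-q_3^{\pm r})$ and $(1-q_2)(1-q_3)$ (with level $C=s_1$), not $(1-q_1^{\pm r})(1-q_3^{\pm r})$ as you wrote. This is not cosmetic: by \eqref{k0 expl} the exponents of $k_0^\pm(z)$ are $\kappa_r/(q_1^{\mp r}-1)=\pm q_1^{(1\mp1)r/2}(1-q_2^r)(1-q_3^r)$ (up to sign), and the whole computation rests on these cancelling against the matching parts of $e(z)$ and $f(z)$; with your literal formulas nothing cancels. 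Second, there is no normal-ordering contraction to compute and no series to resum: $k_0^-(z)$ involves only the $h_{-r}$ and $k_0^+(z)$ only the $h_r$, so $k_0^-(z)e(z)$ and $f(z)k_0^+(z)$ are already normal ordered. The creation part of $e(z)$ cancels the exponential of $k_0^-(z)$ on the nose, and the annihilation part of $f(z)$ cancels that of $k_0^+(z)$, using $(1-q_2^{-r})(1-q_3^{-r})=q_1^r(1-q_2^r)(1-q_3^r)$; what survives is exactly the exponential of $k_0^\pm(s_1z)$. Thus the identity reduces entirely to matching the scalar in front (the Fock prefactor of $e$ or $f$ times $c_1=-s_1(1-q_2)(1-q_3)$) against the zero modes $K^{\pm1}$; when you do this you will find it pins down the normalization of $K$ relative to the Fock parameter $u$ (and, with the Fock formulas exactly as printed, the $+$ and $-$ cases give conditions differing by a factor $q_1$), so part of the "explicit check" is fixing these normalization conventions consistently rather than accounting for any contraction scalar.
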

\begin{proof}
 The first part follows from Lemma \ref{level 1 ideal}. The last equation is checked explicitly.
\end{proof}

We compute the coproduct of $k_r^\pm(z)$.
\begin{lem}\label{lem:DeltaK}
The coproduct of the currents $k_r^\pm(z)$ 
is given by
\begin{align*}
&\Delta k^+_r(z)=\sum_{r_1,r_2\ge0\atop r_1+r_2=r}
k^+_{r_1}(C_2z)\otimes \psi_0^{r_1} k^+_{r_2}(q_1^{r_1}z)\,,\\
&\Delta k^-_r(z)=\sum_{r_1,r_2\ge0\atop r_1+r_2=r}
 \psi_0^{-r_2} k^-_{r_1}(q_1^{r_2}z)\otimes k^-_{r_2}(C_1z)\,.
\end{align*}
\end{lem}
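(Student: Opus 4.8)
The statement is an assertion about the coproduct $\Delta$ on $\E_1[K]$ of the fused currents $k_r^\pm(z)$. Since $k_r^+(z)=f_{q_1}^{(r)}(z)k_0^+(z)$ and $k_0^+(z)$ lies in the Cartan subalgebra, the plan is to compute $\Delta$ on each factor separately using the coproduct formulas \eqref{E1 coproduct}, and then to reassemble. The key preliminary step is to establish the coproduct of the one-sided fused products $f_{q_1}^{(r)}(z)=(-c_1)^r f(q_1^{r-1}z)\cdots f(q_1 z)f(z)$ and $e_{q_1}^{(r)}(z)$. Working in the topological tensor square acting on admissible modules (so that the infinite sums make sense), I would apply $\Delta_1$ repeatedly to $\Delta_1 f(z)=f(C_2z)\otimes\psi^+(z)+1\otimes f(z)$ and multiply the factors $f(q_1^{j}z)$ in order; the crucial simplification is that when a $\psi^+(\,\cdot\,)$ produced on the right factor is pushed past the subsequent $f(\,\cdot\,)$ factors, the relevant $\psi$–$f$ exchange polynomials specialize, thanks to the fusion arguments $q_1^j z$, exactly to the zero-producing identities of Lemma \ref{zero currents lem} (e.g.\ $\psi^+(q_1z)f(z)=0$) — so only the ``extreme'' cross terms survive. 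This should collapse the multinomial expansion to a single sum over a cut point $r_1+r_2=r$:
\begin{align*}
\Delta_1 f_{q_1}^{(r)}(z)=\sum_{r_1+r_2=r} f_{q_1}^{(r_1)}(C_2 z)\,\psi_{q_1}^{+,(r_1)}(q_1^{r_2}z)\otimes f_{q_1}^{(r_2)}(z)\,,
\end{align*}
and similarly for $e_{q_1}^{(r)}(z)$ with the roles of the two tensor factors and of $\pm$ interchanged.

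Next I would combine this with the Cartan part. From \eqref{k0} one has $\Delta k_0^+(z)=k_0^+(C_2 z)\otimes k_0^+(z)$ up to the $\psi_0$ twist, more precisely one checks directly from $\Delta_1\psi^+(z)=\psi^+(C_2 z)\otimes\psi^+(z)$ and the defining difference equation that $\Delta k_0^+(z)= k_0^+(C_2 z)\otimes k_0^+(z)$ modulo the bookkeeping of $\psi_0$ powers; the $\psi_0^{r_1}$ appearing in the claimed formula is precisely the residue of intertwining $\psi_{q_1}^{+,(r_1)}$ through $k_0^+$ using the relation $k_0^+(q_1 z)=\psi_0^{-1}\psi^+(z)k_0^+(z)$ of \eqref{k0}, which telescopes $\psi_{q_1}^{+,(r_1)}(q_1^{r_2}z)\,k_0^+(q_1^{r_2}z)$ into $\psi_0^{r_1}k_0^+(z)\cdot(\text{shift})$. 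Multiplying the two coproducts and matching the spectral-parameter shifts (the $q_1^{r_1}$ inside $k_{r_2}^+$ comes from pulling the $f_{q_1}^{(r_2)}$ factor through to sit on $q_1^{r_1}z$-shifted arguments relative to the $k_0^+$) yields exactly $\Delta k_r^+(z)=\sum_{r_1+r_2=r} k_{r_1}^+(C_2 z)\otimes \psi_0^{r_1}k_{r_2}^+(q_1^{r_1}z)$.

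The formula for $\Delta k_r^-(z)$ is obtained by an entirely parallel computation starting from $\Delta_1 e(z)=e(z)\otimes 1+\psi^-(z)\otimes e(C_1 z)$ and $k_r^-(z)=k_0^-(z)e_{q_1}^{(r)}(z)$, using $\psi^-(q_1 z)\,$-type vanishing from Lemma \ref{zero currents lem} and the difference equation $k_0^-(q_1 z)=\psi_0^{-1}\psi^-(z)k_0^-(z)$; alternatively one can deduce it from the $+$ case via the anti-automorphism $e\leftrightarrow f$, $\psi^\pm\leftrightarrow\psi^\pm$ together with the standard compatibility of such an anti-automorphism with $\Delta$ (it swaps the tensor factors), which accounts for the reversed order and the $\psi_0^{-r_2}$ twist. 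The main obstacle is the first step: carefully justifying, on admissible modules, that the iterated application of $\Delta_1 f$ to the fused product telescopes — i.e.\ that every cross term except the boundary one is killed by a specialization of a $\psi^+$–$f$ relation at a fusion point — and keeping the spectral shifts $C_i z$, $q_1^j z$ bookkept consistently through all the commutations. Once that combinatorial collapse is in hand, everything else is a routine tracking of shifts and $\psi_0$ powers.
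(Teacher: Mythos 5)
Your plan rests on exactly the two ingredients the paper's own (inductive) proof uses: the vanishing specialization $\psi^+(q_1u)f(u)=0$ of Lemma \ref{zero currents lem} to kill all but the boundary cross terms, and the difference equation \eqref{k0} to generate the $\psi_0$ powers and the $q_1$-shifts; the paper merely organizes the collapse as an induction on $r$ via $k^+_{r+1}(z)=-c_1f(q_1^rz)k^+_r(z)$ instead of expanding $\Delta_1 f^{(r)}_{q_1}(z)$ in one shot. So the route is essentially the same, but your displayed intermediate formula is wrong as written, and taken literally it would not reassemble into the lemma. Since $\Delta_1 f(w)=f(C_2w)\otimes\psi^+(w)+1\otimes f(w)$ places every $\psi^+$ in the \emph{second} tensor slot, the terms that survive the collapse are
\begin{align*}
\Delta_1 f^{(r)}_{q_1}(z)=\sum_{r_1+r_2=r} f^{(r_1)}_{q_1}(C_2z)\otimes f^{(r_2)}_{q_1}(q_1^{r_1}z)\,\psi^{+,(r_1)}_{q_1}(z)\,,
\end{align*}
i.e.\ in the second slot the $f$'s carrying the top arguments $q_1^{r_1}z,\dots,q_1^{r-1}z$ stand to the \emph{left} of the string $\psi^+(z)\cdots\psi^+(q_1^{r_1-1}z)$: the identity $\psi^+(q_1u)f(u)=0$ kills precisely those patterns in which a $\psi^+$ sits immediately to the left of an $f$ whose argument is one $q_1$-step lower, while the opposite-order product $f(q_1u)\psi^+(u)$ does \emph{not} vanish. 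Your version, which moves $\psi^{+,(r_1)}_{q_1}(q_1^{r_2}z)$ into the first slot, cannot be combined with $\Delta k_0^+(z)=k_0^+(C_2z)\otimes k_0^+(z)$ to produce either the factor $\psi_0^{r_1}$ or the shift $q_1^{r_1}$ inside $k^+_{r_2}$ in the second slot (indeed the arguments $q_1^{r_2}z$ and $C_2z$ would not even match for your proposed telescoping). With the corrected placement everything is immediate: $\psi^{+,(r_1)}_{q_1}(z)k_0^+(z)=\psi_0^{r_1}k_0^+(q_1^{r_1}z)$ by iterating \eqref{k0}, giving $\sum k^+_{r_1}(C_2z)\otimes\psi_0^{r_1}k^+_{r_2}(q_1^{r_1}z)$.

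Two smaller points. First, $\Delta k_0^\pm$ needs no ``$\psi_0$ bookkeeping'': with $K$ group-like one has exactly $\Delta k_0^+(z)=k_0^+(C_2z)\otimes k_0^+(z)$ and $\Delta k_0^-(z)=k_0^-(z)\otimes k_0^-(C_1z)$; all $\psi_0$'s in the lemma come from the telescoping just described. Second, your proposed shortcut for $\Delta k^-_r$ via the anti-automorphism $e\leftrightarrow f$ does not work as stated: that anti-automorphism fixes $\psi^\pm$, so it does not intertwine $\Delta_1$ with its opposite (it sends the $\psi^+$-term of $\Delta_1 f$ to a $\psi^+$-term, whereas $\Delta_1^{\mathrm{op}}e$ involves $\psi^-$). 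Use the direct parallel computation, where the cross terms are killed by $e(u)\psi^-(q_1u)=0$, as the paper does.
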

\begin{proof}
The case $r=0$ is obvious. 
We show the lemma by induction on $r$ using the recursion
\begin{align*}
k^+_{r+1}(z)=-c_1f(q_1^rz)k^+_r(z)\,, \quad
k^-_{r+1}(z)=k^-_r(z)c_1 e(q_1^rz)\,.
\end{align*} 
We have
\begin{align*}
\Delta k^+_{r+1}(z)&=(-c_1)\bigl(f(C_2q_1^rz)\otimes\psi^+(q_1^rz)
+1\otimes f(q_1^rz)\bigr) 
\sum_{r_1+r_2=r\atop r_1,r_2\ge0}k^+_{r_1}(C_2z)\otimes \psi_0^{r_1}k^+_{r_2}(q_1^{r_1}z)\\
&=\sum_{r_1+r_2=r\atop r_1,r_2\ge0}
k^+_{r_1+1}(C_2z)\otimes\psi_0^{r_1}\psi^+(q_1^rz)k^+_{r_2}(q_1^{r_1}z)
+\sum_{r_1+r_2=r\atop r_1,r_2\ge0}k^+_{r_1}(C_2z)\otimes \psi_0^{r_1}k^+_{r_2+1}(q_1^{r_1}z)\,.
\end{align*}
By Lemma \ref{zero currents lem}, $\psi^+(q_1^{r}z)f(q_1^{r-1}z)=0$, and
only the term with $r_2=0$ survives in the first sum.
The assertion follows by noting that 
$\psi^+(q_1^rz)k^+_0(q_1^rz)=\psi_0k_0^+(q_1^{r+1}z)$.

The case of $k^-_r(z)$ is entirely similar.

\end{proof}

Next, we need some identities involving $k_r^{\pm}(z)$.

\begin{lem}\label{k identity lemma}
The following identities hold in $\End \mc M_1$:
\begin{align*}
e(u)k^-_r(q_1^{-r}z)&-\omega^-_1
\Bigl(\frac{z}{u}\Bigr)k^-_r(q_1^{-r}z)e(u)
\\
&=\frac{c_1}{\kappa_1}\Bigl(
\delta\Bigl(\frac{q_1^{r+1}u}{z}\Bigr)
\psi_0
\psi^-(u)k^-_{r+1}(q_1^{-r-1}z)-
\delta\Bigl(\frac{u}{z}\Bigr)k^-_{r+1}(q_1^{-r}z)
\Bigr) \,,
\\
e(u)k^+_r(q_1^{-r}w)&-\omega^-_1\Bigl(\frac{Cq_1^{-r}w}{u}\Bigr)
k^+_r(q_1^{-r}w)e(u)
\\
&=\frac{c_1}{\kappa_1}\Bigl(
\delta\Bigl(\frac{Cq_1u}{w}\Bigr)
\psi^-(u)k^+_{r-1}(q_1^{-r}w)
-\delta\Bigl(\frac{Cq_1^{-r}w}{u}\Bigr)
\psi_0
k^+_{r-1}(q_1^{-r+1}w)
\Bigr) \,,
\\
k^-_r(q_1^{-r}z)f(u)&-\omega^+_1\Bigl(\frac{Cq_1^{-r}z}{u}\Bigr)
f(u)k^-_r(q_1^{-r}z)
\\
&=\frac{c_1}{\kappa_1}\Bigl(
\delta\Bigl(\frac{Cq_1u}{z}\Bigr)k^-_{r-1}(q_1^{-r}z)
\psi^+(u)-
\delta\Bigl(\frac{Cq_1^{-r}z}{u}\Bigr)
\psi_0^{-1}
k^-_{r-1}(q_1^{-r+1}z)
\Bigr) \,,
\\
k^+_r(q_1^{-r}w)f(u)&-\omega^+_1
\Bigl(\frac{w}{u}\Bigr)
f(u)k^+_r(q_1^{-r}w)
\\
&=\frac{c_1}{\kappa_1}\Bigl(
\delta\Bigl(\frac{q_1^{r+1}u}{w}\Bigr)k^+_{r+1}(q_1^{-r-1}w)
\psi_0^{-1}
\psi^+(u)
-\delta\Bigl(\frac{u}{w}\Bigr)k^+_{r+1}(q_1^{-r}w)
\Bigr) \,.
\end{align*}
\end{lem}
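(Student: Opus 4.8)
The plan is to establish all four identities by the same mechanism: start from the known commutation relation between $e(z)$ (resp.\ $f(z)$) and a product of $\psi^\pm$-currents together with $e$- or $f$-currents that defines $k^\pm_r$, clear denominators so that everything becomes a Laurent-polynomial identity of operators on the admissible module $\mc M_1$, and then specialize the auxiliary variables to the geometric progression $z, q_1z, \dots, q_1^{r-1}z$ that appears in the fused current. Concretely, for the first identity I would write $k^-_r(q_1^{-r}z) = k_0^-(q_1^{-r}z)\,e^{(r)}_{q_1}(q_1^{-r}z) = k_0^-(q_1^{-r}z)\,c_1^r\,e(q_1^{-r}z)e(q_1^{-r+1}z)\cdots e(q_1^{-1}z)$, and commute $e(u)$ past this product factor by factor using the $\E_1$ relation $[e(z),f(w)]$-type relations only implicitly — actually only the $ee$-relation $g(z,w)e(z)e(w)+g(w,z)e(w)e(z)=0$ and the $\psi^\pm e$ relation are needed here since there is no $f$ in $k^-_r$. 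Each time $e(u)$ passes an $e(q_1^{-j}z)$ factor it picks up a rational prefactor $-g(q_1^{-j}z,u)/g(u,q_1^{-j}z)=\omega_1(q_1^{-j}z/u)/\omega_1(q_1^{-j+1}z/u)$, producing a telescoping product that collapses to $\omega_1^-(z/u)$; passing $k_0^-(q_1^{-r}z)$ contributes, via \eqref{k0} and the $\psi^- e$ relation, the remaining factor $\psi_0$-type correction and the $\delta$-function terms. The $\delta$-function residue terms arise precisely at the specializations $u=z$ and $u=q_1^{r+1}z$ where a clearing polynomial acquires a zero, exactly as in the proof of the $[e,f]$ relation and in Lemma \ref{zero currents lem}.

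More efficiently, I would not redo this from the raw $\E_1$ relations but instead run an induction on $r$ using the recursions $k^-_{r+1}(z)=k^-_r(z)\,c_1 e(q_1^rz)$ and $k^+_{r+1}(z)=-c_1 f(q_1^rz)k^+_r(z)$ already used in the proof of Lemma \ref{lem:DeltaK}. The base case $r=0$ is just the defining $[e(u),f(u)]$ relation of $\E_1$ combined with the difference equation \eqref{k0} for $k_0^\pm$, rewritten in terms of $\omega_1^\pm$ via the identity $\omega(q_1z/w)/\omega(z/w)=-g(z,w)/g(w,z)$ stated after \eqref{om}. For the inductive step, substitute $k^-_{r+1}(q_1^{-r-1}z)$ in terms of $k^-_r(q_1^{-r-1}z)e(q_1^{-1}z)$, move $e(u)$ through using the induction hypothesis for the $k^-_r$ part and the $ee$-relation for the extra $e$ factor; the two $\delta$-terms from the hypothesis plus the new prefactor combine, and a short manipulation of $\delta$-functions (using $\delta(a)$-support to replace arguments) together with Lemma \ref{zero currents lem} to kill spurious $e(z)^2=0$ or $\psi^\pm(q_1z)e$-type terms yields the stated right-hand side. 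The third identity (involving $f$ and $k^-_r$) follows by applying the $e\leftrightarrow f$ anti-automorphism to the first, with the appropriate reversal of operator order and $\omega^-\leftrightarrow\omega^+$; the fourth follows from the second the same way.

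The main obstacle I anticipate is bookkeeping of the $\delta$-function and $\psi_0$ factors rather than any conceptual difficulty: one must be careful that the specializations producing the two $\delta$-terms are the ``boundary'' specializations of the telescoping product (where clearing the $g$-denominators forces a residue), that the arguments $q_1^{\pm 1}u$, $q_1^{r+1}u$, etc.\ of the $\delta$-functions and the shifted arguments $q_1^{-r-1}z$, $q_1^{-r+1}w$ of the $k^\pm$ currents on the right-hand side are tracked consistently through the recursion, and that the central element $C$ enters only through the $k^+$ identities (via $k^+_r$ built from $f^{(r)}_{q_1}$, which interacts with $\psi^-$ carrying a $C$-shift) and not the $k^-$ ones. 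A secondary subtlety is justifying all specializations: this is exactly the admissibility argument spelled out in the discussion of fused currents and in the proof of Lemma \ref{zero currents lem} — on an admissible module each matrix coefficient of the cleared expression is a genuine Laurent polynomial, so the substitutions $u=q_1^jz$ are legitimate — and I would simply invoke that principle rather than repeat it.
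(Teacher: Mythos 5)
Your main route is the paper's: induction on $r$ via the recursions $k^-_{r+1}(z)=c_1k^-_r(z)e(q_1^rz)$, $k^+_{r+1}(z)=-c_1f(q_1^rz)k^+_r(z)$, with the base case obtained from the exchange relations together with the difference equation \eqref{k0} (splitting $\omega^+-\omega^-$ into two delta functions), and the inductive step handled by the quadratic $ee$-relation plus $e(z)^2=0$ from Lemma \ref{zero currents lem} to discard the spurious residue. That is exactly how the paper proves the first identity, declaring the remaining three ``similar''. One small slip: for the first identity the base case does not use the $[e,f]$ relation at all (there is no $f$ in $k^-_r$); it is the $\psi^-e$ exchange relation that produces the two delta terms, and $[e,f]$ enters only in the identities where $e(u)$ meets $k^+_r$ or $f(u)$ meets $k^-_r$.

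The genuine flaw is your reduction of the third and fourth identities to the first and second via the $e\leftrightarrow f$ anti-automorphism. That anti-automorphism fixes $\psi^\pm(z)$, hence fixes $k_0^\pm(z)$, so it sends $k^-_r(z)=k_0^-(z)e^{(r)}_{q_1}(z)$ to $(-1)^rf^{(r)}_{q_1}(z)k_0^-(z)$, which is not any of the currents $k^\pm_s$; the image of the first identity is therefore a statement about $f^{(r)}_{q_1}k_0^-$, not the third identity. Moreover the first identity contains no $C$ while the third does (and any symmetry acts on $C$ only by $C\mapsto C^{\pm1}$), so no such shortcut can carry one to the other: the structurally matching pairs are first$\leftrightarrow$fourth and second$\leftrightarrow$third, and realizing even that pairing would require an anti-automorphism which also exchanges $\psi^+\leftrightarrow\psi^-$ (so $\psi_0\mapsto\psi_0^{-1}$, $h_r\mapsto h_{-r}$, with the appropriate action on $C$ dictated by \eqref{hh commutator}), whose existence you would have to verify. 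The safe fix, and what the paper intends, is simply to run your induction separately for each of the four identities, using the $\psi f$, $[e,f]$ and $ff$ relations (and $f(z)^2=0$) in the cases involving $f(u)$ or $k^+_r$.
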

\begin{proof}
We show the first equation. The other cases are similar. 

We use induction on $r$.
For $r=0$, we compute
\begin{align*}
&e(u)k^-_0(z)-\omega^-_1\Bigl(\frac{z}{u}\Bigr)k_0^-(z)e(u)=\Bigl(\omega^+\Bigl(\frac{z}{u}\Bigr)-\omega^-\Bigl(\frac{z}{u}\Bigr)
\Bigr) k^-_0(z)e(u)\,
\\
&=\frac{c_1}{s_1-s_1^{-1}}
\Bigl(\delta\Bigl(\frac{q_1^{-1}z}{u}\Bigr)
-\delta\Bigl(\frac{z}{u}\Bigr)\Bigr)k^-_0(z)e(u)\,
=\frac{c_1}{s_1-s_1^{-1}}
\Bigl(\delta\Bigl(\frac{q_1^{-1}z}{u}\Bigr)
k^-_0(z)e(q_1^{-1}z)
-\delta\Bigl(\frac{z}{u}\Bigr)k^-_0(z)e(z)
\Bigr)
\,
\\
&=\frac{c_1}{\kappa_1}
\Bigl(\delta\Bigl(\frac{q_1^{-1}z}{u}\Bigr)
{\psi_0}\psi^-(q_1^{-1}z)
k_1^-(q_1^{-1}z)
-\delta\Bigl(\frac{z}{u}\Bigr)k^-_1(z)
\Bigr)\,,
\end{align*}
where in the last step we used \eqref{k0}.

Suppose the first relation of the lemma holds for $r$. 
Using the induction hypothesis  together with  the recursion $k^-_{r+1}(q_1^{-r-1}z)=c_1k^-_r(q_1^{-r-1}z)e(q_1^{-1}z)$
, we have
\begin{align}
&e(u)k^-_{r+1}(q_1^{-r-1}z)
-\omega^-\Bigl(\frac{z}{u}\Bigr)k^-_{r+1}(q_1^{-r-1}z)e(u)\notag
\\
&=c_1k^-_r(q_1^{-r-1}z)
\Bigl\{
\omega^-\Bigl(q_1^{-1}\frac{z}{u}\Bigr)e(u)e(q_1^{-1}z)
-
\omega^-\Bigl(\frac{z}{u}\Bigr)e(q_1^{-1}z)e(u)
\Bigr\}\notag
\\
&+\frac{c_1^2}{\kappa_1}
\Bigl\{
\delta\Bigl(\frac{q_1^{r+2}u}{z}\Bigr)\psi_0\psi^-(u)k^-_{r+1}(q_1^{-r-2}z)
-\delta\Bigl(\frac{q_1u}{z}\Bigr)k^-_{r+1}(q_1^{-r-1}z)
\Bigr\}
e(q_1^{-1}z)\,. \label{aux eq}
\end{align}
Consider the first summand.
From the relations between $e(z)$ and $e(w)$, we see that
\begin{align*}
(u-z)(u-q_1^{-1}z)
(u-q_1^{-2}z)\Big(\omega\Bigl(\frac{q_1^{-1}z}{u}\Bigr) e(u)e(q_1^{-1}z)
-\omega\Bigl(\frac{z}{u}\Bigr)e(q_1^{-1}z)e(u)\Big)=0\,.
\end{align*}

This yields 
\begin{align*}
\omega^+\Bigl(\frac{q_1^{-1}z}{u}\Bigr) e(u)e(q_1^{-1}z)
-\omega^-\Bigl(\frac{z}{u}\Bigr)&e(q_1^{-1}z)e(u)
=\sum_{s=0}^2\delta\Bigl(\frac{q_1^{-s}z}{u}\Bigr)
\res_{u=q_1^{-s}z}\omega\Bigl(\frac{q_1^{-1}z}{u}\Bigr) e(u)e(q_1^{-1}z)
\frac{du}{u}\,
\\
&=\frac{c_1}{s_1-s_1^{-1}}\Bigl\{
-\delta\Bigl(\frac{z}{u}\Bigr)e(q_1^{-1}z)e(z)
+\delta\Bigl(q_1^{-2}\frac{z}{u}\Bigr)e(q_1^{-2}z)e(q_1^{-1}z)
\Bigr\},
\end{align*}
where we used $e(z)^2=0$, see Lemma \ref{zero currents lem}. This can be 
further rewritten as
\begin{align*}
\omega^-\Bigl(\frac{q_1^{-1}z}{u}\Bigr) e(u)e(q_1^{-1}z)
-\omega^-\Bigl(\frac{z}{u}\Bigr)e(q_1^{-1}z)e(u)
=-\frac{c_1}{s_1-s_1^{-1}}
\delta\Bigl(\frac{z}{u}\Bigr)e(q_1^{-1}z)e(z)\,.
\end{align*}

It follows that the first term in \eqref{aux eq}
equals
\begin{align*}
c_1k^-_r(q_1^{-r-1}z)\cdot (-1)\frac{c_1^2}{\kappa_1}
\delta\Bigl(\frac{z}{u}\Bigr) 
e(q_1^{-1}z)e(z)
=-\frac{c_1}{\kappa_1}\delta\Bigl(\frac{z}{u}\Bigr) k^-_{r+2}(q_1^{-r-1}z).
\end{align*}
Note again that  $k_{r+1}^-(q_1^{-r-1}z)e(q_1^{-1}z)=k_{r}^-(q_1^{-r-1}z)e(q_1^{-1}z)^2=0$. Therefore the second term in \eqref{aux eq} becomes
\begin{align*}
\frac{c_1}{\kappa_1}\delta\Bigl(\frac{q_1^{r+2}z}{u}\Bigr) 
\psi_0\psi^-(u)k^-_{r+2}(q_1^{-r-2}z)\,.
\end{align*}
This completes the proof.
\end{proof}

\section{Quantum toroidal $\gl_2$}\label{gl2 sec}
We fix non-zero complex numbers
$\bar q_1,\bar q_2,\bar q_3$ such that $\bar q_1\bar q_2\bar q_3=1$.
We assume the numbers $\bar q_1,\bar q_2$ are generic, meaning
that $\bar q_1^a\bar q_2^b=1$ with $a,b\in\Z$ if and only if $a=b=0$. 

We fix a choice of logarithms $\log \bar q_i$ such that $\log \bar q_1+\log \bar q_2+\log \bar q_3=0$ and define
$\bar q_i^\alpha=e^{\alpha\log \bar q_i}$ for all complex numbers $\alpha$. 

We set 
\begin{align*}
\bar s_i&=\bar q_i^{1/2}, \hspace{115pt} i=1,2,3,\\
\bar \kappa_r&=(1-\bar q_1^r)(1-\bar q_2^r)(1-\bar q_3^r), \qquad  r\in \Z.
\end{align*}

\subsection{Definition and basic properties} We recall the well-known facts about quantum toroidal algebra associated with 
$\gl_2$.

Let
\begin{align*}
& g_{00}(z,w)=g_{11}(z,w)=(z-\bar q_2w), \\
& g_{01}(z,w)=g_{10}(z,w)=(z-\bar q_1w)(z-\bar q_3w).
\end{align*} 
In the case of $\gl_2$ the exchange functions have the symmetry $g_{ij}(z,w)=g_{ji}(z,w)$. 

The quantum toroidal $\gl_2$ algebra $\E_2=\E_2(\bar q_1,\bar q_2,\bar q_3)$ is a unital associative algebra 
generated by elements $E_{i,k},F_{i,k},H_{i,r}$, invertible elements $\Psi_{i,0}$,
where $i\in\Z/2\Z$, $k\in\Z$, $r\in\Z\backslash\{0\}$, and
an invertible central element $C$.
In terms of the generating series
\begin{align*}
&E_i(z) =\sum_{k\in \Z} E_{i,k}z^{-k}, \quad 
F_i(z) =\sum_{k\in\Z} F_{i,k}z^{-k}, \quad 
\Psi_i^{\pm}(z) = \Psi_{i,0}^{\pm1}
\exp\bigl(\pm(\bar s_2-\bar s_2^{-1})
\sum_{r=1}^\infty  H_{i,\pm r}z^{\mp r}\bigr)\,,
\end{align*}
the defining relations read as follows:
\begin{align*}
&[\Psi_i^\pm(z),\Psi_j^\pm(w)]=0\,,\\
&\frac{g_{ij}(z,C^{-1}w)}{g_{ij}(z,Cw)}\, \Psi_i^+(z)\Psi_j^-(w)= \frac{g_{j,i}(C^{-1}w,z)}{g_{ji}(Cw,z)} \,\Psi_j^-(w)\Psi_i^+(w) \,,
\\ 
&g_{ij}(C^{(1\pm1)/2}z,w)\Psi_i^\pm(z)E_j(w)+(-1)^{i+j}g_{ji}(w,C^{(1\pm1)/2}z)E_j(w)\Psi_i^\pm(z)=0
\,,\\ 
&g_{ji}(w,C^{(1\mp1)/2}z) \Psi_i^\pm(z)F_j(w)+(-1)^{i+j}g_{ij}(C^{(1\mp1)/2}z,w)F_j(w)\Psi_i^\pm(z)=0\,,
\\
&[E_i(z),F_j(w)]=\frac{\delta_{ij}}{\bar s_2-\bar s_2^{-1}} 
(\delta\bigl(\frac{Cw}{z}\bigr)\Psi_i^+(w)
-\delta\bigl(\frac{Cz}{w}\bigr)\Psi_i^-(z)),\\
&g_{ij}(z,w)E_i(z)E_j(w)+(-1)^{i+j}g_{ji}(w,z)E_j(w)E_i(z)
=0, \\
&g_{ji}(w,z)F_i(z)F_j(w)+(-1)^{i+j}g_{ij}(z,w)F_j(w)F_i(z)
=0,\\
&\mathop{\on{Sym}}_{z_1,z_2,z_3}\, [E_i(z_1),[E_i(z_2),[E_i(z_3),E_j(w)]_{\bar q_2}]]_{\bar q_2^{-1}}=0, \qquad i\neq j,\\
&\mathop{\on{Sym}}_{z_1,z_2,z_3}\, [F_i(z_1),[F_i(z_2),[F_i(z_3),F_j(w)]_{\bar q_2}]]_{\bar q_2^{-1}}=0, \qquad i\neq j,
\end{align*}
where $[A,B]_p=AB-pBA$.

There exists a topological coproduct $\Delta_2:\ \E_2\to \E_2 \hat{\otimes} \E_2$ given by
\begin{align*}
&\Delta_2 E_i(z)=E_i(z)\otimes 1+\Psi_i^-(z)\otimes E_i(C_1z)\,,\\ 
&\Delta_2\Psi_i^-(z)=\Psi_i^-(z)\otimes\Psi_i^-(C_1z)\,,\\
&\Delta_2 F_i(z)=F_i(C_2z)\otimes \Psi_i^+(z)+1\otimes F_i(z)\,,\\ 
&\Delta_2\Psi_i^+(z)=\Psi_i^+(C_2z)\otimes\Psi_i^+(z)\,,\\
&\Delta_2 x=x\otimes x\quad \text{for $x=C,\Psi_{i,0}$}\,,
\end{align*}
where $C_1=C\otimes 1$, $C_2=1\otimes C$, and $i=0,1$. 

There is also an antipode and a counit but we do not use them in this paper.

\medskip

The element $\Psi_0=\Psi_{0,0}\Psi_{1,0}$ is central.

Algebra $\E_2$  has a homogeneous $\Z$-grading which we call degree and denote by $\deg$. The degrees of elements are defined  by the assignment
\begin{align}\label{hom grading 2}
&\deg E_{i,k}=\deg F_{i,k}=k\,,\quad \deg H_{i,r}=r\,,\quad 
\deg C=\deg \Psi_{i,0}=0\,.
\end{align}

Algebra $\E_2$  has another $\Z$-grading  which we call weight and denote by $\wt$. The weights of elements are defined by the assignment
\begin{align}\label{weight 2}
&\wt E_{i,k}=(-1)^{i+1}, \quad   \wt F_{i,k}=(-1)^{i}\,,\quad \wt H_{i,r}= 
\wt C=\wt \Psi_{i,0}=0\,.
\end{align}

Algebra $\E_2(\bar q_1,\bar q_2,\bar q_3)$ is symmetric with respect to change of $\bar q_1\leftrightarrow \bar q_3$. Namely, we have a Hopf algebra isomorphism $\E_2(\bar q_1,\bar q_2,\bar q_3)\xrightarrow{\sim}\E_2(\bar q_3,\bar q_2,\bar q_1)$  mapping each generator to itself.

There is a diagram automorphism of $\E_2$ sending currents $E_i(z), F_i(z),\Psi_i(z),C$ to currents 
$E_{i+1}(z)$, $F_{i+1}(z)$, $\Psi_{i+1}(z),C$, respectively.

The subalgebra of $\E_2$ generated by $E(z)=E_1(z)$, $F(z)=F_1(z)$, $\Psi_i^\pm (z)$, $C$ is isomorphic to quantum affine algebra $U_{s_2}\widehat{\gl}_2$.


\subsection{Fused currents in $\E_2$.} We define some fused currents in $\E_2$. In particular, we follow \cite{FJMM3} and use fused currents to describe two commuting subalgebras $\E_1\simeq\E_1(\bar q_1\bar q_3^{-1},\bar q_2,\bar q_3^2)$ and $\check{ \E}_1\simeq\E_1(\bar q_1^{-1}\bar q_3,\bar q_2,\bar q_1^2)$  of $\E_2=\E_2(\bar q_1,\bar q_2,\bar q_3)$.

We denote the parameters of the algebras $\E_1$ and $\check{\E}_1$ as follows:
\begin{align}\label{q-bar q}
(q_1, q_2, q_3)=(\bar q_1\bar q_3^{-1},\bar q_2,\bar q_3^2), \qquad (\check{q}_1, \check{q}_2 ,\check{q}_3)=(\bar q_1^{-1}\bar q_3,\bar q_2,\bar q_1^2).
\end{align}
Then $\check{s}_i=\check{q}_i^{1/2}$, $\check{\kappa}_r=(1-\check{q}_1^r)(1-\check{q}_2^r)(1-\check{q}_3^r)$,  $\check {c}_1=\check{\kappa}_1(\check{s}_1-\check{s}_1)^{-1}$.

Note 
\begin{align}
q_1 \check{q}_1 =1, \qquad  q_2=\check{q_2}=\bar q_2. 
\end{align}

\medskip
An $\E_2$ module $W$ is 
said to be graded if 
$W=\oplus_{d_1,d_2\in\Z} W_{d_1,d_2}$,  $\dim W_{d_1,d_2}<\infty$, and for any $x\in \E_2$ of degree $a$ and weight $b$, we have  $x W_{d_1,d_2}\subset W_{d_1+a,d_2+b}$.

An $\E_2$ module $W$ is 
said to be admissible if it is graded and  for each $d_2\in \Z$ there exists $d$ (depending on $d_2$) such that such that $W_{d_1,d_2}=0$ for all $d_1>d$. 

Let $\mc M_2$ be an admissible $\E_2$ module of level $C$. 
For example, $\mc{M}_2$ can be the direct sum of tensor products of highest weight Verma modules with respect to parallel generators of $\E_2$ obtained by Miki automorphism, see \cite{FJMM2}. Consider elements of $\E_2$ and Fourier coefficients of the fused currents as elements of $\on{End}\, \mc{M}_2$. 

Set
\begin{align*}
    a_{12}=-\frac{s_2-s_2^{-1}}{s_1-s_1^{-1}}.
\end{align*}
Let
\begin{align*}
    e(q_1^{-1}z)=a_{12}E_1(z)E_0(\bar q_3z), \qquad f(q_1^{-1}z)=-a_{12}F_0(\bar q_3z)F_1(z), \qquad \psi^\pm(q_1^{-1}z)=\Psi_0^\pm(\bar q_3z)\Psi_1^\pm(z).
\end{align*}
Note that here $e(z)$ and $f(z)$ are fused currents.
Similarly, let 
\begin{align*}
    \check{e}(\check{q}_1^{-1}z)=-a_{12}E_1(z)E_0(\bar q_1z), \qquad \check{f}(\check{q}_1^{-1}z)=a_{12} F_0(\bar q_1z)F_1(z), \qquad \check{\psi}^\pm(\check{q}_1^{-1}z)=\Psi_0^\pm(\bar q_1z)\Psi_1^\pm(z).
\end{align*}

\begin{prop}\label{gl1 sub prop} (\cite{FJMM3}) The  currents $e(z)$, $f(z)$, $\psi^\pm(z)$, and central element $C$ satisfy the relation of the quantum toroidal algebra $\E_1(\bar q_1\bar q_3^{-1},\bar q_2,\bar q_3^2)$.

Similarly, the currents $\check{e}(z)$, $\check{f}(z)$, $\check{\psi}^\pm(z)$, and central element $C$ satisfy the relations of the quantum toroidal algebra $\E_1(\bar q_1^{-1}\bar q_3,\bar q_2,\bar q_1^2)$. \qed
\end{prop}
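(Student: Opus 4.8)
The plan is to verify directly that the fused currents $e(z)$, $f(z)$, $\psi^\pm(z)$ satisfy each defining relation of $\E_1(q_1,q_2,q_3)$ with $(q_1,q_2,q_3)=(\bar q_1\bar q_3^{-1},\bar q_2,\bar q_3^2)$, working in $\End\,\mc M_2$ where the products of currents make sense by admissibility. The starting point is to record the pairwise exchange relations among $E_0(z),E_1(z),\Psi_0^\pm(z),\Psi_1^\pm(z)$ and their $F$-counterparts that follow from the $\E_2$ relations with the specific functions $g_{00}(z,w)=g_{11}(z,w)=(z-\bar q_2w)$, $g_{01}(z,w)=g_{10}(z,w)=(z-\bar q_1w)(z-\bar q_3w)$. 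In particular one needs the relation between $E_1(z)E_0(\bar q_3 z)$ at two different points, which requires commuting $E_1(z_1)E_0(\bar q_3 z_1)$ past $E_1(z_2)E_0(\bar q_3 z_2)$; here the key input is that the cross-terms $E_0(\bar q_3 z_1)E_1(z_2)$ and $E_1(z_1)E_0(\bar q_3 z_2)$ reorganize so that, after multiplying by the appropriate scalar rational function, one gets exactly $g(z,w)e(z)e(w)+g(w,z)e(w)e(z)=0$ with $g(z,w)=(z-q_1w)(z-q_2w)(z-q_3w)$. The same bookkeeping handles the $ff$ relation and the $\psi^\pm\psi^\pm$, $\psi^+\psi^-$ relations, the latter being a straightforward product of the two $\gl_2$ Cartan exchange functions.

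\textbf{Key steps in order.} First I would establish the $\psi$-$\psi$ relations: since $\Psi_i^\pm$ commute among themselves and their $+/-$ exchange is governed by $g_{ij}$, multiplying $\psi^+(q_1^{-1}z)=\Psi_0^+(\bar q_3 z)\Psi_1^+(z)$ against $\psi^-(q_1^{-1}w)$ produces the product $\frac{g_{00}(\cdots)g_{01}(\cdots)g_{10}(\cdots)g_{11}(\cdots)}{(\cdots)}$, which must be checked to collapse to $g(z,C^{-1}w)/g(z,Cw)$ for the fused function $g$. Second, the $\psi^\pm$-$e$ and $\psi^\pm$-$f$ relations: one pushes $\psi^\pm(q_1^{-1}z)=\Psi_0^\pm\Psi_1^\pm$ through $e(q_1^{-1}w)=a_{12}E_1(w)E_0(\bar q_3 w)$ using the two $\Psi_i^\pm E_j$ relations; the sign factors $(-1)^{i+j}$ in the $\E_2$ relations are what conspire to remove all signs in the $\E_1$ relation, and the spectral shifts by $\bar q_3$ are what convert the $g_{ij}$'s into the single $g$. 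Third, the $ee$, $ff$ quadratic relations and $[e,f]$: for $[e(z),f(w)]$ one expands $[a_{12}E_1(z)E_0(\bar q_3 z),\,-a_{12}F_0(\bar q_3 w)F_1(w)]$, moves the $F$'s to the left using $\Psi$-$F$ and $E$-$F$ relations, and collects the $\delta$-function terms; the constant $a_{12}=-(s_2-s_2^{-1})/(s_1-s_1^{-1})$ is precisely calibrated so that the residue of $[E_1,F_1]$ combined with that of $[E_0,F_0]$ yields $\frac1{\kappa_1}(\delta(Cw/z)\psi^+(w)-\delta(Cz/w)\psi^-(z))$ with the fused $\psi$ and $\kappa_1=(1-q_1)(1-q_2)(1-q_3)$. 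Finally, the cubic Serre relation for $e$ (and $f$) follows from the $\E_2$ Serre relations $\mathop{\on{Sym}}[E_i(z_1),[E_i(z_2),[E_i(z_3),E_j(w)]_{\bar q_2}]]_{\bar q_2^{-1}}=0$ together with the quadratic $EE$ relations, after specializing $w\to\bar q_3 z_k$ and symmetrizing; this is where Lemma \ref{ass lem} guarantees the fused products are unambiguous. The argument for $\check e,\check f,\check\psi^\pm$ is identical after replacing $\bar q_3$ by $\bar q_1$ throughout and using $(\check q_1,\check q_2,\check q_3)=(\bar q_1^{-1}\bar q_3,\bar q_2,\bar q_1^2)$; indeed it follows formally from the $\bar q_1\leftrightarrow\bar q_3$ symmetry of $\E_2$ noted above, so only one of the two cases needs genuine computation.

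\textbf{Main obstacle.} The routine but genuinely delicate point is the rational-function identity step: one must verify that the various ratios of products of $g_{ij}$'s at shifted arguments, after inserting the factor $a_{12}$ and the $\bar q_3$-shifts built into the definitions of $e,f,\psi^\pm$, reduce exactly to the $\E_1$ structure functions $g(z,w)=(z-\bar q_1\bar q_3^{-1}w)(z-\bar q_2 w)(z-\bar q_3^2 w)$ and $\kappa_1$. This is essentially the content of \cite{FJMM3}, so I would cite that reference for the bulk of the verification and only indicate the normalization choices (the $\bar q_3$ versus $\bar q_1$ shift, the sign in $f$, the constant $a_{12}$) that pin down the specific parameters in \eqref{q-bar q}. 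The commutativity $[\E_1,\check\E_1]=0$ claimed in Proposition \ref{commute prop} is a separate matter and is not needed here.
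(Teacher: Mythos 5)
Your proposal is correct and matches the paper's treatment: the paper offers no independent argument for Proposition \ref{gl1 sub prop}, simply attributing it to \cite{FJMM3}, and your outline of the direct verification (exchange-function bookkeeping, the calibration of $a_{12}$, the $\bar q_1\leftrightarrow\bar q_3$ symmetry reducing the checked case to the unchecked one) together with the deferral of the bulk of the rational-function identities to \cite{FJMM3} is essentially the same route. No gap to report.
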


We denote the algebra generated by $e(z)$, $f(z)$, $\psi^\pm(z)$, and $C$ by $\E_1$.
We denote  the algebra generated by $\check{e}(z)$, $\check{f}(z)$, $\check{\psi}^\pm(z)$, and $C$ by $\check{\E}_1$.

We warn the reader that the fusion products are infinite sums and, therefore, strictly speaking, Fourier components of $e(z)$, $f(z)$, etc., belong not to $\E_2$ but rather to a completion of $\E_2$ with respect to the homogeneous grading. Often, abusing the language one says that $\E_1$ and $\check{\E}_1$ are subalgebras of $\E_2$.

\begin{prop} \label{commute prop}(\cite{FJMM3}) 
The algebras $\E_1$ and $\check{\E}_1$ commute.  \qed
\end{prop}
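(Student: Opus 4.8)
The plan is to verify that every generator of $\E_1$ commutes with every generator of $\check{\E}_1$ by direct computation with the fused currents, exploiting the fact that each is a product of two of the three $\gl_2$ currents $E_i,F_i,\Psi_i^\pm$. First I would record the exchange relations of $\E_2$ that govern reordering of $E_1(z)$ with $E_0(w)$, $F_0(w)$, $\Psi_i^\pm(w)$, etc.; these are all of the quadratic form $g_{ij}(z,w)X(z)Y(w)=\pm g_{ji}(w,z)Y(w)X(z)$, with the $(-1)^{i+j}$ sign, together with the $[E_i,F_j]$ and $\Psi$--$\Psi$ relations. Since $e(q_1^{-1}z)=a_{12}E_1(z)E_0(\bar q_3 z)$ and $\check e(\check q_1^{-1}w)=-a_{12}E_1(w)E_0(\bar q_1 w)$ (and similarly for the $f$'s and $\psi$'s), each of the six commutators $[e,\check e]$, $[e,\check f]$, $[f,\check e]$, $[f,\check f]$, $[\psi^\pm,\check e]$, $[\psi^\pm,\check f]$, $[\psi^\pm,\check\psi^\pm]$ reduces to moving a product of two color-$0$/color-$1$ currents past another such product.

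The key technical point, which I would isolate first, is that the scalar factors telescope. For the $ee$--type commutator: moving $E_0(\bar q_3 z)$ past $E_1(w)$ produces $g_{01}(\bar q_3 z,w)/g_{01}(w,\bar q_3 z)$ (times the sign $-1$), moving it further past $E_0(\bar q_1 w)$ produces $-g_{00}(\bar q_3 z,\bar q_1 w)/g_{00}(\bar q_1 w,\bar q_3 z)$, and so on; I expect that after substituting the specific arguments $\bar q_3 z$, $\bar q_1 w$ dictated by \eqref{q-bar q}, the four quadratic factors collected from the two rounds of reordering multiply to $1$, with the $(-1)^{i+j}$ signs cancelling in pairs. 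The degenerate arguments (poles of some $g$'s at the chosen points) are precisely the ``level $1$''-type vanishings of Lemma \ref{level 1 ideal} and Lemma \ref{zero currents lem}: terms that would otherwise be singular are instead zero because $e(z)e(q_1 z)=0$, $E_1(z)^2=0$, $E_0(w)^2=0$, etc., so no genuine poles obstruct the substitution. For the $ef$--type commutators one also gets a contribution from $[E_1(z),F_1(w)]$ (the only nonzero bracket, since $[E_0,F_1]=[E_1,F_0]=0$), producing $\delta$-function terms; these must be shown to cancel between $e\check f$ and $\check f e$, again using the vanishing lemmas to kill the residual current products at the support of the $\delta$'s.

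Concretely the steps are: (1) state the reordering rules for the pairs $(E_1,E_0)$, $(E_1,F_0)$, $(E_0,E_0)$, $(F_0,F_0)$, $(\Psi_i^\pm,E_j)$, $(\Psi_i^\pm,F_j)$, $(\Psi_i^\pm,\Psi_j^\pm)$ in $\E_2$; (2) check the scalar identity: the product of exchange factors picked up in moving $\check e(\check q_1^{-1}w)$ entirely to the left of $e(q_1^{-1}z)$ equals $1$, using $q_1\check q_1=1$, $q_2=\check q_2=\bar q_2$ and the explicit $g_{ij}$; (3) handle the potentially singular substitutions via Lemmas \ref{level 1 ideal}, \ref{zero currents lem} (both in $\E_1$ and in $\check\E_1$, and the analogous $\E_2$-level identities $E_i(z)^2=0$, $F_i(z)^2=0$); (4) for the mixed $e$--$\check f$ cases, track the $[E_1,F_1]$ $\delta$-term and show it drops out. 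The main obstacle is step (2) together with the bookkeeping in step (3): one must be careful that no $g$-factor actually vanishes at the substitution point unless the accompanying operator product is genuinely zero, so that ``$0/0$'' never occurs — this is exactly the delicate interplay between the exchange functions and the level-$1$ relations, and it is where the special form of the parameters \eqref{q-bar q} (so that $e$, $\check e$ are built from $E_0$ evaluated at $\bar q_3 z$ resp.\ $\bar q_1 z$) is essential. Since the statement is quoted from \cite{FJMM3}, I would in the write-up either cite it or sketch only this scalar-telescoping-plus-vanishing mechanism rather than grinding through all six cases.
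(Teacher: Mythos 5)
There is a genuine gap, and it is exactly the one the paper flags immediately after the statement: the Serre relations of $\E_2$ are essential to this proposition, yet your plan never uses them. The paper does not reprove the result (it cites \cite{FJMM3}), but it explicitly remarks that ``the Serre relations (the last two defining relations of $\E_2$) play an essential part in the proof of Proposition \ref{commute prop}.'' Your argument runs entirely on the quadratic exchange relations plus vanishing lemmas, i.e.\ on the observation that the rational exchange factors telescope to $1$. That telescoping is true at the level of rational functions (it is the pictorial ``all crosses and bullets cancel'' statement in Section 3.3), but both $e(z)$ and $\check e(w)$ are two-sided currents, so a relation of the form $(z-aw)\,A(z)B(w)=(z-aw)\,B(w)A(z)$ does \emph{not} imply $A(z)B(w)=B(w)A(z)$: dividing out a common zero/pole can only be done after one shows the corresponding residue vanishes, and the difference of the two sides is a priori a $\delta$-function term supported at the collision point (this is precisely the caveat of Remark \ref{expand remark}, and the paper warns that the pictorial cancellation ``requires some justification''). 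Controlling those residue terms is where the work lies: they involve triple products such as $E_1(z_1)E_1(z_2)E_0(w)$ at special specializations, and it is the (cubic form of the) Serre relations, not the quadratic relations, that forces them to cancel. So step (2)--(3) of your outline, as stated, cannot be completed: ``no $g$-factor vanishes at the substitution point unless the accompanying operator product is zero'' is exactly what fails without Serre.

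A secondary issue: the vanishing statements you invoke are not available in the generality you need. Lemma \ref{level 1 ideal} ($e(z)e(q_iz)=0$) holds only in the Fock modules $\F_i(v)$ of $\E_1$, not on a general admissible $\E_2$ module, and Lemma \ref{zero currents lem} concerns $\E_1$ currents; the $\E_2$-level identities $E_i(z)^2=0$ do hold on admissible modules, but they do not kill the mixed-color products $E_1(\cdot)E_0(\cdot)E_1(\cdot)E_0(\cdot)$ at the problematic points. If you want a self-contained write-up rather than a citation of \cite{FJMM3}, the correct skeleton is: quadratic relations give the rational-function identity with total exchange factor $1$; then the cubic Serre relations \eqref{serre} (equivalent to the quartic ones on admissible modules) are used to show that all residues at the coinciding arguments vanish, so the $\delta$-terms drop and the fused currents genuinely commute as operators.
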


We remark that the Serre relations  (the last two defining relation of $\E_2$) play an essential part in the proof of Proposition \ref{commute prop}.

We note that the central elements $C$ of algebras $\E_2$, $\E_1$, $\check{\E}_1$ are the same. In addition, we have $\psi_0=\check{\psi}_0=\Psi_0$.

\medskip 

As in \eqref{e},\eqref{f}, for $r\in\Z_{\geq 0}$, we set
\begin{align*}
e^{(r)}(q_1^{-1}z)&=e_{q_1}^{(r)}(q_1^{-1}z)= (c_1a_{12})^rE_1(z)E_0(\bar q_3z)E_1(\bar q_1^{\,-1}\bar q_3z)E_0(\bar q_1^{\,-1}\bar q_3^2z)E_1(\bar q_1^{\,-2}\bar q_3^2z)\dots E_0(\bar q_1^{1-r}\bar q_3^{r}z), \\
f^{(r)}(q_1^{-1}z)&=f_{q_1}^{(r)} (q_1^{-1}z)=(c_1a_{12})^r
F_0(\bar q_1^r\bar q_3^{1-r}z)  \dots  F_1(\bar q_1^{\,-2}\bar q_3^2z)F_0(\bar q_1^{\,-1}\bar q_3^2z)F_1(\bar q_1^{\,-1}\bar q_3z) F_0(\bar q_3z)F_1(z).
\end{align*}
and 
\begin{align*}
\check{e}^{(r)}(\check{q}_1^{-1}z)
&=\check{e}_{\check{q}_1}^{(r)}(z)=
(-\check c_1
a_{12})^rE_1(z)E_0(\bar q_1z)E_1(\bar q_1\bar q_3^{\,-1}z)E_0(\bar q_1^2\bar q_3^{\,-1}z)E_1(\bar q_1^2\bar q_3^{\,-2}z)\dots E_0(\bar q_1^{r}\bar q_3^{1-r}z), \\
\check{f}^{(r)}(\check{q}_1^{-1}z)
&=\check{f}_{\check{q}_1}^{(r)}(\check{q}_1^{-1}z) =
(-\check c_1
a_{12})^r
F_0(\bar q_1^{1-r}\bar q_3^rz)  \dots  F_1(\bar q_1^2\bar q_3^{\,-2}z)F_0(\bar q_1^2\bar q_3^{\,-1}z)F_1(\bar q_1\bar q_3^{\,-1}z) F_0(\bar q_1z)F_1(z) .
\end{align*} 
We will also use currents $\psi^{\pm,(r)}(z)=\psi^{\pm,(r)}_{q_1}(z)$ and $\check{\psi}^{\pm,(r)}(z)=\check{\psi}^{\pm,(r)}_{\check{q}_1}(z)$.

Next we define fused currents $X^\pm_ i(z)$, $i\in\Z$, by the formulas
\begin{align*}
     X^+_{-r}(z)&=E_1(q_1^{-r}z)\,e^{(r)}(q_1^{-r}z),\\
      X^+_{r}(z)&=f^{(r)}(C^{-1}z)\,\big(\psi^{+,(r)}(C^{-1}z)\big)^{-1}E_1(q_1^rz),\\ 
     X^-_{-r}(z)&=(s_2-s_2^{-1})F_1(q_1^rz)\,\big(\psi^{-,(r)}(C^{-1}z)\big)^{-1}e^{(r)}(C^{-1}z), \\
      X^-_{r}(z)&=-(s_2-s_2^{-1})f^{(r)}(q_1^{-r}z)\, F_1(q_1^{-r}z),
      \end{align*}
where $r\geq 0$.

The formulas for $X_1^+(z)$ and $X_{-1}^-(z)$ can be simplified.
\begin{lem}\label{X simpl} We have
\begin{align*}
X_1^+(Cz)
&=F_0(\bar q_1z)\Psi_0^+(\bar q_1z)^{-1}, \\
X_{-1}^-(Cz)
&=(s_2-s_2^{-1})\Psi_0^-(\bar q_1z)^{-1}E_0(\bar q_1 z).
\end{align*}
\end{lem}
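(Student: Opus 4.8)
\textbf{Proof plan for Lemma \ref{X simpl}.}
The plan is to verify both formulas directly from the definitions of $X^+_r(z)$, $X^-_{-r}(z)$ specialized at $r=1$, unwinding the fused currents $f^{(1)}$, $e^{(1)}$ and $\psi^{\pm,(1)}$ back into the currents $E_i(z)$, $F_i(z)$, $\Psi_i^\pm(z)$ of $\E_2$, and then using the basic $\E_2$ relations to reorganize the products. First I would write, from the definition with $r=1$,
\begin{align*}
X_1^+(z)=f^{(1)}(C^{-1}z)\,\bigl(\psi^{+,(1)}(C^{-1}z)\bigr)^{-1}E_1(q_1 z),
\end{align*}
and then substitute $f^{(1)}(q_1^{-1}w)=c_1a_{12}F_0(\bar q_3 w)F_1(w)$ (so $f^{(1)}(w)=c_1a_{12}F_0(\bar q_1 w)F_1(q_1 w)$ after the shift $w\mapsto q_1 w$, using $q_1=\bar q_1\bar q_3^{-1}$) together with $\psi^{+,(1)}(w)=\psi^+(w)=\Psi_0^+(\bar q_1 w)\Psi_1^+(q_1 w)$. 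After the replacement $z\mapsto Cz$, the argument $C^{-1}z$ becomes $z$ and $q_1 z$ stays $q_1 z$; the claim is that the combination
\begin{align*}
c_1a_{12}\,F_0(\bar q_1 z)F_1(q_1 z)\,\bigl(\Psi_0^+(\bar q_1 z)\Psi_1^+(q_1 z)\bigr)^{-1}E_1(q_1 z)
\end{align*}
collapses to $F_0(\bar q_1 z)\Psi_0^+(\bar q_1 z)^{-1}$. The mechanism is that $\Psi_1^+(q_1 z)^{-1}E_1(q_1 z)$ should, via the $\Psi$–$E$ exchange relation with $i=j=1$ and $g_{11}(z,w)=(z-\bar q_2 w)$ evaluated at coincident-after-fusion points, reduce the product $F_1(q_1 z)\Psi_1^+(q_1 z)^{-1}E_1(q_1 z)$ to a scalar multiple of $\Psi_0^+$-type factors; more precisely one expects $F_1(w)E_1(w)$ and the normalized Cartan factor to combine so that the $E_1,F_1,\Psi_1$ currents at the fused point cancel, leaving only the color-$0$ current. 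One then tracks the constant $c_1 a_{12}$ against the residue constants $1/(\bar s_2-\bar s_2^{-1})$ from the $[E,F]$ relation to confirm the overall coefficient is exactly $1$.

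For the second identity the argument is the mirror image: from
\begin{align*}
X^-_{-1}(z)=(s_2-s_2^{-1})F_1(q_1 z)\,\bigl(\psi^{-,(1)}(C^{-1}z)\bigr)^{-1}e^{(1)}(C^{-1}z),
\end{align*}
I substitute $e^{(1)}(q_1^{-1}w)=c_1a_{12}E_1(w)E_0(\bar q_3 w)$ (hence $e^{(1)}(w)=c_1a_{12}E_1(q_1 w)E_0(\bar q_1 w)$) and $\psi^{-,(1)}(w)=\Psi_0^-(\bar q_1 w)\Psi_1^-(q_1 w)$, set $z\mapsto Cz$, and show that $F_1(q_1 z)\Psi_1^-(q_1 z)^{-1}E_1(q_1 z)$ telescopes against the color-$1$ pieces so that only $\Psi_0^-(\bar q_1 z)^{-1}E_0(\bar q_1 z)$ survives, with prefactor $(s_2-s_2^{-1})$. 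The two computations are formally transpose to each other under the anti-automorphism $E\leftrightarrow F$ together with the substitution $z\mapsto$ (reciprocal-type spectral shift), so in the write-up I would do the first in full and remark that the second is entirely analogous.

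The main obstacle will be the bookkeeping of spectral-parameter shifts and the verification that the various normalization constants ($c_1$, $a_{12}$, $s_2-s_2^{-1}$, $\bar s_2-\bar s_2^{-1}$, and the residue factors) multiply to exactly $1$ and $(s_2-s_2^{-1})$ respectively; the structural cancellation itself is forced by the $\E_2$ relations (specifically the $[E_1,F_1]$ relation producing a $\delta$-function that localizes the fused product, combined with the $\Psi_1$–$E_1$ exchange), but getting every $\bar q_i$ power in the right place requires care. A secondary subtlety is that these are identities of fused currents in $\End\,\mc M_2$ rather than in $\E_2$ itself, so one must keep the admissibility hypothesis in mind to justify substituting coincident arguments into rational-function identities, exactly as in the proof of Lemma \ref{zero currents lem}. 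I would organize the verification so that the $\delta$-function from $[E_1,F_1]$ is applied first (turning the triple product at the fused point into $\Psi_1^\pm$ at a shifted point), and then the $\Psi$–current manipulations and constant-chasing are purely mechanical.
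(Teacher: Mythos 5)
Your plan follows the same route as the paper's proof: unwind the $r=1$ definition into $c_1a_{12}\,F_0\,F_1\,(\Psi_1^+\Psi_0^+)^{-1}E_1$, use the $\Psi^+$--$E_1$ exchange relations to move $E_1$ to the left (producing a rational factor with a zero at the fused point), let that zero cancel against the pole of $F_1E_1$ coming from the $\delta\bigl(Cw'/z'\bigr)\Psi_1^+(w')$ term of the $[E_1(z'),F_1(w')]$ relation so that $\Psi_1^+$ cancels its inverse, and then check that $c_1a_{12}$ times the residue constant is $1$; the second identity is treated as the mirror case, exactly as in the paper.

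There is, however, one concrete slip that would derail the computation if executed as written: after the substitution $z\mapsto Cz$ the argument of $E_1$ is $q_1Cz$, not $q_1z$ (the definition reads $X^+_1(z)=f^{(1)}(C^{-1}z)\bigl(\psi^{+,(1)}(C^{-1}z)\bigr)^{-1}E_1(q_1z)$, so only the arguments carrying $C^{-1}$ lose their $C$). Your displayed combination $c_1a_{12}F_0(\bar q_1 z)F_1(q_1 z)\bigl(\Psi_0^+(\bar q_1 z)\Psi_1^+(q_1 z)\bigr)^{-1}E_1(q_1 z)$ is therefore not $X_1^+(Cz)$, and it does not collapse: the localization mechanism you invoke requires the $E_1$ argument to be $C$ times the $F_1$ argument, since the term $\delta\bigl(Cw'/z'\bigr)\Psi_1^+(w')$ of $[E_1(z'),F_1(w')]$ only fires at $z'=Cw'$; with $E_1(q_1z)$ against $F_1(q_1z)$ no delta function is available at level $C\ne1$ and the product of color-$1$ currents does not reduce to $\Psi_1^+$. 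Once the $C$ is restored the rest of your plan goes through, but note that the product $F_1(q_1z)\cdots E_1(q_1Cz)$ sits exactly on the pole, so the clean way to organize the calculation (as the paper does) is to keep a spare variable, i.e. work with $E_1(q_1Cw)$, insert the regularizing factor $\bigl(1-\tfrac{z}{w}\bigr)$ so that $\bigl(1-\tfrac{z}{w}\bigr)F_1(q_1z)E_1(q_1Cw)\Psi_1^+(q_1z)^{-1}\big|_{w=z}=\tfrac{1}{s_2-s_2^{-1}}$, and obtain that factor $(w-z)$ from moving $E_1(q_1Cw)$ past \emph{both} Cartan inverses $\Psi_1^+(q_1z)^{-1}$ and $\Psi_0^+(\bar q_1z)^{-1}$ (your sketch only mentions the color-$1$ exchange, but the color-$0$ exchange contributes the other factor $(w-q_1^{-1}z)$ needed for the constants to come out to exactly $1$).
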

\begin{proof}
From the $[E_1(bz),F_1(bz)]$ relation, for any $b\neq 0$, we have 
\begin{align*}
    (1-\frac{z}{w})F_1(bz)E_1(bCw)
\Psi_1^+(bz)^{-1}\Big|_{w=z}=\frac{1}{s_2-s_2^{-1}}\,.
\end{align*}
Then we compute
\begin{align*}
    X_1^+(Cz)&=c_1a_{12} F_0(\bar q_1z)F_1(q_1z)\Psi_1^+(q_1z)^{-1}
\Psi_0^+(\bar q_1z)^{-1}
E_1(Cq_1w)\Big|_{w=z}\\
    &=c_1a_{12}\frac{(w-q_1^{-1}z)(w-z)}{(w-q_2z)(w-q_3z)}
F_0(\bar q_1z)F_1(q_1z)   E_1(Cq_1w) \Psi_1^+(q_1z)^{-1}
\Psi_0^+(\bar q_1z)^{-1}
\Big|_{w=z}\\
    &=\frac{c_1a_{12}(1-q_1^{-1})}{(1-q_2)(1-q_3)}F_0(\bar q_1z)\Big((1-\frac{z}{w})F_1(q_1z)E_1(q_1Cw)
\Psi_1^+(q_1z)^{-1}\Big|_{w=z}\Big)\Psi_0^+(\bar q_1z)^{-1}\\
&=F_0(\bar q_1z)\Psi_0^+(\bar q_1z)^{-1}.
\end{align*}
    The computation for $X_{-1}^-(z)$ is similar.
\end{proof}

The currents $X^\pm_i(z)$ are defined using currents of algebra $\E_1$. We have symmetric formulas for $X^\pm_i(z)$ which use currents  of $\check{\E}_1$.

\begin{prop}\label{second way prop} For $r\geq 0$ we have 
    \begin{align*}
     X^+_{-r}(z)&=E_1(z)\,\check{e}^{(r)}(z),\\
     X^+_{r}(z)&=\check{f}^{(r)}(q_1^{r}C^{-1}z)\,\big(\check{\psi}^{+,(r)}(q_1^{r}C^{-1}z)\big)^{-1}E_1(z),\\   
      X^-_{-r}(z)&=(s_2-s_2^{-1})F_1(z)\big(\check{\psi}^{-,(r)}(q_1^{r}C^{-1}z)\big)^{-1}\check{e}^{(r)}(q_1^{r}C^{-1}z), \\
     X^-_{r}(z)&=-(s_2-s_2^{-1})\check{f}^{(r)}(z)\, F_1(z).
\end{align*}
\end{prop}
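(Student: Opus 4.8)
The plan is to prove the four identities of Proposition~\ref{second way prop} by reducing each one to the corresponding defining formula for $X^\pm_{\pm r}(z)$ in terms of $\E_1$-currents, using only two ingredients: first, that $\E_1$ and $\check\E_1$ commute (Proposition~\ref{commute prop}), and second, the ``level~1'' vanishing relations of Lemmas~\ref{level 1 ideal} and \ref{zero currents lem}, which hold because we work on an admissible module $\mc M_2$. The key observation is that both $e^{(r)}(q_1^{-r}z)$ and $\check e^{(r)}(\check q_1^{-r}z)$ are built from exactly the same ordered product of $E_1$'s and $E_0$'s up to the spectral shifts dictated by \eqref{q-bar q}; the difference between the two expressions $X^+_{-r}(z)=E_1(q_1^{-r}z)e^{(r)}(q_1^{-r}z)$ and $E_1(z)\check e^{(r)}(z)$ is only a reshuffling of which $E_1$ factor is singled out and how the remaining factors are grouped. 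Since all the fusion products in this paper are associative (Lemma~\ref{ass lem}), there are no parenthesization issues, so the content is really combinatorial bookkeeping of spectral parameters plus the use of the Serre-type quadratic relations $E_i(z)E_j(w)$ to move one $E_1$ factor past the others.

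Concretely, I would proceed as follows. Begin with $X^+_{-r}(z)$, which is the cleanest case. Write out $E_1(q_1^{-r}z)e^{(r)}(q_1^{-r}z)$ as an ordered product of $2r+1$ currents in the $E_1,E_0$ alphabet with explicit arguments, and do the same for $E_1(z)\check e^{(r)}(z)$. Then show the two products agree by induction on $r$: the inductive step amounts to verifying that moving the extra $E_1$ factor from the far left (the $\E_1$ description) to the interleaved position (the $\check\E_1$ description) is governed entirely by the $g_{ij}(z,w)E_iE_j+\dots=0$ relations, and that the accumulated scalar factors $(c_1a_{12})^r$ versus $(-\check c_1 a_{12})^r$ match because of the identity $q_1\check q_1=1$, which gives $\check c_1=\check\kappa_1/(\check s_1-\check s_1^{-1})$ a precise relation to $c_1$. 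For $X^-_r(z)=-(s_2-s_2^{-1})f^{(r)}(q_1^{-r}z)F_1(q_1^{-r}z)$ versus $-(s_2-s_2^{-1})\check f^{(r)}(z)F_1(z)$ the argument is the mirror image using the $F_iF_j$ relations and the anti-automorphism $e\leftrightarrow f$. For the two ``positive $r$'' mixed cases $X^+_r(z)$ and $X^-_{-r}(z)$, which involve the Cartan factors $(\psi^{\pm,(r)})^{-1}$ and $(\check\psi^{\pm,(r)})^{-1}$, I would additionally use that $\psi^\pm(q_1^{-1}z)=\Psi_0^\pm(\bar q_3 z)\Psi_1^\pm(z)$ and $\check\psi^\pm(\check q_1^{-1}z)=\Psi_0^\pm(\bar q_1 z)\Psi_1^\pm(z)$, expressing both $\psi^{\pm,(r)}$ products in the $\Psi_i$ variables and checking they coincide after the spectral shift $z\mapsto q_1^r C^{-1}z$ versus $z\mapsto C^{-1}z$; the $\Psi$-factors commute among themselves so this is a telescoping identity of rational functions, and then one slides $F_1(q_1^rz)$ or $E_1$ through using the $\Psi_i E_j$ and $\Psi_i F_j$ exchange relations, exactly as in the proof of Lemma~\ref{X simpl}.

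Alternatively—and this might be the cleaner write-up—one can avoid re-deriving everything from $\E_2$ relations by noting that the roles of $\E_1$ and $\check\E_1$ are completely symmetric: both arise from the construction of \cite{FJMM3} by fusing $E_1$ with $E_0$, one using the shift $\bar q_3$ and the other using $\bar q_1$, and the definitions of $X^\pm_i(z)$ single out the $E_1,F_1$ currents of $\E_2$ (not of $\E_1$). So one should be able to argue that $X^+_{-r}(z)$, as an element of (a completion of) $\E_2$, does not actually ``know'' whether it was assembled via $\E_1$ or via $\check\E_1$; the formula $X^+_{-r}(z)=E_1(q_1^{-r}z)e^{(r)}(q_1^{-r}z)$ and the formula $X^+_{-r}(z)=E_1(z)\check e^{(r)}(z)$ are two factorizations of the same fused current, and the task is to exhibit the explicit equality of ordered products. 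I would present this as the main lemma: the $(r{+}1)$-fold fused current $E_1(z)E_0(\bar q_3 z)E_1(\bar q_1^{-1}\bar q_3 z)\cdots$ can be regrouped, using associativity of fusion and the quadratic $E$-relations, as $E_1(q_1^{-r}z)$ times the $\E_1$-fused $e^{(r)}$, and also as $E_1(z)$ times the $\check\E_1$-fused $\check e^{(r)}$ after reversing the order of the spectral shifts.

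The main obstacle I anticipate is purely notational: keeping the towers of spectral arguments $\bar q_1^{a}\bar q_3^{b}z$ straight across the two descriptions and verifying that the scalar normalizations $(c_1 a_{12})^r$ and $(-\check c_1 a_{12})^r$ really do reconcile. In particular one must check carefully that the sign $(-1)^r$ appearing in the $\check e^{(r)},\check f^{(r)}$ definitions is exactly compensated by the reversal in how the $E_1$ and $E_0$ factors interleave (equivalently, by the $(-1)^{i+j}$ sign in the $E_iE_j$ relation when the color alternates), and that the substitution $z\mapsto q_1^r C^{-1}z$ in the Cartan prefactor for $X^+_r(z)$ is forced by matching $\psi^{+,(r)}(C^{-1}z)$ with $\check\psi^{+,(r)}(q_1^r C^{-1}z)$ as rational functions in $z$. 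Once the bookkeeping is set up consistently with \eqref{q-bar q}, each of the four identities follows by a short induction, and no new structural input beyond Propositions~\ref{gl1 sub prop}, \ref{commute prop} and Lemmas~\ref{level 1 ideal}, \ref{ass lem}, \ref{X simpl} is needed.
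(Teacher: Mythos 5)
Your plan for the two ``pure'' cases ($X^+_{-r}$ and $X^-_r$) is plausible but not what the paper does, and your plan for the two mixed cases contains a false step. Concretely: you claim that after expressing everything in the $\Psi_i$ variables, $\psi^{+,(r)}(C^{-1}z)$ and $\check\psi^{+,(r)}(q_1^rC^{-1}z)$ ``coincide'' as a telescoping identity, so that the problem reduces to sliding $E_1$ or $F_1$ through by exchange relations. This is not true. Writing $\psi^+(v)=\Psi_0^+(\bar q_3 q_1 v)\Psi_1^+(q_1 v)$ and $\check\psi^+(v)=\Psi_0^+(\bar q_3 v)\Psi_1^+(q_1^{-1}v)$, the $\Psi_0^+$ factors of the two products agree, but the $\Psi_1^+$ factors differ by the ratio $\Psi_1^+(q_1^rC^{-1}z)\,\Psi_1^+(C^{-1}z)^{-1}$; correspondingly $f^{(r)}(C^{-1}z)$ and $\check f^{(r)}(q_1^rC^{-1}z)$ contain the same $F_0$'s but different extreme $F_1$ factors ($F_1(q_1^rC^{-1}z)$ versus $F_1(C^{-1}z)$), and the $E_1$ arguments differ ($q_1^rz$ versus $z$). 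These three mismatches cancel only through the $[E_1,F_1]$ delta-function contraction $F_1(u)\Psi_1^+(u)^{-1}E_1(Cu)\sim\mathrm{const}$, i.e.\ exactly the mechanism of Lemma \ref{X simpl}; sliding via the $\Psi_iE_j$, $\Psi_iF_j$ exchange relations alone can never produce it. The paper handles this by regrouping $X^+_r$ into blocks $f(q_1^kz)\psi^+(q_1^kz)^{-1}$ and running the same induction as in the first case, with Lemma \ref{X simpl} (and its checked analogue) furnishing the $r=1$ step.

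For $X^+_{-r}$ your inductive step --- moving the distinguished $E_1$ factor through the staircase using the quadratic $g_{ij}E_iE_j$ relations --- is also not ``purely notational'': each transposition is an exchange of two \emph{specialized} factors inside a longer fused product, and by Remark \ref{expand remark} such exchanges require checking at every step that the specialization is not a zero of the relevant exchange function and that the intermediate ordered product is well defined (some adjacent pairs in the staircase, e.g.\ $E_0(\bar q_3 u)$ and $E_1(\bar q_1^{-1}\bar q_3 u)$, sit exactly at such zeros and cannot be reordered at all). The paper avoids this entirely: although you list Proposition \ref{commute prop} as an ingredient, your concrete steps never use it, whereas the paper's induction is built on it --- the identity $[e(z),\check e(w)]=0$ in \emph{independent} variables lets one rewrite $X^+_{-r}$ via the induction hypothesis and then commute the whole fused $e(q_1^{-1}z)$ past $\check e^{(r-1)}(q_1^{-1}z)$ in one stroke, the only specialized exchange being the single $E_1E_1$ swap that settles $r=1$ (and fixes the constants $c_1$ versus $-\check c_1$). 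Your ``alternative'' regrouping paragraph has the same two problems. So the proposal as written has a genuine gap in the mixed cases and an unjustified (though repairable, for generic parameters) manipulation in the pure cases; patching both essentially leads you back to the paper's argument.
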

\begin{proof} We use induction on $r$. 
The case of $r=0$ is trivial.

Consider the first equation.  The case of $r=1$ follows from 
\begin{align*}
(q_1^{-1}
-q_2) 
E_1(q_1^{-1}z) 
E_1(z)=-(1-q_2q_1^{-1})E_1(z)E_1(q_1^{-1}z)
.
\end{align*}

Then using induction hypothesis, commutativity of $e(z)$ with $\check{e}(w)$, and the case $r=1$ one more time, we obtain:
\begin{align*}
E_1(q_1^{-r}z) e^{(r)}(q_1^{-r}z)= E_1(q_1^{-r}z) e^{(r-1)}(q_1^{-r}z)  e(q_1^{-1}z)=E_1(q_1^{-1}z)\check{e}^{(r-1)}(q_1^{-1}z)  
e(q_1^{-1}z)\\
=E_1(q_1^{-1}z)e(q_1^{-1}z) \check{e}^{(r-1)}(q_1^{-1}z) 
=E_1(z)\check{e}(z)\check{e}^{(r-1)}(q_1^{-1}z)
=E_1(z)\check{e}^{(r)}(z).
\end{align*}
The proof of the last equation is similar.

Consider the second equation. Using $f(z)\psi^+(z)^{-1}f(w)\psi^+(w)^{-1}=f(w)f(z)\psi^+(w)^{-1}\psi^+(z)^{-1}$, current $X^+_r(z)$ can be rewritten in the following form:
\begin{align*}
 c_1^{-r}X^+_{r}(Cz)&=f(q_1^{r-1}z)\dots f(q_1z) f(z) \psi^+(q_1^{r-1}z)^{-1}\dots \psi^+(q_1z)^{-1}\psi^{+}(z)^{-1}E_1(q_1^rCz)\\&=f(z)\psi^+(z)^{-1}f(q_1z)\psi^+(q_1z)^{-1}\dots f(q_1^{r-1} z) \psi^+(q_1^{r-1}z)^{-1}E_1(q_1^rCz).
\end{align*}
After that the proof is the same as in the previous case. The case of $r=1$  follows from Lemma \ref{X simpl} and a similar computation for the checked part.
\end{proof}

From the definition and Proposition \ref{second way prop}, we obtain the following recursions for currents $X^\pm_i(z)$.
\begin{cor}\label{inductive X cor}
We have
\begin{align*}
X^+_{i-1}(z)&=c_1X^+_i(q_1^{-1}z)e(q_1^{-1}z)=\check{c}_1X^+_{i}(z)\check{e}(q_1^iz)  &(i\leq 1),\ \ \\
X^-_{i-1}(z)&=c_1X^-_i(q_1 z){\psi^-(C^{-1}z)}^{-1}e(C^{-1}z)
=\check c_1X^-_i(z){\check \psi^-(C^{-1}q_1^{-i+1}z)}^{-1}
\check e(C^{-1}q_1^{-i+1}z)&(i\leq 0),\ \ \\
X^+_{i+1}(z)&=-c_1 f(C^{-1}z){\psi^+(C^{-1}z)}^{-1}X^+_i(q_1 z)=-\check c_1 \check f(C^{-1}q_1^{i+1}z){\psi^+(C^{-1}q_1^{i+1}z)}^{-1}
X^+_i(z) &(i\ge 0),\ \  \\
X^-_{i+1}(z)&=-c_1f(q_1^{-1}z)X^-_i(q_1^{-1} z)=-\check c_1 \check f(q_1^{-i}z)X^-_i(z) &(i\ge -1).
\end{align*} 
\qed 
\end{cor}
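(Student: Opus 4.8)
The plan is to read each of the four recursions straight off the closed expressions for the currents. Each line consists of an $\E_1$-version (the first equality, written with $e$, $f$, $\psi^\pm$, $c_1$) and a $\check{\E}_1$-version (the second equality, written with $\check e$, $\check f$, $\check\psi^\pm$, $\check c_1$); for the $\E_1$-versions I would substitute the defining formulas for $X^\pm_i(z)$, and for the $\check{\E}_1$-versions the reformulations of Proposition~\ref{second way prop}. The two halves are literally the same computation once one exchanges $(e,f,\psi^\pm,c_1)$ with $(\check e,\check f,\check\psi^\pm,\check c_1)$ and uses $\check q_1=q_1^{-1}$ to translate the spectral shifts, so it suffices to treat one half carefully.

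The only input beyond unwinding definitions is the obvious recursive structure of the fused currents --- $e^{(r+1)}$ equals $e^{(r)}$ times one more factor $e$ at a shifted argument, $f^{(r+1)}$ equals one more factor $f$ times $f^{(r)}$, $\psi^{\pm,(r+1)}$ equals $\psi^{\pm,(r)}$ times one more $\psi^\pm$, and likewise for the checked currents --- together with the reordering identity $f(z)\psi^+(z)^{-1}f(w)\psi^+(w)^{-1}=f(w)f(z)\psi^+(w)^{-1}\psi^+(z)^{-1}$ used in the proof of Proposition~\ref{second way prop} to put $X^+_r(z)$ and $X^-_{-r}(z)$ in ``alternating form''. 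Plugging these in and splitting off the leftmost, respectively rightmost, single factor produces the asserted recursion for every index that stays within one regime, which is every index except $i=1$ in the first line and $i=-1$ in the last. The commutativity of $\E_1$ and $\check{\E}_1$ (Proposition~\ref{commute prop}) is what lets the detached factor be pushed through the surviving fused product when passing from the $\E_1$- to the $\check{\E}_1$-version.

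The two remaining steps --- the $i=1$ case of the first line and the $i=-1$ case of the last, which link the ``pure'' currents $X^+_0(z)=E_1(z)$ and $X^-_0(z)$ to the ``mixed'' currents $X^+_1(z)$ and $X^-_{-1}(z)$ --- are exactly the simplification carried out in Lemma~\ref{X simpl}: one commutes the bare $E_1$, respectively $F_1$, current past the $\Psi_i^\pm$ factors and uses the $[E_i(z),F_i(w)]$ relation to cancel an $EF$ pair against the accompanying $\delta$-function. I expect those two crossover steps, together with keeping correct track of the spectral shifts by $q_1$ and $C$ and of the constants $a_{12}$, $c_1$, $\check c_1$, to be the only places needing genuine care; everything else is a one-line substitution.
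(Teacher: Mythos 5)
Your proposal is correct and is essentially the paper's own (one-line) argument: the corollary is read off by unwinding the defining fused-current expressions together with those of Proposition \ref{second way prop}, peeling off a single factor (using the $f\psi^{+\,-1}$ reordering identity where needed), and you correctly isolate the only two nontrivial cases, $i=1$ in the first line and $i=-1$ in the last. The only caveat is that those crossover cases are not literally a reuse of Lemma \ref{X simpl} but an analogous computation (commuting the bare $E_1$, resp.\ $F_1$, past the $\Psi_0^\pm$ factor and cancelling the adjacent $F_0E_0$, resp.\ $E_0F_0$, pair against the $[E_0,F_0]$ delta-function), which is exactly the mechanism you describe, so the approach stands.
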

Later we show that the equations of Corollary \ref{inductive X cor} are valid for all $i\in\Z$,
see Corollary \ref{cor:rec-all}.
\subsection{Pictorial presentations of fused currents}
It is convenient to picture fused  currents in $\E_2$ as a collection of colored boxes in a plane with coordinates $\bar q_1^{-1}, \bar q_3^{-1}$, see Figure \ref{pic}. Similar pictures were used to  visualize thin $\E_n$ modules in \cite{FJMM1}, \cite{FJMM3}. We expect such pictures will be useful for $\gl_n$ generalizations of the present paper.

\def\wbox at (#1,#2){\draw[thick] (#1,#2)--++(0.5,0)--++(0,0.5)--++(-0.5,0)--(#1,#2);}

\def\bbox at (#1,#2){\filldraw[fill=gray,-] (#1,#2)--++(0.5,0)--++(0,0.5)--++(-0.5,0)--(#1,#2);}

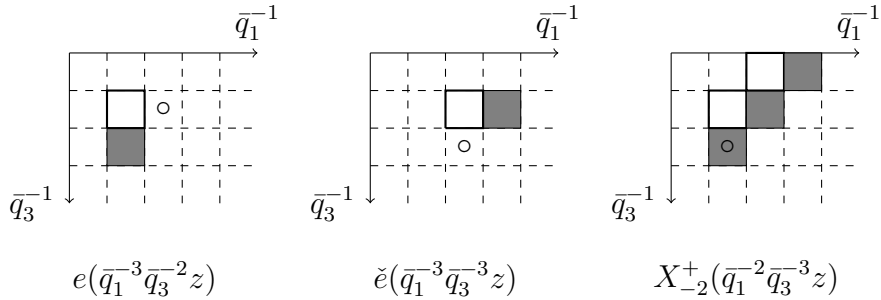
\begin{figure}[ht]
\centering
\begin{tikzpicture}
\draw[ ->] (-6,0) -- (-3.5,0);
\node at (-3.5,0.4) {$\bar q_1^{-1}$};
\node at (-6.5,-2) {$\bar q_3^{-1}$};
\draw[dashed, ] (-6,-0.5) -- (-3.5,-0.5);
\draw[dashed, ] (-6,-1) -- (-3.5,-1);
\draw[dashed, ] (-6,-1.5) -- (-3.5,-1.5);
\draw[ ->] (-6,0) -- (-6,-2);
\draw[dashed, ] (-5.5,0) -- (-5.5,-2);
\draw[dashed, ] (-5,0) -- (-5,-2);
\draw[dashed, ] (-4.5,0) -- (-4.5,-2);
\draw[dashed, ] (-4,0) -- (-4,-2);
\bbox at (-5.5,-1.5);
\wbox at (-5.5,-1);
\node at(-4.75,-0.75) {$\circ$};
\node at (-5, -3) {$e(\bar q_1^{-3}\bar q_3^{-2}z)
$
};

\draw[ ->] (-2,0) -- (0.5,0);
\node at (0.5,0.4) {$\bar q_1^{-1}$};
\node at (-2.5,-2) {$\bar q_3^{-1}$};
\draw[dashed, ] (-2,-0.5) -- (0.5,-0.5);
\draw[dashed, ] (-2,-1) -- (0.5,-1);
\draw[dashed, ] (-2,-1.5) -- (0.5,-1.5);
\draw[ ->] (-2,0) -- (-2,-2);
\draw[dashed, ] (-1.5,0) -- (-1.5,-2);
\draw[dashed, ] (-1,0) -- (-1,-2);
\draw[dashed, ] (-0.5,0) -- (-0.5,-2);
\draw[dashed, ] (0,0) -- (0,-2);
\bbox at (-0.5,-1);
\wbox at (-1,-1);
\node at(-0.75,-1.25) {$\circ$};
\node at (-1, -3) {$\check{e}(\bar q_1^{-3}\bar q_3^{-3}z)$
};

\draw[ ->] (2,0) -- (4.5,0);
\node at (4.5,0.4) {$\bar q_1^{-1}$};
\node at (1.5,-2) {$\bar q_3^{-1}$};
\draw[dashed, ] (2,-0.5) -- (4.5,-0.5);
\draw[dashed, ] (2,-1) -- (4.5,-1);
\draw[dashed, ] (2,-1.5) -- (4.5,-1.5);
\draw[ ->] (2,0) -- (2,-2);
\draw[dashed, ] (2.5,0) -- (2.5,-2);
\draw[dashed, ] (3,0) -- (3,-2);
\draw[dashed, ] (3.5,0) -- (3.5,-2);
\draw[dashed, ] (4,0) -- (4,-2);
\bbox at (2.5,-1.5);
\wbox at (2.5,-1);
\bbox at (3,-1);
\wbox at (3.0,-0.5);
\bbox at (3.5,-0.5);
\node at(2.75,-1.25) {$\circ$};
\node at (3, -3) {$X^+_{-2}(\bar q_1^{-2}\bar q_3^{-3}z)
$};

\end{tikzpicture}
\caption{Fused currents in $\E_2$.}\label{pic}
\end{figure}

In the pictures we represent each operator 
$E_1(\bar q_1^{-i}\bar q_3^{-j}z)$ 
participating in the fused current by  a gray box at the position $(i,j)$.  By convention, the top left box has position $(1,1)$.

Similarly, each current 
$E_0(\bar q_1^{-i}\bar q_3^{-j}z)$ 
is represented by a white box at the position $(i,j)$. The circle shows the shift of the argument $z$ in the name of the fusion current.

Thus, $e(z)$ is a vertical domino, $\check e(z)$
is a horizontal domino, and $X^+_{-r}(z)$, $r\geq 0$, is a staircase picture with $(r+1)$ gray boxes and $r$ white boxes. Cutting along vertical edges we obtain the formulas for $X^+_{-r}$ in terms of $e(z)$ as in the definition.  Cutting along horizontal edges writes  $X^+_{-r}(z)$ in terms of $e(z)$ as in Proposition \ref{second way prop}. See also Corollary \ref{inductive X cor}. Note that in our pictures any two boxes which share an edge have different colors.

We note that such pictures do not give the order in which currents are multiplied to produce a well-defined fused current. The pictures also do not reflect constants in front of operators.

The currents $f(z),\check{f}(z)$ and $X^-_r(z)$, $r\geq 0$, could be represented by the same pictures, where boxes correspond to operators $F_i(z)$.

\medskip 

As an illustration we show how pictures can be used for studying relations. In Figure \ref{E1 pic} we show the commutation relations of $E_1(w)$ with various pictures. For a pictured fused current $A(z)$, we
put bullets for the position of zeroes and crosses for the positions of poles in the $z$ plane in the informal relation $A(z)E_1(w)=f(z)E_1(w)A(z)$. For example, we have a relation $(z'-\bar q_2w)E_1(z')E_1(w)=(w-\bar q_2z')E_1(w)E_1(z')$ which is depicted in the left most picture as cross in the position $w=q_2^{-1}z'$ and zero at $w=\bar q_2z'$. Here the circle indicates, $z'=\bar q_1^{-2}\bar q_3^{-3}z$. The second picture corresponds to the relation $(z'-\bar q_1w)(z'-\bar q_3w) E_0(z')E_1(w)+(w-\bar q_1z')(w-\bar q_3z')E_1(w)E_0(z')=0$ and the rightmost picture describes the relation of $E_1(w)$ with $\check{e}(z')$ which has the form 
$$(z'-w)(z'-\bar q_1\bar q_3^{-1}w)\check{e}(z') E_1(w)+ (z'-\bar q_1^2w)(z'-\bar q_2w)E_1(w) \check{e}(z')=0.$$

The circle is still showing the shift $c$ in $A(cz)$.

Note that a zero and a cross in the same position can be canceled. (Some justification is required, see Remark \ref{expand remark}, \cite{FJMM3}, which we ignore here.)

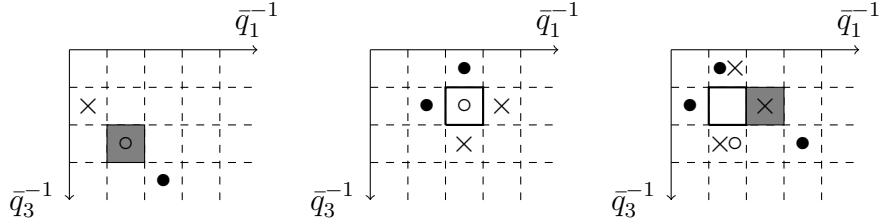
\begin{figure}[ht]
\centering
\begin{tikzpicture}
\draw[ ->] (-6,0) -- (-3.5,0);
\node at (-3.5,0.4) {$\bar q_1^{-1}$};
\node at (-6.5,-2) {$\bar q_3^{-1}$};
\draw[dashed, ] (-6,-0.5) -- (-3.5,-0.5);
\draw[dashed, ] (-6,-1) -- (-3.5,-1);
\draw[dashed, ] (-6,-1.5) -- (-3.5,-1.5);
\draw[ ->] (-6,0) -- (-6,-2);
\draw[dashed, ] (-5.5,0) -- (-5.5,-2);
\draw[dashed, ] (-5,0) -- (-5,-2);
\draw[dashed, ] (-4.5,0) -- (-4.5,-2);
\draw[dashed, ] (-4,0) -- (-4,-2);
\bbox at (-5.5,-1.5);
\node at(-5.25,-1.25) {$\circ$};
\node at(-5.75,-0.75) {$\times$};
\node at(-4.75,-1.75) {$\bullet$};

\draw[ ->] (-2,0) -- (0.5,0);
\node at (0.5,0.4) {$\bar q_1^{-1}$};
\node at (-2.5,-2) {$\bar q_3^{-1}$};
\draw[dashed, ] (-2,-0.5) -- (0.5,-0.5);
\draw[dashed, ] (-2,-1) -- (0.5,-1);
\draw[dashed, ] (-2,-1.5) -- (0.5,-1.5);
\draw[ ->] (-2,0) -- (-2,-2);
\draw[dashed, ] (-1.5,0) -- (-1.5,-2);
\draw[dashed, ] (-1,0) -- (-1,-2);
\draw[dashed, ] (-0.5,0) -- (-0.5,-2);
\draw[dashed, ] (0,0) -- (0,-2);
\wbox at (-1,-1);
\node at(-0.75,-1.25) {$\times$};
\node at(-0.25,-0.75) {$\times$};
\node at(-0.75,-0.75) {$\circ$};
\node at(-0.75,-0.25) {$\bullet$};
\node at(-1.25,-0.75) {$\bullet$};

\draw[ ->] (2,0) -- (4.5,0);
\node at (4.5,0.4) {$\bar q_1^{-1}$};
\node at (1.5,-2) {$\bar q_3^{-1}$};
\draw[dashed, ] (2,-0.5) -- (4.5,-0.5);
\draw[dashed, ] (2,-1) -- (4.5,-1);
\draw[dashed, ] (2,-1.5) -- (4.5,-1.5);
\draw[ ->] (2,0) -- (2,-2);
\draw[dashed, ] (2.5,0) -- (2.5,-2);
\draw[dashed, ] (3,0) -- (3,-2);
\draw[dashed, ] (3.5,0) -- (3.5,-2);
\draw[dashed, ] (4,0) -- (4,-2);
\wbox at (2.5,-1);
\bbox at (3,-1);
\node at(2.85,-1.25) {$\circ$};
\node at(2.65,-1.25) {$\times$};
\node at(2.85,-0.25) {$\times$};
\node at(3.25,-0.75) {$\times$};
\node at(2.25,-0.75) {$\bullet$};
\node at(3.75,-1.25) {$\bullet$};
\node at(2.65,-0.25) {$\bullet$};
\end{tikzpicture}
\caption{Commutation with $E_1$.}\label{E1 pic}
\end{figure}

Similarly, in Figure \ref{E0 pic} we show the commutation relations of $E_0(w)$ with various pictures. 

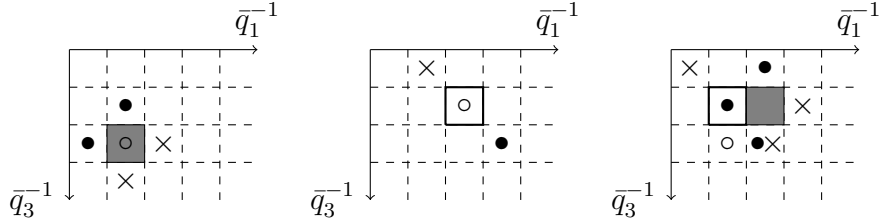
\begin{figure}[ht]
\centering
\begin{tikzpicture}
\draw[ ->] (-6,0) -- (-3.5,0);
\node at (-3.5,0.4) {$\bar q_1^{-1}$};
\node at (-6.5,-2) {$\bar q_3^{-1}$};
\draw[dashed, ] (-6,-0.5) -- (-3.5,-0.5);
\draw[dashed, ] (-6,-1) -- (-3.5,-1);
\draw[dashed, ] (-6,-1.5) -- (-3.5,-1.5);
\draw[ ->] (-6,0) -- (-6,-2);
\draw[dashed, ] (-5.5,0) -- (-5.5,-2);
\draw[dashed, ] (-5,0) -- (-5,-2);
\draw[dashed, ] (-4.5,0) -- (-4.5,-2);
\draw[dashed, ] (-4,0) -- (-4,-2);
\bbox at (-5.5,-1.5);
\node at(-5.25,-1.25) {$\circ$};
\node at(-5.25,-0.75) {$\bullet$};
\node at(-4.75,-1.25) {$\times$};
\node at(-5.75,-1.25) {$\bullet$};
\node at(-5.25,-1.75) {$\times$};

\draw[ ->] (-2,0) -- (0.5,0);
\node at (0.5,0.4) {$\bar q_1^{-1}$};
\node at (-2.5,-2) {$\bar q_3^{-1}$};
\draw[dashed, ] (-2,-0.5) -- (0.5,-0.5);
\draw[dashed, ] (-2,-1) -- (0.5,-1);
\draw[dashed, ] (-2,-1.5) -- (0.5,-1.5);
\draw[ ->] (-2,0) -- (-2,-2);
\draw[dashed, ] (-1.5,0) -- (-1.5,-2);
\draw[dashed, ] (-1,0) -- (-1,-2);
\draw[dashed, ] (-0.5,0) -- (-0.5,-2);
\draw[dashed, ] (0,0) -- (0,-2);
\wbox at (-1,-1);
\node at(-0.25,-1.25) {$\bullet$};
\node at(-0.75,-0.75) {$\circ$};
\node at(-1.25,-0.25) {$\times$};

\draw[ ->] (2,0) -- (4.5,0);
\node at (4.5,0.4) {$\bar q_1^{-1}$};
\node at (1.5,-2) {$\bar q_3^{-1}$};
\draw[dashed, ] (2,-0.5) -- (4.5,-0.5);
\draw[dashed, ] (2,-1) -- (4.5,-1);
\draw[dashed, ] (2,-1.5) -- (4.5,-1.5);
\draw[ ->] (2,0) -- (2,-2);
\draw[dashed, ] (2.5,0) -- (2.5,-2);
\draw[dashed, ] (3,0) -- (3,-2);
\draw[dashed, ] (3.5,0) -- (3.5,-2);
\draw[dashed, ] (4,0) -- (4,-2);

\wbox at (2.5,-1);
\bbox at (3,-1);

\node at(2.75,-1.25) {$\circ$};
\node at(3.15,-1.25) {$\bullet$};
\node at(3.25,-0.25) {$\bullet$};

\node at(2.25,-0.25) {$\times$};
\node at(2.75,-0.75) {$\bullet$};
\node at(3.75,-0.75) {$\times$};
\node at(3.35,-1.25) {$\times$};

\end{tikzpicture}
\caption{Commutation with $E_0$.}\label{E0 pic}
\end{figure}

Now using the rightmost pictures in Figures \ref{E1 pic} and \ref{E0 pic}, we readily see that current $E_1(w)E_0(q_3w)$ (vertical domino corresponding to $e(w)$) commutes with horizontal domino (corresponding to $\check{e}(z)$) as all crosses and bullets cancel.

\section{Algebra $\mc A_0$}\label{A sec}
We give another realization of quantum toroidal $\E_2$.
\subsection{Definition}

We choose $\beta,\beta^\vee$ such that 
\begin{align}\label{beta}
     q_3=q_1^{-\beta} ,\qquad \check{q}_3=\check{q}_1^{-\check{\beta}}, \qquad \beta+\beta^\vee=2.
\end{align}

\medskip

Denote $\E_1=\E_1(q_1,q_2,q_3)$ and  $\check{\E}_1=\E_1(\check{q}_1,\check{q}_2,\check{q}_3)$ the quantum toroidal $\gl_1$ algebras with parameters $q_i$ and $\check{q}_i$ respectively. 
Denote $\E_1[K]=\E_1\otimes\C[K^{\pm1}]$  
and 
$\check{\E}_1[\check{K}]=\check \E_1\otimes\C[\check{K}^{\pm1}] 
$ 
the extensions of $\E_1$ and $\check{\E}_1$ by split central group-like invertible elements $K$ and $\check{K}$.

Let
\begin{align*}
\mc K_0=\E_1[K]\otimes\check{\E}_1[\check{K}]/(\psi_0\otimes 1-1\otimes \check{\psi_0}, C\otimes 1-1\otimes \check{C}).
\end{align*}
The algebra $\mc K_0$ is the tensor product of two (commuting) quantum toroidal $\gl_1$ with identified central elements extended by invertible split   invertible group-like central elements $K, \check{K}$.  
The algebra ${\mc K}_0$ plays a role of an affinization of Cartan subalgebra of $U_{s_2}\widehat{\gl}_2$.

\medskip

 Recall function $\omega(x)$, see \eqref{om}. Set
\begin{align*} 
\check{\omega}(x)=\frac{(1-\check{q}_2x)(1-\check{q}_3x)}{(1-x)(1-\check{q}_2\check{q}_3 x)}\,.
\end{align*}

\medskip 

Algebra $\calA_0=\mc A_0(q_1,q_2,q_3)$ is a unital associative subalgebra of $\End\mc{M}$ which is an extension of the algebra $\mc K_0$
by elements $X^{\pm}_{i,k}$, $i,k\in\Z$ satisfying three groups of relations. To write the relations, we collect the extra generators into the generating series
\begin{align}\label{X series}
&X_i^\pm(z)=\sum_{k\in\Z}X_{i,k}^\pm z^{-k}.
\end{align}
The first group of relations is between $X_i^\pm(z)$ and $\mc K_0$. (We assign names to the relations which are given in parenthesis.) 
\begin{align*}
&\psi_0,C \text{  are central,}\\
(KX^+_i):\quad &KX^+_i(z)K^{-1}=q_2^{\beta/2}X^+_i(z)\,,\quad 
 KX^-_i(z)K^{-1}=q_2^{-\beta/2}X^-_i(z)\,,
\\
(\check{K}X^+_i):\quad &\check{K}X^+_i(z)\check{K}^{-1}=\cq_2^{\cb/2}X^+_i(z)\,,\quad 
\check{K}X^-_i(z)\check{K}^{-1}=\cq_2^{-\cb/2}X^-_i(z)\,,
\end{align*}
\begin{align*}
(eX^+_i):\quad &e(u)X^+_i(z)-\omega^-\Bigl(\frac{z}{u}\Bigr)X^+_i(z)e(u)=
\frac{1}{ s_1^{-1}- s_1}\delta\Bigl(\frac{z}{u}\Bigr)X^+_{i-1}(q_1z)\,,
\\ 
(fX^+_i):\quad &[f(u),X^+_i(z)]=\frac{1}{s_1^{-1}- s_1}
\delta\Bigl(\frac{q_1^{-1}z}{C u}\Bigr)X^+_{i+1}(q^{-1}_1z)\psi^+(u)\,,
\\
(\psi X^+_i):\quad &\psi^\pm(u)X^+_i(z){\psi^\pm(u)}^{-1}=\omega^\pm\Bigl(\frac{z}{C^{(1\pm 1)/2} u}\Bigr)X^+_i(z)\,,
\end{align*}
\begin{align*}
(\check{e}X^+_i):\quad &\check{e}(u)X^+_i(z)-\check{\omega}^-\Bigl(\frac{ q_1^{i}z}{u}\Bigr)
X^+_i(z)\check{e}(u)=
\frac{1}{\check{s}_1^{-1}-s_1}
\delta\Bigl(\frac{q_1^{i}z}{u}\Bigr)X^+_{i-1}(z)\,,
\\ 
(\check{f}X^+_i):\quad &[\check{f}(u),X^+_i(z)]=\frac{1}{\check{s}_1^{-1}-\check{s}_1}
\delta\Bigl(\frac{q_1^{i+1}z}{Cu}\Bigr)
X^+_{i+1}(z)\check{\psi}^+(u)\,,
\\
(\check{\psi}X^+_i):\quad&\check{\psi}^\pm(u)X^+_i(z){\check{\psi}^\pm(u)}^{-1}
=\check{\omega}^\pm\Bigl(\frac{q_1^{i} z}{C^{(1\pm1)/2}u}\Bigr)X^+_i(z)\,,
\end{align*}
\begin{align*}
(eX^-_i):\quad &[X^-_i(z),e(u)]=\frac{1}{s_1^{-1}-s_1}\delta\Bigl(\frac{z}{C q_1u}\Bigr) 
\psi^-(u) X^-_{i-1}(q_1^{-1}z)\,,
\\ 
(fX^-_i):\quad &X^-_i(z)f(u)-\omega^+\Bigl(\frac{z}{u}\Bigr)f(u)X^-_i(z)=\frac{1}{ s_1^{-1}- s_1}
\delta\Bigl(\frac{z}{u}\Bigr)X^-_{i+1}(q_1z)\,,
\\
(\psi X^-_i):\quad &{\psi^\pm(u)}^{-1}X^-_i(z)\psi^\pm(u)=\omega^\pm\Bigl(\frac{z}{C^{(1\mp 1)/2}u}\Bigr)X^-_i(z)\,,
\end{align*}
\begin{align*}
(\check{e}X^-_i):\quad &[X^-_i(z),\check{e}(u)]
=\frac{1}{\check{s}_1^{-1}-\check{s}_1 }
\delta\Bigl(\frac{q_1^{1-i}z}{Cu}\Bigr)
\check{\psi}^{-}(z)X^-_{i-1}(z)\,,
\\ 
(\check{f}X^-_i):\quad &X^-_i(z)\check{f}(u)-\check{\omega}^+\Bigl(\frac{q_1^{-i}z}{u}\Bigr)
\check{f}(u)X^-_i(z)=
\frac{1}{\check{s}_1^{-1}-\check{ s}_1}
\delta\Bigl(\frac{ q_1^{-i}z}{u}\Bigr)X^-_{i+1}(z)
\,,
\\
(\check{\psi} X^-_i):\quad &{\check{\psi}^\pm(u)}^{-1}X^-_i(z)\check{\psi}^\pm(u)
=\check{\omega}^\pm\Bigl(\frac{q_1^{-i}z}{C^{(1\mp1)/2}u}\Bigr)X^-_i(z)\,.
\end{align*}

\medskip

The second group of relations is between $X_i^{\pm}(z)$ and $X_j^{\pm}(w)$.

\noindent 
\begin{align*}
&(z-q_2w) X^+_i(z)X^+_i(w)+
(w-q_2z) X^+_i(w)X^+_i(z)
=0\,,\\
&(q_2z-w) X^-_i(z)X^-_i(w)+
(q_2w-z) X^-_i(w)X^-_i(z)
=0\,,
\end{align*}
\begin{align*}
\prod_{r=1}^{j-i-1}(z-q_1^{r}w)
X^+_i(z)X^+_j(w)+(-1)^{j-i}q_2 
\,
p^+_{j-i}(z/w)
\prod_{r=0}^{j-i-2}(q_3^{-1}q_1^{r}w-z)
 X^+_j(w)X^+_i(z)=0,\\
\prod_{r=1}^{j-i-1}(w-q_1^{r}z)
X^-_i(z)X^-_j(w)+(-1)^{j-i}q_2 
\,
p^-_{j-i}(w/z)
\prod_{r=0}^{j-i-2}(q_3^{-1}q_1^{r}z-w)
 X^-_j(w)X^-_i(z)=0,\\
\end{align*}
where $j>i$ and
\begin{align*}
p_s(u)=
\prod_{r=0}^{s}\frac{q_2^{-1}q_1^{r}-u}{q_1^r-u}.
\end{align*}

The third and last group of relations is
between $X_i^{\pm}(z)$ and $X_j^{\mp}(w)$.

\begin{align}
&[X^+_i(z),X^-_j(w)]=
\label{EF}
\\
&
\sum_{r+\check{r}=i+j\atop r,\check{r}\in\Z_{\ge0}} \psi_0^{-i}
\delta\Bigl(\frac{C w}{q_1^rz}\Bigr) 
\, k_r^+(q_1^{-r}w)
\check{k}_{\check{r}}^+(\check{ q}_1^{-\check{r}} q_1^{-j}w)   -\sum_{r+\check{r}=-i-j\atop r,\check{r}\in\Z_{\ge0}} \psi_0^{-j}
\delta\Bigl(\frac{C z}{q_1^rw}\Bigr)
\,k^-_r( q_1^{-r}z) 
\check{k}_{\check{r}}^-(\check{q}_1^{-\check{r}} q_1^iz)    
\,,\nn
\end{align}
where $k_r^\pm(z), \check k_r^\pm(z)$ are given by \eqref{k} and the same formulas with checks.

\medskip

\medskip 
Algebra $\mc A_0$  has a homogeneous $\Z$-grading which we call degree and denote by $\deg$. The degrees of elements in $\mc K_0$ are defined  by\eqref{hom grading} and same formula for $\check{\E}_1$. In addition, we set
\begin{align}\label{hom grading 3}
\deg 
X_{i,k}^\pm=k.
\end{align}

Algebra $\mc A_0$  has another $\Z$-grading which we call weight and denote by $\wt$. The weights of elements in $\mc K_0$ are zero. In addition, we set
\begin{align}\label{weight 3}
\wt
X_{i,k}^\pm=\pm 1.
\end{align}

An $\mc A_0$ module $W$ is 
said to be graded if 
$W=\oplus_{d_1,d_2\in\Z} W_{d_1,d_2}$,  $\dim W_{d_1,d_2}<\infty$, and for any $x\in \mc A_0$ of degree $a$ and weight $b$ we have  $x W_{d_1,d_2}\subset W_{d_1+a,d_2+b}$.

An $\mc A_0$ module $W$ is 
said to be admissible if it is graded and  for each $d_2\in \Z$ there exists $d$ (depending on $d_2$) such that such that $W_{d_1,d_2}=0$ for all $d_1>d$. 

Note that an admissible $\mc A_0$ module is a direct sum of admissible $\mc K_0$ modules.

\begin{rem}\label{waiver remark}{\rm
We note one more time that, similarly to vertex operator algebras,  $\mc A_0$ is not an algebra in the usual sense as relations \eqref{EF} involve infinite sums and make sense only as operators on an admissible module. Therefore, we effectively study only sets of operators satisfying the relations of $\mc A_0$ and acting on admissible modules. The same applies to algebras $\mc A_N$ in Section \ref{shifted sec}. \qed}
\end{rem} 

\begin{rem}\label{expand remark}{\rm 
We make a remark which seems to be ignored in some texts. 

Let $a(z)=\sum_{k\in\Z} a_kz^{-k}$ and  $b(z)=\sum_{k\in\Z} b_k z^{-k}$ be two currents acting on a graded vector space $\mc M$. Assume that the action is graded, $\deg a_k=\deg b_k=k$ for all $k$, and ``admissible", for any vector $w\in \mc M$, $a_kw=b_kw=0$ for large enough $k$. Consider a relation of the form $$g_1(z,w)a(z)b(w)=g_2(w,z)b(w)a(z),$$
where $g_i(z,w)$ are some homogeneous rational functions.

Suppose that $a(z)$ is a one sided current: $a_k=0$ for $k\gg 0$. Then  all matrix coefficients of the current $a(z)b(w)$ are Laurent polynomials. In particular,
if $g_1,g_2$ are homogeneous polynomials of $z,w$ and  $g(u)$ is a non-zero rational function, then the relation above is equivalent to the relation $g^+(z/w)g_1(z,w)a(z)b(w)=g^+(z/w)g_2(w,z)b(w)a(z).$

However, if both currents $a(z)$, $b(z)$ are two sided then a relation cannot be multiplied or divided without changing the meanings. For example, 
consider the relation 
$$
a(z) b(w)=\frac{w-q_2z}{z-q_2w}\, b(w) a(z).
$$
Then to make sense we need to expand the rational function in the right hand side as a series. To avoid infinite summation this expansion must be in powers of $z/w$, that is in the region $|z|\ll|w|$. Now, this relation implies the matrix coefficients of $a(z)b(w)$ are Laurent polynomials and, in particular, $b(w)a(q_2w)=0$. In contrast, the relation 
$$
(z-q_2w)a(z) b(w)=(w-q_2z) b(w) a(z)
$$
does not imply that $b(w)a(q_2w)=0$ and in fact  $b(w)a(q_2w)$ can be non-zero while $a(z)b(w)$ can have a pole at $z=q_2w$. To divide by $(z-q_2w)$, one must include the term $\delta(q_2w/z)$ multiplied by the residue 
$\res_{z=q_2w}a(z)b(w)dz/z$.

For example, if $\mc M$ is an admissible $\mc A_0$ module, then relation $(eX^+_i)$ says the following.
\begin{enumerate}
\item As rational functions
\begin{align*} 
&e(u)X^+_i(z)=\omega^-\Bigl(\frac{z}{u}\Bigr) X^+_i(z)e(u)\,.
\end{align*}
That means that all matrix coefficients of either side can be summed up to rational functions and the two rational functions coincide. 

\item The only pole of either side in $u\in \C^\times$ is
a simple pole $u=z$. Note that as a series the two sides are expanded in opposite direction resulting the delta function term in relation $(eX^+_i)$.

\item The residue of either side is
\begin{align*}
\Res_{u=z}e(u)X^+_i(z)\frac{du}{u}=\Res_{u=z} \omega^-\Bigl(\frac{z}{u}\Bigr) X^+_i(z)e(u) \frac{du}{u}= \frac{1}{s_1^{-1}-s_1}X^+_{i-1}(q_1z)\,. 
\end{align*}
\end{enumerate}

Finally, we note that everywhere in this paper in the relations between two-sided currents, we expand $g_1(z,w)$ in powers of $w/z$ and $g_2(w,z)$ in powers of $z/w$. However, we decided to carry pluses and minuses which indicate the direction of expansion explicitly to remember about that issue.} \qed
\end{rem}

The first group of relations may look cumbersome, but it has the following natural interpretation. 

The left and right adjoint actions ${\rm ad}$ and  ${\rm ad}^R$ of algebra $\E_1$  on $\mathcal A_0$ are given by 
\begin{align*}
    {\rm ad}(x) A= \sum x'AS_1(x''), \qquad {\rm ad}^R(x) A= \sum S_1(x')Ax'',
\end{align*}
where $A\in{\mathcal A}_0$, $x\in\E_1$, and $\Delta_1(x)=\sum x'\otimes x''$. 

Recall the left and right vector representations $V_1(z)$, $V_1^R(z)$ of $\E_1$, see \eqref{vector rep}. 

Then relations $(\Psi X^+_i)$, $(eX^+_i)$,$(fX^+_i)$ mean that the map $V_1(z)\to \mathcal A_0$ mapping $\ket{ i,z }
\mapsto X_{-i}^+(q_1^iz)$ 
is a map of left $\E_1$ modules.
Similarly, relations $(\Psi X^-_i)$, $(eX^-_i)$,$(fX^-_i)$ mean that the map $V_1^R(z)\to \mathcal A_0$ mapping $\ket{ i,z } \mapsto X_{i}^-(q_1^iz)$ 
is a map of right $\E_1$ modules.

Algebra $\mathcal A_0$  is also a left and right $\check{\E}_1$ module under left and right adjoint actions. 
Then relations $(\check{\Psi} X^+_i)$, $(\check{e}X^+_i)$,$(\check{f}X^+_i)$ mean that the map $\check{V}_1(z)\to \mathcal A_0$ mapping 
$\ket{ i,z }^\vee 
\mapsto X_{-i}^+(z)$ is a map of left $\check{\E}_1$ modules.
Similarly, relations $(\check{\Psi} X^-_i)$, $(\check{e}X^-_i)$,$(\check{f}X^-_i)$ mean that the map $\check{V}^R_1(z)\to \mathcal A_0$ mapping 
$\ket{ i,z }^\vee \mapsto X_{i}^-(z)$   
is a map of right $\check{\E}_1$ modules.

\begin{rem} {\rm In quantum affine algebra $U_{s_2}(\widehat{\mathfrak{gl}}_2)$, there are relations of the form $[H_{1,-1},E_i]=b_iE_{i-1}$ and $[H_{1,1},E_i]=b_i'E_{i+1}$, where $b_i, b_i'$ are constants depending on conventions and central element. We view relations $(eX^+_i)$ and $(fX^+_i)$ as affine analogs of these relations. 

Then the relations $(eX^-_i)$ and $(fX^-_i)$ correspond to commutators $[H_{1,\mp1},F_i]$ which are proportional to $F_{i\mp 1}$. 
The relations $(\check{e}X^\pm_i)$ and $(\check{f}X^\pm_i)$ correspond to commutators with the second set of Cartan generators $H_{0,\pm1}$.

We note that, similarly to Drinfeld new realization of $U_{s_2}(\widehat{\mathfrak{gl}}_2)$, the algebra $\mc A_0$ has no cubic Serre relations.
} \qed
\end{rem}

\subsection{The isomorphisms}
We show that image of $\mc A_0$ is isomorphic to the image of $\E_2$ as graded subalgebras of endomorphisms of an admissible module. We recall that everywhere in this paper parameters $q_i$ of $\mc A_0$ and parameters $\bar q_i$ of $\E_2$ are related via \eqref{q-bar q}.

To continue, we need fractional powers of $\Psi_{0,0}$. So, we extend $\E_2$ with an invertible group-like element $K$. We demand that element $K$ has the same commutation relations with $\E_2$ as $\Psi_{0,0}^{\beta/4}$, where $\beta$ is defined in \eqref{beta} 
We denote the resulting algebra by $\E_2[K]$. 

We warn the reader that $K$ is not central in contrast to $\E_1[K]$. We have an automorphism of $\E_2[K]$ which is identity on $\E_2$ and rescales $K$ by a non-zero scalar.
The action of $K$ can be defined on any admissible $\E_2$ module. For example,  one can set $K=\bar K\Psi_{0,0}^{\beta/4}$ where $\bar K$ is any non-zero constant.

Note that $\Psi_{0,0},\Psi_{1,0}$, and $ K$ commute with subalgebras $\E_1$ and $\check{\E}_1$ of $\E_2$.

Let $\mathcal{M}$ be an admissible $\mc A_0$ module. 
Define a map $\iota:\  \E_2[K] \to \mathcal{A}_0$ given on generators by 
\begin{align*}
\Psi^\pm_1(z) &\mapsto  k_0^\pm(z)\check{k}_0^\pm(z), \qquad\qquad\, \Psi^\pm_0(z) \mapsto \psi_0^{\pm1}\big(k_0^\pm(\bar q_1^{-1}z)\check{k}_0^\pm(\bar q_3^{-1}z)\big)^{-1}, \\
E_1(z) &\mapsto X^+_0(z), \qquad  \qquad \qquad\ \ E_0(z) \mapsto \frac{1}{s_2-s_2^{-1}}\psi_0^{-1}\big(k_0^-(\bar q_1^{-1}z)\check{k}_0^-(\bar q_3^{-1}z)\big)^{-1}X^-_{-1}(\bar q_1^{\,-1}Cz) , \\
F_1(z) &\mapsto \frac{1}{s_2-s_2^{-1}}X^-_0(z),\qquad \quad  
F_0(z) \mapsto 
\psi_0 X^+_1(\bar q_1^{\,-1}Cz) \big(k_0^+(q_1^{-1}z)\check{k}_0^+(\bar q_3^{-1}z)\big)^{-1},
\end{align*} 
and $K\mapsto K$. 

We extend $\iota$ to the fused currents in $\E_2$.
\begin{thm}\label{iota thm}
    The map $\iota$ is a well-defined homomorphism of graded algebras. Moreover, it maps
    \begin{align*}
e(z) \mapsto e(z), \quad \check{e}(z)\mapsto \check{e}(z), \quad  f(z) \mapsto f(z), \quad \check{f}(z)\mapsto \check{f}(z).
\end{align*}
The extended map $\iota$ is surjective.
\end{thm}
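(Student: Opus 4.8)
\textbf{Proof proposal for Theorem \ref{iota thm}.}

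The plan is to verify that $\iota$ respects each defining relation of $\E_2[K]$, using the properties of the fused currents $k_r^\pm(z)$, $\check k_r^\pm(z)$ established in Section \ref{gl1 sec}, together with Propositions \ref{gl1 sub prop}, \ref{commute prop} and the relations of $\mc A_0$. First I would record the images of the fused currents: since $\Psi^\pm_1(z)\mapsto k_0^\pm(z)\check k_0^\pm(z)$ and $\Psi^\pm_0(z)\mapsto\psi_0^{\pm1}(k_0^\pm(\bar q_1^{-1}z)\check k_0^\pm(\bar q_3^{-1}z))^{-1}$, the definition $e(q_1^{-1}z)=a_{12}E_1(z)E_0(\bar q_3z)$ and the recursion $(eX^+_i)$ (which is exactly the $\E_1$-module structure on $\langle X^+_{-i}(q_1^iz)\rangle$) force $\iota(e(z))$ to be built from $X^+_0(z)$ and $X^+_{-1}(q_1z)$; the claim $\iota(e(z))=e(z)$ (the fused current already inside $\mc A_0$ via $\E_1\subset\mc K_0$) should follow by matching $X^+_{-1}$ against the $r=1$ case of \eqref{k} and Lemma \ref{X simpl}-type identities, and similarly for $\check e,f,\check f$ using Proposition \ref{second way prop}. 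Once these four identities are in hand, the relations among $e,f,\psi^\pm$ (resp.\ checked) inside $\E_2$ are automatically satisfied because they hold inside $\E_1$ (resp.\ $\check\E_1$) by Proposition \ref{gl1 sub prop}, and the $\E_1$--$\check\E_1$ cross relations hold by Proposition \ref{commute prop}.

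Next I would check the relations that genuinely mix the two ``Borel halves''. The $[E_1(z),F_1(w)]$ relation of $\E_2$ becomes, under $\iota$, the statement $[X^+_0(z),X^-_0(w)]=(s_2-s_2^{-1})^{-1}(\delta(Cw/z)\Psi^+_1(w)-\delta(Cz/w)\Psi^-_1(z))$ up to the normalization in $F_1\mapsto(s_2-s_2^{-1})^{-1}X^-_0$; this is precisely the $i=j=0$ case of \eqref{EF}, since for $i=j=0$ only $r=\check r=0$ survive and $k_0^+(w)\check k_0^+(w)=\iota(\Psi^+_1(w))$. The relation $[E_0(z),F_0(w)]$ similarly reduces to the $i=j=1$ case of \eqref{EF} after inserting the Cartan prefactors and using $(\psi X^\pm_i)$, $(\check\psi X^\pm_i)$ to move the $k_0^\pm$, $\check k_0^\pm$ factors past the $X^\pm$; the mixed commutators $[E_1(z),F_0(w)]=[E_0(z),F_1(w)]=0$ should follow from the $i\ne j$ cases of \eqref{EF} together with the delta-function supports not matching (the central charges $Cz/q_1^r w$ cannot equal $1$ in the relevant range), or more directly from the $X^+_i X^-_j$ cross relations being those of $\mc A_0$. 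The $g_{ij}$-type quadratic relations between $E_i$'s, the $\Psi$--$E$ and $\Psi$--$F$ relations, and the Serre relations then need to be pulled back: the $E_1$--$E_1$ and $E_0$--$E_0$ quadratic relations come from the $X^+_i$--$X^+_i$ relation $(z-q_2w)X^+_i(z)X^+_i(w)+(w-q_2z)X^+_i(w)X^+_i(z)=0$ of $\mc A_0$ (with $i=0$, resp.\ after conjugating by Cartan for $E_0$), the $E_0$--$E_1$ relation from the $j=i+1$ case of the $X^+_iX^+_j$ relation combined with $(\check e X^+_i)$-type moves, and the two cubic Serre relations of $\E_2$ from the absence of Serre relations in $\mc A_0$ --- i.e.\ they must be \emph{derived} as consequences of the quadratic $X^+$-relations and the $(\check eX^+_i)$, $(eX^+_i)$ recursions, which is the same mechanism by which Proposition \ref{commute prop} holds (the remark after that proposition emphasizes the Serre relations are essential, so conversely the commuting $\E_1,\check\E_1$ structure must encode them).

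For well-definedness on the extension, the relations of $K$ with $\E_2$ are those of $\Psi_{0,0}^{\beta/4}$, and $\Psi_{0,0}=\Psi_0=\psi_0$ (central), so $K$ must commute with everything except scale $E_i,F_i$; this matches $(KX^+_i)$, $(KX^-_i)$ after checking the exponents $q_2^{\pm\beta/2}$ against $\beta/4\cdot$(weight$\times$something) --- a short bookkeeping check with \eqref{beta}. For gradedness, assign $\deg$ and $\wt$ as in \eqref{hom grading 3}, \eqref{weight 3} and observe $\iota$ sends degree-$k$, weight-$b$ generators of $\E_2[K]$ to degree-$k$, weight-$b$ elements of $\mc A_0$ (the spectral-shift prefactors $\bar q_1^{-1}Cz$ etc.\ do not change degree). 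Finally, surjectivity of the extended $\iota$: $e(z),\check e(z),f(z),\check f(z)$ are in the image by the displayed formulas, hence all of $\E_1,\check\E_1\subset\mc K_0$ is hit except the group-like $K,\check K$; $X^+_0(z)=\iota(E_1(z))$ and $X^-_0(z)=(s_2-s_2^{-1})\iota(F_1(z))$ are in the image, and then all $X^\pm_i(z)$, $i\in\Z$, are obtained from $X^\pm_0(z)$ by the recursions of Corollary \ref{inductive X cor} (valid for all $i$, as promised there and used via $\iota(E_0),\iota(F_0)$ to produce $X^+_1,X^-_{-1}$, then iterating with $e,\check e,f,\check f$); one also needs $K=\iota(K)$ and $\check K$ recovered from $k_0^-(0)^{-1}$-type normalizations together with $\psi_0=\check\psi_0$, so $\mc K_0$ is fully in the image and hence $\iota$ is onto.

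The main obstacle I expect is \emph{deriving the cubic Serre relations of $\E_2$ from the quadratic $X^+$-relations and the $(\check eX^+_i)$/$(eX^+_i)$ recursions of $\mc A_0$}: unlike the other relations, which are term-by-term matches, this requires the same delicate argument that underlies Proposition \ref{commute prop}, and verifying it directly in the $\mc A_0$ presentation --- rather than quoting $\E_2$ --- is the one genuinely nontrivial computation. A secondary technical point is justifying that all these identities, which involve the infinite fused-current sums in \eqref{EF} and the $k_r^\pm$, make sense as operator identities on admissible modules (Remarks \ref{waiver remark}, \ref{expand remark}); I would handle this uniformly at the start by noting admissibility makes every relevant product a well-defined operator with rational matrix coefficients.
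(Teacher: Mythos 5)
Your overall plan (push each defining relation of $\E_2[K]$ through $\iota$, then deduce surjectivity by producing $e,f,\check e,\check f$ in the image) is the paper's plan in outline, but the proposal has a genuine gap at its hardest point and misassigns the mechanism in several places. The main gap is the Serre relations: you rightly single them out as the one nontrivial step, but you do not carry out the verification, and the route you indicate (deriving them from the quadratic $X^+$--$X^+$ relations together with the $(eX^+_i)$, $(\check eX^+_i)$ recursions) cannot work as stated, because under $\iota$ the current $E_0(z)$ is not an $X^+$ current at all --- it is a Cartan-dressed $X^-_{-1}$. Hence the cubic Serre relation for $E_1,E_1,E_0$ becomes a mixed $X^+X^+X^-$ identity, and the essential input is the commutator relation \eqref{EF} (in the case $i=0$, $j=-1$), which produces the delta-function terms proportional to $e$ and $\check e$; these are then absorbed using $(eX^+_i)$ and an explicit rational-function identity. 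The paper also first replaces the quartic Serre relations by the cubic ones \eqref{serre}, quoting Lemma 2.1 of \cite{FJMM3}, a reduction your sketch does not mention. Without this computation the homomorphism property is simply not established.

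There are also secondary, but symptomatic, misassignments. The relations $[E_1(z),F_0(w)]=0$ and $[E_0(z),F_1(w)]=0$ are not ``$i\neq j$ cases of \eqref{EF} with non-matching delta supports'': under $\iota$ the first involves $X^+_0$ together with $X^+_1$, and the second $X^-_{-1}$ together with $X^-_0$, so they are governed by the second group of relations of $\mc A_0$ (plus the $(\psi X^\pm_i)$, $(\check\psi X^\pm_i)$ conjugations), while $[E_0,F_0]$ reduces to \eqref{EF} with $(i,j)=(1,-1)$, not $i=j=1$. Likewise, the identity $\iota(e(z))=e(z)$ is not forced by the $(eX^+_i)$ recursion and Lemma \ref{X simpl}: in the paper it is obtained by evaluating the fused product $\iota\bigl(E_1(w)E_0(\bar q_3 w)\bigr)$ at the fusion point via \eqref{EF}, which yields $-\psi_0\,k_1^-(q_1^{-1}w)\check k_0^-(w)$, and then cancelling the Cartan prefactor using $k_1^-(z)=c_1 k_0^-(z)e(z)$; this computation is exactly what legitimizes your surjectivity step (where you take the displayed identities for granted), so it must be proved, not assumed. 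The remaining items --- grading, the $K$-relations, and the brute-force checks of the quadratic relations such as $\Psi_0^+$--$\Psi_0^-$ and $\Psi_0^-$--$E_1$ --- are consistent with what the paper does.
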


We prove Theorem \ref{iota thm} in Appendix, see Section \ref{app iota sec}.

Theorem \ref{iota thm} says that if $\mc M$ is an admissible $\mc A_0$ module then it has a structure of admissible $\E_2$ module such that the images of $\mc E_2$ extended by fusion currents and of $\mc A_0$ in $\End \mc M$ coincide.

\medskip

We also have the inverse map mapping currents to the currents with the same names. More precisely, let $\mathcal{M}_2$ be an admissible $\E_2$ module. Let $\tilde \E_2^{\mc M_2}[K]$ be the image of $\E_2[K]$ in  $\End\mathcal{M}_2$ extended by fusion currents.
Define a map $\tau:\ \mc A_0 \to \tilde \E_2^{\mc M_2}[K]$ given on generators by 

\begin{align*}
e(z) \mapsto e(z), \quad \check{e}(z)\mapsto \check{e}(z), \quad  f(z) \mapsto f(z), \quad \check{f}(z)\mapsto \check{f}(z),\\
K\mapsto K, \quad \check{K}\mapsto K^{-1}\Psi_{0,0},\quad C\to C, \quad X^\pm_i(z) \mapsto X^\pm_i(z).
\end{align*}

\begin{thm}\label{tau thm}
    The map $\tau$ defines a structure of admissible $\mc A_0$ module on $\mc M_2$. Moreover, the map $\tau$ is graded and the image of $\tau$ contains $\E_2$.
\end{thm}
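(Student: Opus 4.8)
The plan is to verify directly that the operators produced by $\tau$ satisfy all the defining relations of $\mc A_0$, using only facts already established in the excerpt: Propositions \ref{gl1 sub prop} and \ref{commute prop} (so that $e,f,\psi^\pm$ and $\check e,\check f,\check\psi^\pm$ generate commuting copies of quantum toroidal $\gl_1$), Lemma \ref{X simpl}, Proposition \ref{second way prop}, Corollary \ref{inductive X cor}, and the $k$-current identities of Lemma \ref{k identity lemma} together with Lemma \ref{lem:DeltaK}. The first step is bookkeeping: the operators $e(z),f(z),\psi^\pm(z)$ are fused currents on the admissible module $\mc M_2$, hence their Fourier coefficients lie in $\widetilde\E_2^{\mc M_2}[K]$, and likewise for the checked currents and for $X^\pm_i(z)$ as defined in Section \ref{gl2 sec}; so $\tau$ lands where claimed. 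One also checks the element $\check K=K^{-1}\Psi_{0,0}$ is group-like, invertible, and central in the image (it commutes with $\E_1,\check\E_1$ and with the $X^\pm_i$ up to the scalars dictated by the $(\check K X^\pm_i)$ relations — this is where the normalization $\check K = K^{-1}\Psi_{0,0}$ and the identity $q_2=\check q_2=\bar q_2$ are used), and that $\psi_0\otimes1-1\otimes\check\psi_0$ and $C\otimes1-1\otimes\check C$ map to zero, so $\mc K_0$ is respected.

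The bulk of the argument is the three groups of relations. For the first group: the relations $(eX^+_i)$, $(fX^+_i)$, $(\psi X^+_i)$ and their checked counterparts are exactly the statement that $\ket{i,z}\mapsto X^+_{-i}(q_1^iz)$ is a morphism of left $\E_1$-modules from $V_1(z)$ (resp.\ that $\ket{i,z}^\vee\mapsto X^+_{-i}(z)$ is a morphism of left $\check\E_1$-modules from $\check V_1(z)$), as spelled out in the remark following the definition of $\mc A_0$; so I would prove these by checking the three generating relations of the vector representation \eqref{vector rep} directly against the definitions $X^+_{-r}(z)=E_1(q_1^{-r}z)e^{(r)}(q_1^{-r}z)=E_1(z)\check e^{(r)}(z)$ and $X^+_r(z)=f^{(r)}(C^{-1}z)(\psi^{+,(r)}(C^{-1}z))^{-1}E_1(q_1^rz)$, using the $\E_2$ exchange relations among $E_i,F_i,\Psi_i^\pm$ and the recursions of Corollary \ref{inductive X cor}. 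The cases $i=0$ and $i=\pm1$ (the latter via Lemma \ref{X simpl}) serve as the base, and the recursions propagate to all $i$; this is the computational heart and is carried out in Appendix \ref{app tau sec}. The $X^-$ relations are entirely parallel via the $e\leftrightarrow f$ anti-automorphism and $V_1^R$.

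For the second group, the $X^+_i(z)X^+_i(w)$ relation comes straight from the $\gl_2$ relation $g_{11}(z,w)E_1(z)E_1(w)+g_{11}(w,z)E_1(w)E_1(z)=0$ combined with Lemma \ref{level 1 ideal}-type vanishings $E_1(q_1z)\cdots$ inside the fused staircase; the mixed $X^+_i(z)X^+_j(w)$ relation with $j>i$ follows by repeatedly commuting $E_1(w)$ past the domino picture of $X^+_i(z)$ — exactly the computation illustrated in Figures \ref{E1 pic}–\ref{E0 pic} — which produces the polynomial prefactors $\prod(z-q_1^rw)$ and the rational factor $p^+_{j-i}(z/w)$ after cancelling coincident zeros and poles. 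The third group, relation \eqref{EF}, is the genuine obstacle and the place where real work is needed: one computes $[X^+_i(z),X^-_j(w)]$ by expanding both as fused products of $E_1,F_1$ dressed by $\E_1$-currents, applies the $[E_1(z),F_1(w)]$ relation of $\E_2$ once to extract a $\delta$-function, and then must recognize the resulting dressed current as precisely $\psi_0^{-i}k^+_r(q_1^{-r}w)\check k^+_{\check r}(\check q_1^{-\check r}q_1^{-j}w)$ summed over $r+\check r=i+j$; the identification of the Cartan part uses the definitions \eqref{k}, \eqref{k0}, the identities of Lemma \ref{k identity lemma}, and crucially the fact that the two staircases (the one from $\E_1$, via the definition of $X$, and the one from $\check\E_1$, via Proposition \ref{second way prop}) interleave, so both $k$ and $\check k$ factors appear. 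I expect the main difficulty to be precisely this repackaging of the two-sided Cartan currents and the combinatorics of the double sum over $(r,\check r)$, including getting the spectral-parameter shifts $q_1^{-r}$, $\check q_1^{-\check r}q_1^{-j}$ and the central-element insertions $C$ exactly right; the $X^-X^+$ term is handled symmetrically. Finally, once all relations are verified, $\tau$ is a homomorphism, it is visibly compatible with the degree and weight gradings (it sends the homogeneous, weight-$\pm1$ generators $X^\pm_{i,k}$ to operators of degree $k$ and weight $\pm1$), it makes $\mc M_2$ into an $\mc A_0$-module, and since $\tau$ sends $e(z),f(z),\check e(z),\check f(z),X^\pm_i(z)$ to the currents of the same name whose Fourier modes generate all of $\widetilde\E_2^{\mc M_2}[K]\supset\E_2$ — indeed $E_1(z)=X^+_0(z)$, $F_1(z)=(s_2-s_2^{-1})^{-1}X^-_0(z)$, and $E_0,F_0,\Psi_i^\pm$ are recovered from $X^\pm_{\pm1}$ and the $k_0^\pm,\check k_0^\pm$ by inverting the formulas in Lemma \ref{X simpl} and \eqref{k0} — the image of $\tau$ contains $\E_2$, completing the proof.
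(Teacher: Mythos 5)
Your overall plan---verify the defining relations of $\mc A_0$ group by group, seeding the first group with Lemma \ref{X simpl} and propagating with the recursions of Corollary \ref{inductive X cor}, then get surjectivity for free---agrees with the paper for the first group and for the final surjectivity step. The gaps are in the other two groups. For the second group you claim the $X^+_iX^+_j$ relations follow ``by repeatedly commuting $E_1(w)$ past the domino picture \dots after cancelling coincident zeros and poles.'' That argument only yields the relations as identities of rational functions; it does not justify them as the formal-series relations of $\mc A_0$ with the prescribed expansion $p^+_{j-i}(z/w)$, because both $X^+_i(z)$ and $X^+_j(w)$ are two-sided currents, and, as Remark \ref{expand remark} stresses, one cannot multiply, divide, or ``cancel'' zero/pole pairs in such relations without controlling residues. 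The actual content of the paper's argument for this group is exactly the step you omit: an induction (via the pole structure of $e,\check e,f,\check f$ against $X^+_i$ obtained from the first group) showing that $(z-q_2w)X^+_i(z)X^+_i(w)$, $X^+_{i-1}(z)X^+_i(w)$, and generally $\gamma_{i,j}(z,w)X^+_i(z)X^+_j(w)$ with $\gamma_{i,j}$ as in \eqref{gamma} have Laurent-polynomial matrix coefficients. Your proposal contains no substitute for this pole-freeness argument (nor for its $X^-X^-$ analogue).

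For the third group you propose a direct computation: expand $X^+_i$ and $X^-_j$ as dressed fused products, ``apply the $[E_1(z),F_1(w)]$ relation once to extract a $\delta$-function,'' and then recognize the dressed Cartan current. As described this cannot succeed: the right-hand side of \eqref{EF} is a sum of $|i+j|+1$ delta functions supported on distinct loci $Cw=q_1^rz$, each carrying its own factor $k^+_r(q_1^{-r}w)\check k^+_{\check r}(\check q_1^{-\check r}q_1^{-j}w)$ with specific $\psi_0$-powers and $C$-shifts, and these do not arise from a single contraction of $E_1$ with $F_1$; identifying the dressing at each pole is precisely the hard part, which you assert rather than prove. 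The paper proceeds differently: with the first group in hand, it applies $e(u)(\cdot)-\omega^-(z/u)(\cdot)e(u)$ and its checked analogue to the putative relation, uses Lemma \ref{k identity lemma} to raise and lower the index $r$ of the $k$-currents, and compares the two outcomes to run a double induction in $(i,j)$ (Lemma \ref{EF recursion lem}), with the base case $i=j=0$ reducing to the $[E_1,F_1]$ relation of $\E_2$ and a separate bootstrap for the diagonal cases \eqref{i+j=0}, where the recursion alone leaves an undetermined term $\check Y_i(z)$. If you wish to keep your direct route you would have to carry out the full normal-ordering and residue analysis at every pole $Cw=q_1^rz$; otherwise the inductive scheme is the workable path, and your sketch as it stands does not constitute a proof of the third group.
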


We prove Theorem \ref{tau thm} in Appendix, see Section \ref{app tau sec}.

Theorem \ref{tau thm} says that if $\mc M_2$ is an admissible $\mc E_2$ module then it has a structure of admissible $\mc A_2$ module such that images of $\mc E_2$ extended by fusion currents and of $\mc A_0$ in $\End \mc M_2$ coincide.

\section{Algebra $\mc A_N$} \label{shifted sec}
We define and study the shifted version of algebra $\mc A_0$.  We define it only for the case $\psi_0=1$ as it seems that there is no natural way to keep non-trivial $\psi_0$ to preserve various properties, such as Theorem \ref{coproduct thm} below. This is in parallel to shifted quantum toroidal algebras of \cite{N} which are defined at level one.
\subsection{Definition}

Let $N$ be an
integer. We think of $N$ as 
an integral $\slt$ weight.

Let first $N$ be an even integer. 
Let $\mc A_N=\mc A_N(q_1,q_2,q_3)$ be an algebra generated with the same generators and relations as $\mc A_0$ (where $\psi_0=1$) with two changes: algebra $\mc K_0$ is replaced with  algebra $\mc K_N$ given by
\begin{align}\label{K change}
\mc K_N=\E_1[K]\otimes\check{\E}_1[\check{K}]/(\psi_0\otimes 1-1,1\otimes \check{\psi_0}-1, C\otimes 1-q_1^{N/2} 1\otimes \check{C})
\end{align}
and  relations \eqref{EF} are replaced by
\begin{align}
&[X^+_i(z),X^-_j(w)]=
\label{EFshifted}
\\
&
\sum_{r+\check{r}=i+j+N/2\atop r,\check{r}\in\Z_{\ge0}}
\delta\Bigl(\frac{C w}{q_1^rz}\Bigr) 
\, k_r^+(q_1^{-r}w)
\check{k}_{\check{r}}^+(\check{ q}_1^{-\check{r}} q_1^{-j}w) - (-1)^N\sum_{r+\check{r}=-i-j+N/2\atop r,\check{r}\in\Z_{\ge0}}
\delta\Bigl(\frac{C z}{q_1^rw}\Bigr)
\,k^-_r( q_1^{-r}z) 
\check{k}_{\check{r}}^-(\check{q}_1^{-\check{r}} q_1^iz)  
\,.\nn
\end{align}

In other words, we changed the identification of central elements $C, \check{C}$ of $\E_1,\check{\E}_1$ to
\begin{align*}
C=q_1^{N/2}\check C,
\end{align*}
and ``shifted" the relations between $X^+_i(z)$ and $X^-_j(z)$.

For an odd $N$ in addition to \eqref{K change} and \eqref{EFshifted}, we also change the labeling of $X_{i,k}^\pm$. 
Namely, we use generators $X^+_{i+1/2,k}$ and $X_{i,k}^-$, $i,k\in\Z$. Thus, the first index of $X^+$ is a half integer and the first index of $X^-$ is an integer.

\medskip

Note that if $N<0$ then 
$$
[X^+_i(z),X^-_j(w)]=0 \qquad \text{if} \qquad \frac{N}{2}<i+j<-\frac{N}{2}.
$$
We call the case of $N<0$ antidominant and the case of $N>0$  dominant.

The algebra $\mc A_N$ has degree $\deg$ and weight $\wt$ defined in the same way as in $\mc A_0$, see \eqref{hom grading 3} and \eqref{weight 3}. All maps below respect both degree and weight.

The algebra $\mc A_N$ seems to be different from shifted quantum toroidal algebra of \cite{N}.

\medskip 

We finish this subsection by giving a slightly different form of algebras $\mc A_N$ obtained by essentially relabeling of generators.

For even $N$, let $\mc A_N^-$ be the algebra generated by $\mc K_N$ and $X^\pm_{i+1/2}(z)$, where $i\in\Z$, with  the same relations as $\mc A_N$. 

For odd $N$, let $\mc A_N^-$ be the algebra generated by $\mc K_N$ and $X^+_i(z)$, $X^-_{i+1/2}(z)$, where $i\in\Z$, with  the same relations as $\mc A_N$. 

\begin{lem}\label{shift lem}
There is an algebra isomorphism $\mc A_N^{-}\to \mc A_N$ sending
\begin{align*}
X^+_{i}(z)&\mapsto X^+_{i+ 1/2}(z), \qquad  X^-_{i}(z)\mapsto X^-_{i-1/2}(z), \\
e(z)&\mapsto e(z), \hspace{63pt} f(z)\mapsto f(z), \\
\check{e}(z)&\mapsto \check{e}(s_1z), \hspace{54pt} \check{f}(z)\mapsto \check{f}(s_1z).
\end{align*}
\end{lem}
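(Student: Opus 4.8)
The map in the statement --- call it $\phi$ --- is the composition of a relabeling of the generators $X^\pm_i$ with the spectral--parameter shift \eqref{shift iso} applied to the subalgebra $\check{\E}_1\subset\mc K_N$. The plan is first to record what $\phi$ does to all the currents, then to verify that every defining relation is preserved, and finally to invert it. Since \eqref{shift iso} with parameter $a=s_1$ is a Hopf--algebra automorphism of $\E_1(\check q_1,\check q_2,\check q_3)=\check{\E}_1$ fixing $\check C$ and $\check\psi_0$, the assignment $e(z)\mapsto e(z)$, $f(z)\mapsto f(z)$, $\check e(z)\mapsto\check e(s_1z)$, $\check f(z)\mapsto\check f(s_1z)$, $K\mapsto K$, $\check K\mapsto\check K$, $C\mapsto C$ extends (uniquely, using the $[\check e,\check f]$ relation) to a map on $\mc K_N$ under which $\psi^\pm(z)\mapsto\psi^\pm(z)$ and $\check\psi^\pm(z)\mapsto\check\psi^\pm(s_1z)$; this is an automorphism of $\mc K_N$ because none of $\psi_0,\check\psi_0,C,\check C$ is moved, so the relations \eqref{K change} are respected. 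Likewise $\phi(k_r^\pm(z))=k_r^\pm(z)$ and $\phi(\check k_r^\pm(z))=\check k_r^\pm(s_1z)$, directly from the definitions \eqref{k} and the identities for $k_0^\pm$.

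Next I would check that $\phi$ carries each defining relation of $\mc A_N^-$ to a relation of $\mc A_N$, going through the three groups of relations (all identities being understood, as in Remark~\ref{expand remark}, in $\End$ of an admissible module). For the relations involving $e,f,\psi^\pm$ and for $(KX^\pm_i),(\check KX^\pm_i)$ this is immediate: their scalar coefficients do not involve the index, these currents are fixed (or merely rescaled by a fixed scalar, in the case of $\check K$) by $\phi$, and all $X^+$ subscripts shift by the same amount $+\tfrac12$ (all $X^-$ subscripts by $-\tfrac12$), so e.g. $(eX^+_i)$ goes to $(eX^+_{i+1/2})$. For the relations involving $\check e,\check f,\check\psi^\pm$ the only issue is spectral--parameter bookkeeping: each such relation carries a factor $q_1^{i}$, $q_1^{i+1}$, $q_1^{1-i}$ or $q_1^{-i}$ inside the $\delta$--function and the argument of $\check\omega^\pm$; after replacing $\check e(u),\check f(u),\check\psi^\pm(u)$ by their $s_1$--rescalings and substituting $u\mapsto s_1^{-1}u$, these powers of $q_1$ change by exactly $s_1^{\pm1}=q_1^{\pm1/2}$, which is precisely the change produced by the index shift $i\mapsto i\pm\tfrac12$; hence $(\check eX^+_i),(\check fX^+_i),(\check\psi X^+_i)$ go to the corresponding relations of $\mc A_N$ with index $i+\tfrac12$, and similarly for $X^-$ with index $i-\tfrac12$. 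For the $X^\pm X^\pm$ quadratic relations the polynomial and $p^\pm$ prefactors depend only on the difference of the two indices, which is invariant under a uniform shift, so these relations go to themselves. For the commutator \eqref{EFshifted} one uses $(i+\tfrac12)+(j-\tfrac12)=i+j$, so the summation ranges $r+\check r=i+j+N/2$ and $r+\check r=-i-j+N/2$ are unchanged, while the arguments $\check q_1^{-\check r}q_1^{-j}w$ and $\check q_1^{-\check r}q_1^{i}z$ of $\check k^\pm_{\check r}$ acquire exactly the factor $s_1$ supplied by $\phi(\check k^\pm_{\check r}(z))=\check k^\pm_{\check r}(s_1z)$; the sign $(-1)^N$ and the currents $k^\pm_r$ are untouched.

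This proves $\phi$ is a well-defined homomorphism, and since it maps a generating set of $\mc A_N^-$ bijectively onto a generating set of $\mc A_N$, running the same argument with $s_1$ replaced by $s_1^{-1}$ and the index shifts reversed produces a two-sided inverse homomorphism, so $\phi$ is an isomorphism. The odd $N$ case is identical, the only change being which of $X^+,X^-$ carries integer and which half-integer labels. The only genuine work is the spectral--parameter bookkeeping in the $\check e,\check f,\check\psi^\pm$ relations and in \eqref{EFshifted}; I expect this to be the main (and only mildly tedious) obstacle, everything else being formal relabeling.
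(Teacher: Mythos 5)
Your proposal is correct and is essentially the paper's own argument: the map is the identity on $\E_1$ and the spectral-parameter shift \eqref{shift iso} with $a=s_1$ on $\check{\E}_1$ (hence $k_r^\pm(z)\mapsto k_r^\pm(z)$, $\check{k}_r^\pm(z)\mapsto\check{k}_r^\pm(s_1z)$), after which preservation of the relations is exactly the index/parameter bookkeeping you carry out, and invertibility comes from the inverse shift with reversed relabeling. The paper's proof just records these observations and calls the checks straightforward, so your write-up is the same route with the details filled in.
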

\begin{proof} Note that the maps in the lemma restricted to $\E_1$ are identity, while restricted to $\check{\E}_1$ they are the shift of spectral parameter isomorphisms \eqref{shift iso}. In particular, ${k}_r^\pm(z)$ are mapped to ${k}_r^\pm(z)$ and 
$\check{k}_r^\pm(z)$  to $\check{k}_r^\pm(s_1z)$. The checks of relations are straightforward. Clearly, the maps are surjective and invertible. 
\end{proof}

\begin{cor}\label{aut cor}
We have an automorphism $\mc A_N\to\mc A_N$ sending 
\begin{align*}
X^+_{i}(z)&\mapsto X^+_{i+ 1}(z), \qquad  X^-_{i}(z)\mapsto X^-_{i-1}(z), \\
e(z)&\mapsto e(z), \hspace{54pt}  f(z)\mapsto f(z), \\
\check{e}(z)&\mapsto \check{e}(q_1z), \hspace{45pt}  \check{f}(z)\mapsto \check{f}(q_1z).
\end{align*}
\end{cor}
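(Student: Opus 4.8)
The plan is to realize the claimed automorphism as a composition of two copies of (the argument behind) Lemma~\ref{shift lem}: two shifts of the $X^\pm$-index by $1/2$ compose to a shift by $1$, and two rescalings of the checked currents by $s_1$ compose to one by $s_1^2=q_1$.

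First I would record that the proof of Lemma~\ref{shift lem} is symmetric in $\mc A_N$ and $\mc A_N^-$, so the identical verification also produces an isomorphism $\psi\colon\mc A_N\to\mc A_N^-$ sending $X^+_i(z)\mapsto X^+_{i+1/2}(z)$, $X^-_i(z)\mapsto X^-_{i-1/2}(z)$, $e(z)\mapsto e(z)$, $f(z)\mapsto f(z)$, $\check e(z)\mapsto\check e(s_1z)$, $\check f(z)\mapsto\check f(s_1z)$, with the integer/half-integer labeling of the indices switched according to the parity of $N$ exactly as in Lemma~\ref{shift lem}. As there, $\psi$ restricts to the identity on $\E_1[K]$ and to the spectral-shift automorphism \eqref{shift iso} by $s_1$ on $\check\E_1[\check K]$, so that $k^\pm_r(z)\mapsto k^\pm_r(z)$ and $\check k^\pm_r(z)\mapsto\check k^\pm_r(s_1z)$. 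Checking the defining relations of $\mc A_N$ is routine: the relations internal to $\E_1$ and those between the $X^\pm$ and the $\E_1$-currents are covariant under the index shift with no change of spectral parameter; the $X^+$--$X^+$ and $X^-$--$X^-$ relations involve $i,j$ only through $j-i$; and the only relations displaying an index explicitly — those of the types $(\check e X^\pm_i)$, $(\check f X^\pm_i)$, $(\check\psi X^\pm_i)$ and the $[X^+_i,X^-_j]$ relation \eqref{EFshifted} — do so only through powers of $q_1$, so that raising the index by $1/2$ is exactly offset by the rescaling $z\mapsto s_1z$ of the checked currents.

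Then I would set the desired map equal to $\phi\circ\psi\colon\mc A_N\to\mc A_N$, where $\phi\colon\mc A_N^-\to\mc A_N$ is the isomorphism of Lemma~\ref{shift lem}; it is an isomorphism as a composition of two isomorphisms. Tracking the generators through $\psi$ and then $\phi$ gives $X^+_i(z)\mapsto X^+_{i+1/2}(z)\mapsto X^+_{i+1}(z)$, $X^-_i(z)\mapsto X^-_{i-1/2}(z)\mapsto X^-_{i-1}(z)$, with $e(z)$ and $f(z)$ fixed, $\check e(z)\mapsto\check e(s_1z)\mapsto\check e(s_1\cdot s_1z)=\check e(q_1z)$ and $\check f(z)\mapsto\check f(q_1z)$, which is precisely the prescription in the statement.

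I do not expect a genuine obstacle: the only point needing an argument is the existence of $\psi$, and that is the same line-by-line computation as in Lemma~\ref{shift lem}. One could equally bypass $\mc A_N^-$ and verify directly that the map in the statement preserves every defining relation of $\mc A_N$; again only the relations carrying an explicit $q_1^i$ are affected, and there the shift $i\mapsto i+1$ is absorbed into $z\mapsto q_1z$ in the $\check\E_1$-currents — in particular \eqref{EFshifted} is covariant because $i+j$ is left unchanged by $i\mapsto i+1$, $j\mapsto j-1$, while the rescaling of $\check k^\pm_{\check r}$ absorbs the remaining $q_1^{-j}$ and $q_1^{i}$ prefactors.
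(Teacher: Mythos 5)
Your proposal is correct and matches the paper's own (very brief) proof, which says precisely to either verify the relations directly or apply Lemma~\ref{shift lem} twice; your only addition is the sensible remark that the second application requires the mirror isomorphism $\mc A_N\to\mc A_N^-$, obtained by the same line-by-line check as the lemma.
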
 
\begin{proof}
One can either check the corollary directly or apply Lemma \ref{shift lem} twice.
\end{proof}

\begin{rem}{\rm
The automorphism in Corollary \ref{aut cor} is an affine analog of the automorphism of 
$U_q(\widehat{\mathfrak{gl}}_2)$, sending $E(z)\mapsto z E(z)$ and $F(z)\mapsto z^{-1} F(z)$.}
\qed
\end{rem}
\medskip 

We have another useful isomorphism which inverts $\bar q_1$ and exchanges $\E_1$ and $\check{\E}_1$.

\begin{lem}\label{switch iso lem}
There exists an isomorphism of algebras  $\mc A_N( q_1, q_2, q_3)\to\mc A_N(\check{q}_1, \check{q}_2, \check{q}_3)$ mapping 
\begin{align*}
&X_i^\pm(z)\to X_i^\pm(q_1^{\pm i}z), \\ e(z)\mapsto \check{e}(z), \quad
f(z)&\mapsto \check{f}(z),\quad
 \check{e}(z)\mapsto e(z),\quad
\check{f}(z) \mapsto f(z),
\end{align*}
and $C\mapsto \check{C}$, $K\mapsto \check{K}$, $\check{K}\mapsto K$. 
\end{lem}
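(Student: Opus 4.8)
The plan is to check directly that the assignment in the statement respects every defining relation of $\mc A_N(q_1,q_2,q_3)$, with values in $\mc A_N(\check q_1,\check q_2,\check q_3)$, and then to produce an inverse. Two combinatorial facts drive everything: applying the ``check'' operation twice returns the original triple (on the $\E_2$ level it corresponds to the Hopf isomorphism $\E_2(\bar q_1,\bar q_2,\bar q_3)\xrightarrow{\sim}\E_2(\bar q_3,\bar q_2,\bar q_1)$ recalled in Section \ref{gl2 sec}), and $q_1\check q_1=1$, $q_2=\check q_2$. Hence the subalgebra $\E_1=\E_1(q_1,q_2,q_3)\subset\mc A_N(q_1,q_2,q_3)$ is abstractly the same as the subalgebra $\check\E_1\subset\mc A_N(\check q_1,\check q_2,\check q_3)$, and $\check\E_1\subset\mc A_N(q_1,q_2,q_3)$ is abstractly the same as $\E_1\subset\mc A_N(\check q_1,\check q_2,\check q_3)$; so on $e,f,\check e,\check f,K,\check K,C$ the prescribed map is just the identification of $\mc K_N(q_1,q_2,q_3)$ with $\mc K_N(\check q_1,\check q_2,\check q_3)$ after interchanging the two tensor factors. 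The only verification on $\mc K_N$ is that the quotient relation $C=q_1^{N/2}\check C$ of \eqref{K change} goes to the corresponding relation of the target, which holds because $q_1^{N/2}=\check q_1^{-N/2}$.

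Next I would treat the first group of relations. The point is that the spectral rescaling $X^+_i(z)\mapsto X^+_i(q_1^iz)$, $X^-_i(z)\mapsto X^-_i(q_1^{-i}z)$ is calibrated exactly to convert an ``$\E_1$-flavoured'' relation into the corresponding ``$\check\E_1$-flavoured'' one and back. Concretely, applying the map to $(eX^+_i)$ and then substituting $z\mapsto q_1^{-i}z$ reproduces the relation $(\check e X^+_i)$ of $\mc A_N(\check q_1,\check q_2,\check q_3)$, once one uses $\check q_1^{\,i}=q_1^{-i}$; the analogous substitutions handle the pairs $(fX^+_i)\leftrightarrow(\check f X^+_i)$, $(\psi X^+_i)\leftrightarrow(\check\psi X^+_i)$, $(KX^+_i)\leftrightarrow(\check K X^+_i)$, and all their $X^-$-counterparts. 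For the second group the same rescaling leaves the two quadratic $X^+_i(z)X^+_i(w)$ and $X^-_i(z)X^-_i(w)$ relations invariant (the factor $q_1^{\mp i}$ factors out, and $q_2=\check q_2$), and turns the mixed $i<j$ relation for the parameters $q$ into the one for the parameters $\check q$; here one matches the two polynomial prefactors using $q_1\check q_1=1$, $q_2=\check q_2$ and the explicit form of $p_s(u)$.

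The core of the argument is the third group, relation \eqref{EFshifted}. Under the map $k^\pm_r(z)$ is carried to $\check k^\pm_r(z)$ of the target and $\check k^\pm_r(z)$ to $k^\pm_r(z)$ of the target (consistent with the interchange $K\leftrightarrow\check K$ entering $k^\pm_0$ through \eqref{k}), while $C\leftrightarrow\check C$ and the arguments of the delta functions are rescaled by powers of $q_1$. Since the index set $\{(r,\check r)\in\Z_{\ge0}^2:\ r+\check r=i+j+N/2\}$ is symmetric under $r\leftrightarrow\check r$, after substituting $z\mapsto q_1^{-i}z$, $w\mapsto q_1^{-j}w$ the two sums on the right of \eqref{EFshifted} get swapped, and what remains is to check that each spectral argument lands in the right place; this is a bookkeeping computation using $q_1\check q_1=1$ and $C=q_1^{N/2}\check C$ repeatedly. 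I expect this matching in \eqref{EFshifted} to be the only genuinely delicate step; the cleanest presentation is to first rewrite \eqref{EFshifted} in the symmetric form produced by the spectral shift, and only then apply the map. (For odd $N$ the same recipe works verbatim, with $i$ half-integral in the $X^+$-part.)

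Finally, invertibility follows formally: the same construction applied to $\mc A_N(\check q_1,\check q_2,\check q_3)$ is a homomorphism to $\mc A_N(\check{\check q}_1,\check{\check q}_2,\check{\check q}_3)=\mc A_N(q_1,q_2,q_3)$, and the two composites are the identity, since $e\mapsto\check e\mapsto e$, $\check e\mapsto e\mapsto\check e$ (and likewise for $f$), $K\leftrightarrow\check K$, $C\leftrightarrow\check C$, and $X^\pm_i(z)\mapsto X^\pm_i(q_1^{\pm i}z)\mapsto X^\pm_i(\check q_1^{\pm i}q_1^{\pm i}z)=X^\pm_i(z)$. Hence the map is an isomorphism of algebras, automatically respecting $\deg$ and $\wt$.
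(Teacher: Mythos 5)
Your proposal is correct and takes essentially the paper's approach: the paper's entire proof is the one-line ``All the checks are straightforward,'' i.e.\ exactly the direct verification of each group of relations (using $q_1\check q_1=1$, $q_2=\check q_2$, the involutivity of the check operation, and $C=q_1^{N/2}\check C$) together with the formal inverse you describe. One small correction to your description of \eqref{EFshifted}: since the map preserves weight it cannot interchange the two sums on the right-hand side; rather each sum is carried to the corresponding sum of the target with the summation indices $(r,\check r)$ relabelled as $(\check r,r)$ and the (mutually commuting) factors $k^\pm_r$ and $\check k^\pm_{\check r}$ exchanged, and with that reading your bookkeeping of the spectral arguments goes through as claimed.
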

\begin{proof}
All the checks are straightforward.
\end{proof}

\subsection{Representations of $\mc A_N$} 
We prove a theorem which allows us to construct a large family of admissible $\mathcal A_N$ modules for a dominant shift $N>0$. We will continue the study of $\mathcal A_N$ modules in \cite{FJM2}.

An $\mc A_N$ module $W$ is 
said to be graded if 
$W=\oplus_{d_1,d_2\in\Z} W_{d_1,d_2}$,  $\dim W_{d_1,d_2}<\infty$, and for any $x\in \mc A_N$ of degree $a$ and weight $b$ we have  $x W_{d_1,d_2}\subset W_{d_1+a,d_2+b}$.

An $\mc A_N$ module $W$ is 
said to be admissible if it is graded and  for each $d_2\in \Z$ there exists $d$ (depending on $d_2$) such that such that $W_{d_1,d_2}=0$ for all $d_1>d$.

The $\E_1$ Fock representation $\F_1(v)$ of color $1$ can be extended to an admissible $\mc A_1$ module. The same is true for the $\check{\E}_1$ Fock representation $\F_1(v)$ of color $1$. This is the subject of the following lemma. 
\begin{lem}\label{F1 extended lemma}
The $\E_1$ Fock representation $\F_1(v)$ of color $1$ has an admissible $\mc A_1$ module structure such that $X^\pm_i(z)$, $\check{e}(z)$, $\check{f}(z)$ act by zero and $\check{\psi_0},\check{C}, K, \check{K}$ by $1$.

Similarly, the $\check{\E}_1$ Fock representation $\check{\F}_1(v)$ of color $1$ has an admissible $\mc A_1$ module structure such that $X^\pm_i(z)$, $e(z)$, $f(z)$ act by zero and $\psi_0,C, K, \check{K}$ by $1$. 
\end{lem}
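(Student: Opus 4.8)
The plan is to write down the action of $\mc A_1$ explicitly and then verify the relations; the only one requiring genuine work will be the commutator \eqref{EFshifted}. I would define the action on the vector space $\F_1(v)=\C[h_r]_{r<0}$ as follows: let the subalgebra $\E_1$ act by the color-$1$ Fock formulas (so $C=s_1$, $\psi_0=1$, and $h_r,e(z),f(z)$ are the given operators), let the subalgebra $\check\E_1$ act through the counit $\epsilon$ (so $\check e(z)=\check f(z)=0$, $\check\psi^\pm(z)=1$, whence $\check\psi_0=1$, $\check h_r=0$, and $\check C=1$), let every $X^\pm_i(z)$ act by $0$, and set $K=\check K=1$. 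This is consistent with $\mc K_1$ from \eqref{K change}: $\psi_0=1$ and $\check\psi_0=1$ hold, $\psi_0=1$ as is required for $\mc A_N$ with $N\neq0$, and the identification $C=q_1^{1/2}\check C$ becomes $s_1=q_1^{1/2}\cdot1$; moreover $\E_1$ and $\check\E_1$ commute since one factor acts by scalars. All relations of the first group (between $X^\pm_i(z)$ and $\mc K_1$) and of the second group (between two $X$'s) then hold trivially, since every term appearing in them is linear, respectively bilinear, in the currents $X^\pm_j$, all of which act by $0$.

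The remaining content is relation \eqref{EFshifted} with $N=1$, whose left-hand side is $[0,0]=0$; I must check that the right-hand side vanishes on $\F_1(v)$. Because $\check e,\check f$ act by $0$ and $\check K=\check\psi_0=1$, $\check h_r=0$, formulas \eqref{k0 expl} and \eqref{k} (with checks) give $\check k^\pm_0(z)=1$ and $\check k^\pm_{\check r}(z)=0$ for all $\check r\ge1$ on $\F_1(v)$; and by Lemma \ref{k on fock lem} applied in $\F_1(v)$ we have $k^\pm_r(z)=0$ for $r\ge2$ together with the identity $k^\pm_0(s_1z)+k^\mp_1(z)=0$. Hence in the right-hand side of \eqref{EFshifted} only summands with $\check r=0$ and $r\in\{0,1\}$ can be nonzero, which forces the (half-integer) $X^+$-index $i$ and the (integer) $X^-$-index $j$ to satisfy $i+j\in\{-\tfrac12,\tfrac12\}$. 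In each of these two cases exactly two terms remain, one from each sum; using $C=s_1$ and $\delta(x)=\delta(x^{-1})$ one checks their two $\delta$-factors are both supported on the same locus ($z=s_1w$ when $i+j=-\tfrac12$, $w=s_1z$ when $i+j=\tfrac12$), and evaluating the accompanying coefficients on that locus and invoking $k^\pm_0(s_1z)+k^\mp_1(z)=0$ reduces the surviving bracket to $k^\varepsilon_0(\zeta)-k^\varepsilon_0(\zeta)=0$ for the appropriate sign $\varepsilon$ and argument $\zeta$. So \eqref{EFshifted} holds. For admissibility: the module sits in a single weight (all $X$ act by $0$ and $\mc K_1$ has weight $0$), and as a graded vector space it is the Fock space graded by $\deg h_r=r<0$, which is bounded above with finite-dimensional graded components.

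The statement for $\check\F_1(v)$ is proved by the mirror construction: let $\E_1$ act through $\epsilon$ and $\check\E_1$ act by its color-$1$ Fock module (now $C=q_1^{1/2}\check s_1=1$, using $q_1\check q_1=1$), and repeat the computation with the roles of $\E_1,\check\E_1$, hence of $k^\pm_r$ and $\check k^\pm_r$, exchanged; alternatively one transports the first assertion through the isomorphism $\mc A_1(q_1,q_2,q_3)\to\mc A_1(\check q_1,\check q_2,\check q_3)$ of Lemma \ref{switch iso lem}. The only real obstacle is keeping track of the various spectral shifts in \eqref{EFshifted} — the arguments $q_1^{-r}w$ and $\check q_1^{-\check r}q_1^{-j}w$, the central value $C=s_1$, and the shift appearing in Lemma \ref{k on fock lem} — carefully enough to see the pairwise cancellation; there is no conceptual difficulty beyond this bookkeeping.
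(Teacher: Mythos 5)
Your proposal is correct and takes essentially the same route as the paper: the only nontrivial relation is \eqref{EFshifted}, which you dispose of exactly as the paper does, using Lemma \ref{k on fock lem} (vanishing of $k^\pm_r$ for $r\ge 2$ on $\F_1(v)$ and the identity $k_0^\pm(s_1z)+k_1^\mp(z)=0$) so that only the $r\in\{0,1\}$, $\check r=0$ terms with $i+j=\pm\tfrac12$ survive and cancel pairwise, while all other relations are trivial because $X^\pm_i(z)$ act by zero. Your version simply spells out the explicit module structure (counit action of $\check\E_1$, $C=s_1=q_1^{1/2}\check C$), the delta-function bookkeeping, admissibility, and the mirror/twist argument for $\check\F_1(v)$, which the paper leaves implicit.
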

\begin{proof}
The $[X^+_i,X^-_j]$ relation \eqref{EFshifted} follows from Lemma \ref{level 1 ideal}. Indeed, by  Lemma \ref{level 1 ideal}, on $\F_1(v)$ the operators $k^\pm_r(z)$ with $r\ge2$ act as $0$ . 
Therefore, the only non-zero contributions
in the right hand side of \eqref{EFshifted} are terms with 
$r\in\{0,1\}$, $i+j=\pm 1/2$.  In addition, by Lemma \ref{k on fock lem}, on $\F_1(v)$ we have $k_0^\pm(s_1z)+k_{1}^{\mp}(z)=0$.
Relation \eqref{EFshifted} follows.

The other relations are trivial.
\end{proof}
We call $\mc A_1$ modules described in Lemma \ref{F1 extended lemma} the Fock modules of color 1 and  denote them in the same way by $\mc F_1(v)$ and  $\check{\mc F}_1(v)$.

Note that $\check{\mc F}_1(v)$ can be obtained from $\mc F_1(v)$ via a twist by the isomorphism described in Lemma \ref{switch iso lem}.

By analogy with shifted quantum affine algebras, we may expect the existence of a  comultiplication of the form
$$\Delta_{N_1,N_2}:\ \mc A_{N_1+N_2}\to\mc A_{N_1}\otimes \mc A_{N_2},$$ 
such that $\Delta_{N_1,N_2}$ restricted to $\mc E_1$ and $\check{\mc E}_1$ is given by \eqref{E1 coproduct} (and the same formula for $\check{\mc E}_1$).

In this paper we do not give such a map in full generality and restrict ourselves to the case when $N_1=1$ (or $N_2=1$) and the algebra $\mc A_{N_1}=\mc A_1$ (or $\mc A_{N_2}=\mc A_1$) is reduced to its image in either $\End \mc F_1(v)$ or  $\End \check{\mc F}_1(v)$. In these cases the map $\Delta_{N_1,N_2}$ simplifies. We will discuss the issue of comultiplication further in \cite{FJM2}.

\begin{thm}\label{coproduct thm}
Let $\mathcal{M}$ be an admissible representation of $\mathcal{A}_{N}$.

Then the tensor product
${\mc F}_{1}(v)\otimes \mathcal{M}$ 
has a structure of an admissible representation of $\mathcal{A}_{N+1}$ such that the action of $\E_1,\check{\E}_1$ is given  by \eqref{E1 coproduct} and the action of $X^\pm_i(z)$ is given by
\begin{align*}
X^+_i(z)&\mapsto
k_0^-(z)\otimes X^+_{i-1/2}(s_1z)+k_1^-(q_1^{-1}z)\otimes X^+_{i+1/2}(s_1^{-1}z)
\,,\\
X^-_i(z)&\mapsto -1\otimes X^-_i(z)\,.
\end{align*}

Similarly, the tensor product
$\mathcal{M}\otimes\mc F_{1}(v)$ 
has a structure of an admissible representation of $\mathcal{A}_{N+1}^-$ such that the action of $\E_1,\check{\E}_1$ is given  by \eqref{E1 coproduct} and the action of $X^\pm_i(z)$ is given by
\begin{align*}
X^+_i(z)&\mapsto X^+_i(z)\otimes 1\,, \\
X^-_i(z)&
\mapsto X^-_{i+1/2}(s_1z)\otimes k_0^+(z)+X^-_{i-1/2}(s_1^{-1}z)\otimes k_1^+(q_1^{-1}z)
\,.
\end{align*}
\end{thm}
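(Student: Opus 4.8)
The plan is to verify directly that the proposed formulas satisfy all the defining relations of $\mc A_{N+1}$, using that $\mc F_1(v)$ is an $\mc A_1$-module (Lemma \ref{F1 extended lemma}) and that $\mc M$ is an $\mc A_N$-module. First I would treat the relations internal to $\E_1$ and $\check\E_1$: since the action on the tensor product is literally $\Delta_1$ applied to the $\mc A_1$-action on $\mc F_1(v)$ and the $\mc A_N$-action on $\mc M$, these relations hold because $\Delta_1$ is a coproduct on $\E_1$ and $\check\E_1$ (the checked currents act as zero on $\mc F_1(v)$, so the $\check\E_1$ part of $\Delta_1$ degenerates, but the central-element matching in $\mc K_{N+1}$ is exactly designed to make this consistent — note $C=q_1^{(N+1)/2}\check C$ on the product versus $C=q_1^{N/2}\check C$ on $\mc M$ and level $s_1$, i.e.\ $q_1^{1/2}$, on the Fock factor, which balances). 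The grading and admissibility claims are routine once the relations are checked: $\mc F_1(v)$ is admissible and the tensor product of admissible modules with the triangularity of $\Delta_1$ stays admissible.

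Next I would handle the mixed relations between $X^\pm_i(z)$ and the Cartan-type currents of $\E_1$ and $\check\E_1$ (the first big group of relations $(eX^+_i)$, $(fX^+_i)$, $(\psi X^+_i)$, and their checked and minus analogues). Here the strategy is to plug the ansatz into, say, $(eX^+_i)$: one gets $e(u)$ acting via $\Delta_1 e(u)=e(u)\otimes 1+\psi^-(u)\otimes e(C_1u)$ hitting $k_0^-(z)\otimes X^+_{i-1/2}(s_1z)+k_1^-(q_1^{-1}z)\otimes X^+_{i+1/2}(s_1^{-1}z)$. On the first tensor factor one needs the identities of Lemma \ref{k identity lemma} relating $e(u)$ to $k^-_r$, together with the known commutation of $e(u)$ with $k_0^-$ from relation $(eX^+_i)$ at $i$ shifted; on the second factor one uses the $\mc A_N$-relation $(eX^+_{i\mp1/2})$ and $(\psi X^+_{i\mp1/2})$. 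The pieces must recombine into $\tfrac{1}{s_1^{-1}-s_1}\delta(z/u)$ times the shifted ansatz for $X^+_{i-1}(q_1z)$; the delta-function bookkeeping and the spectral-parameter shifts $s_1^{\pm1}$ are exactly arranged so that the $\omega^-$-factors from Lemma \ref{k identity lemma} match $\omega^-(z/u)$ on the nose. The checked relations are easier on $\mc F_1(v)$ since $\check e,\check f$ act by zero there, so $\Delta_1\check e(u)=\psi^-_{\check{}}(u)\otimes\check e(C_1u)+\cdots$ reduces substantially; one still must check the resulting relation on the $\mc M$-factor, which is the $\mc A_N$-relation $(\check e X^+_{i\mp1/2})$ up to the shift $s_1$ built into Lemma \ref{shift lem}'s normalization.

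The relations among the $X^+$'s with themselves (and among $X^-$'s), the quadratic/higher relations with the $p_s$-factors, follow from the $\mc A_N$-relations on $\mc M$ combined with Lemma \ref{level 1 ideal}-type vanishings on $\mc F_1(v)$, which kill the cross terms that would otherwise appear when one expands $X^+_i(z)X^+_j(w)$ using the two-term ansatz; this is the standard mechanism by which tensoring with a level-one Fock module preserves such relations. The genuinely delicate point, and what I expect to be the main obstacle, is the $[X^+_i(z),X^-_j(w)]$ relation \eqref{EFshifted} for $\mc A_{N+1}$: since $X^-$ acts only on the second factor and $X^+$ acts on both, the commutator produces $[\,\cdot\,,X^-_j(w)]$ applied to the two terms of the $X^+_i(z)$ ansatz, and one must show the sum telescopes into the $N{+}1$-shifted sum of $k^+_r\check k^+_{\check r}$ (and the minus counterpart). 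This requires using the $\mc A_N$-relation \eqref{EFshifted} on $\mc M$ to produce $k^\pm_r\otim\check k^\pm_{\check r}$ data on the $\mc M$-factor, then folding in the $k_0^-,k_1^-$ factors on the $\mc F_1(v)$-side via Lemma \ref{lem:DeltaK} (the coproduct of $k^\pm_r$) run backwards, plus the Fock identity $k_0^\pm(s_1z)+k_1^\mp(z)=0$ from Lemma \ref{k on fock lem} to collapse the two Fock-side terms; the combinatorics of the index shift $i{+}j{+}N/2 \rightsquigarrow i{+}j{+}(N{+}1)/2$ and the sign $(-1)^N\to(-1)^{N+1}$ must come out exactly right. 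I would organize this computation by first reducing everything to equalities of rational functions (per Remark \ref{expand remark}), checking the polar part and residues at the relevant $\delta$-supports, since that sidesteps the infinite-sum subtleties. The second statement, for $\mc M\otimes\mc F_1(v)$ landing in $\mc A^-_{N+1}$, is entirely parallel with left/right and $E/F$ roles exchanged, and can alternatively be deduced from the first by the anti-automorphism $e\leftrightarrow f$ composed with Lemma \ref{switch iso lem} and Lemma \ref{shift lem}, which I would mention rather than redo. Full details are deferred to Appendix, Section \ref{coproduct proof sec}.
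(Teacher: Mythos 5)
Your overall strategy (verify all $\mc A_{N+1}$ relations directly on the tensor product, using the $\mc A_1$-structure on $\F_1(v)$ from Lemma \ref{F1 extended lemma} and the $\mc A_N$-structure on $\mc M$) is the paper's strategy, and your treatment of the first group of relations and of \eqref{EFshifted} is essentially right --- in fact \eqref{EFshifted} is the easy part, since $X^-_j(w)$ acts only in the second factor, so the commutator is computed termwise and collapses via Lemma \ref{lem:DeltaK} together with $k_0^\pm(s_1z)+k_1^\mp(z)=0$ from Lemma \ref{k on fock lem}, exactly as you describe.

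The genuine gap is in the second group of relations, which you dismiss in one sentence and which is actually the delicate core of the proof. Your claim that ``Lemma \ref{level 1 ideal}-type vanishings on $\F_1(v)$ kill the cross terms'' is false: expanding $\Delta X^+_i(z)\,\Delta X^+_j(w)$ with the two-term ansatz produces four terms, and the two cross terms, with Fock-side factors $k_0^-\,k_1^-$ and $k_1^-\,k_0^-$ and $\mc M$-side factors $X^+_{i-1}(q_1z)X^+_j(w)$ and $X^+_i(z)X^+_{j-1}(q_1w)$, do \emph{not} vanish; the level-one property only guarantees that $k^\pm_r=0$ for $r\ge2$, i.e.\ that the ansatz has two terms to begin with. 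Moreover, verifying the exchange relation as an identity of rational functions is the easy half; the substantive point is that $\gamma_{i,j}(z,w)\,\Delta X^+_i(z)\Delta X^+_j(w)$ has no extra poles. The diagonal terms are handled by $k_1^-(z)=-k_0^+(s_1z)$ on $\F_1(v)$, but for the cross terms one must commute the Fock-side factors and then prove that
\begin{align*}
\gamma_{i,j}(z,w)\Bigl\{X^+_{i-1}(q_1z)X^+_j(w)+\omega\Bigl(\frac{q_1w}{z}\Bigr)X^+_i(z)X^+_{j-1}(q_1w)\Bigr\}
\end{align*}
is pole-free, which reduces to a nontrivial residue cancellation at $z=w$ (the paper's Lemma \ref{lem:XXoXX}, proved via the recursions of Corollary \ref{inductive X cor}); nothing in your proposed mechanism produces this identity, so as written the argument for the $X^+X^+$ (and $X^-X^-$) relations would fail. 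A secondary, lesser issue: your closing suggestion to deduce the $\mathcal{M}\otimes\F_1(v)$ statement from the first via the $e\leftrightarrow f$ anti-automorphism combined with Lemmas \ref{switch iso lem} and \ref{shift lem} is plausible but unverified; the safe route is the parallel direct check.
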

We prove Theorem \ref{coproduct thm} in Appendix, see Section \ref{coproduct proof sec}.

We recall that algebra $\mc A_{N+1}^-$ is isomorphic to $\mc A_{N+1}$, see Lemma \ref{shift lem}.  For generic $v$, we expect that the $\mc A_{N+1}$ modules 
${\mc F}_{1}(v)\otimes \mathcal{M}$ and
$\mathcal{M}\otimes\mc F_{1}(v)$ are isomorphic, cf. \cite{FJM2}. 

We have a similar statement for $\check{\F}_1(v)$ modules.

\begin{cor}\label{check cor}
Let $\mathcal{M}$ be an admissible representation of $\mathcal{A}_{N}$.

Then the tensor product
$\check{\mc F}_{1}(v)\otimes \mathcal{M}$ 
has a structure of an admissible representation of $\mathcal{A}_{N+1}$ such that the action of $\E_1,\check{\E}_1$ is given  by \eqref{E1 coproduct} and the action of $X^\pm_i(z)$ is given by
\begin{align*}
X^+_i(z)&\mapsto
\check{k}_0^-(q_1^iz)\otimes X^+_{i-1/2}(z)+\check{k}_1^-(q_1^{i+1}z)\otimes X^+_{i+1/2}(z)
\,,\\
X^-_i(z)&\mapsto -1\otimes X^-_i(z)\,.
\end{align*}

Similarly, the tensor product
$\mathcal{M}\otimes\check{\mc F}_{1}(v)$ 
has a structure of an admissible representation of $\mathcal{A}_{N+1}^-$ such that the action of $\E_1,\check{\E}_1$ is given  by \eqref{E1 coproduct} and the action of $X^\pm_i(z)$ is given by
\begin{align*}
X^+_i(z)&\mapsto X^+_i(z)\otimes 1\,, \\
X^-_i(z)&
\mapsto X^-_{i+1/2}(z)\otimes \check{k}_0^+(q_1^{-i}z)+X^-_{i-1/2}(z)\otimes \check{k}_1^+(q_1^{-i+1}z)
\,.
\end{align*}

\end{cor}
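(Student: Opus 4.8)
The plan is to obtain Corollary~\ref{check cor} from Theorem~\ref{coproduct thm} by transporting the latter through the isomorphism of Lemma~\ref{switch iso lem}, which interchanges the two toroidal $\gl_1$ subalgebras. For each $M\in\Z$ write $\phi_M:\ \mc A_M(q_1,q_2,q_3)\xrightarrow{\sim}\mc A_M(\check q_1,\check q_2,\check q_3)$ for that isomorphism; recall $q_1\check q_1=1$, $q_2=\check q_2$, so that $\check s_1=\check q_1^{1/2}=s_1^{-1}$. Since every generator is sent to a generator of the same $\deg$ and $\wt$ (with the argument of $X_i^\pm(z)$ rescaled by $q_1^{\pm i}$ and no rescaling of $e,f,\check e,\check f$), the map $\phi_M$ is bigraded, hence pulling back along $\phi_M$ takes admissible modules to admissible modules. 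Moreover $\phi_M$ sends $\check\E_1\subset\mc A_M(q_1,q_2,q_3)$ isomorphically onto the first toroidal $\gl_1$ subalgebra of $\mc A_M(\check q_1,\check q_2,\check q_3)$ and $\E_1$ onto the second one, without shifting arguments; comparing with the definitions in Lemma~\ref{F1 extended lemma} this identifies $\phi_1^{*}\bigl(\mc F_1(v)\bigr)$ with $\check{\mc F}_1(v)$ on the nose, which is the twist referred to just after Lemma~\ref{switch iso lem}.

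Now let $\mathcal M$ be an admissible $\mc A_N(q_1,q_2,q_3)$-module. Using $\phi_N$, regard $\mathcal M$ as an admissible $\mc A_N(\check q_1,\check q_2,\check q_3)$-module $\widetilde{\mathcal M}$, letting $b$ act as $\phi_N^{-1}(b)$; then $X_j^\pm(w)|_{\widetilde{\mathcal M}}=X_j^\pm(q_1^{\mp j}w)|_{\mathcal M}$, and the first, resp.\ second, toroidal $\gl_1$ subalgebra of $\mc A_N(\check q_1,\check q_2,\check q_3)$ acts on $\widetilde{\mathcal M}$ as $\check\E_1$, resp.\ $\E_1$, acts on $\mathcal M$. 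Applying Theorem~\ref{coproduct thm} in the algebra with parameters $(\check q_1,\check q_2,\check q_3)$, the space $\mc F_1(v)\otimes\widetilde{\mathcal M}$ becomes an admissible $\mc A_{N+1}(\check q_1,\check q_2,\check q_3)$-module on which $\E_1$ and $\check\E_1$ act through \eqref{E1 coproduct} and $X^\pm$ through the formulas of Theorem~\ref{coproduct thm} read with $\check s_1=s_1^{-1}$, $\check q_1^{-1}=q_1$. Pulling this module back along $\phi_{N+1}$ gives an admissible $\mc A_{N+1}(q_1,q_2,q_3)$-module; by the previous paragraph its underlying space is $\check{\mc F}_1(v)\otimes\mathcal M$, with $\check\E_1$ acting as a Fock module on the first tensor factor, and $\E_1$, $\check\E_1$ still acting through \eqref{E1 coproduct}.

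It remains to read off the $X^\pm$ formulas. Because $\phi_{N+1}$ sends $X_i^\pm(z)$ to $X_i^\pm(q_1^{\pm i}z)$, one substitutes $z\mapsto q_1^{\pm i}z$ in the formulas of Theorem~\ref{coproduct thm}: the Cartan factors $k_r^\pm$ of the first $\gl_1$ subalgebra become $\check k_r^\pm$ with unchanged argument, while each $X_{i\mp1/2}^\pm(w)$ sitting on the $\widetilde{\mathcal M}$-side is rewritten as $X_{i\mp1/2}^\pm(q_1^{\mp(i\mp1/2)}w)$ acting on $\mathcal M$. Collecting the three families of shifts — $q_1^{\pm i}$ from $\phi_{N+1}$, the $\check s_1^{\pm1}=q_1^{\mp1/2}$ from Theorem~\ref{coproduct thm}, and $q_1^{\mp(i\mp1/2)}$ from the change of module — one checks that they cancel exactly in the arguments of $X_{i\mp1/2}^\pm$ (leaving $z$) and combine into $q_1^{\pm i}z$ and $q_1^{\pm i+1}z$ in the arguments of $\check k_0^\pm$ and $\check k_1^\pm$. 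This produces the first pair of displayed formulas in Corollary~\ref{check cor}. The statement about $\mathcal M\otimes\check{\mc F}_1(v)$ and $\mc A_{N+1}^-$ follows in the same way from the second half of Theorem~\ref{coproduct thm}, using in addition Lemma~\ref{shift lem} to relate $\mc A_{N+1}^-$ and $\mc A_{N+1}$.

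The only genuine work is the bookkeeping of the last paragraph: one must keep careful track of which copy of $q_1$ — that of the source or that of the target algebra, related by $q_1\check q_1=1$ — enters each spectral shift, and then verify the claimed cancellation. This is routine but a little tedious; there is no conceptual obstacle, which is precisely why this is a corollary of Theorem~\ref{coproduct thm} rather than a separate result.
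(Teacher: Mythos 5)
Your proposal is correct and follows exactly the paper's route: the paper proves the corollary by twisting the modules of Theorem \ref{coproduct thm} by the isomorphism of Lemma \ref{switch iso lem}, which is precisely your argument, just with the spectral-parameter bookkeeping (which does cancel as you claim) written out explicitly.
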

\begin{proof}
 The corollary is obtained from Theorem \ref{coproduct thm} by applying the isomorphism of Lemma \ref{switch iso lem}.
\end{proof}

The algebra $\mc A_0$ has the same admissible modules as $\E_2[K]$ by Theorems \ref{iota thm}, \ref{tau thm} and, therefore, has a rich representation theory, see for example \cite{FJMM2}. Using Theorem \ref{coproduct thm} or Corollary \ref{check cor}, one constructs a large family of representations of $\mc A_N$ for all dominant cases $N>0$. 

In particular, for even $N$, the representation $(\otimes_{i=1}^N \mc F_1(v_i))\otimes \mathcal V$ of algebra $\mc A_N$, where $\mc V$ is the Wakimoto module realized in four bosons, see \cite{S}, appeared in \cite{FJM1} as a rationalization of an extension of the deformed $W$-algebra of type $\gl(N+2|1)$.

\section{Appendix: Proof of Theorem \ref{iota thm}}\label{app iota sec}

\subsection{Relations}
The checks that $\iota$ respects quadratic relations is a brute force computation.

For example, $k_0^\pm (z)=K^\pm \exp(\sum_{i>0}a_{\pm i}^\pm z^{\mp i})$, 
where 
$$[a_r^+,a_{-r}^-
]=\frac{(1-q_2^r)(1-q_3^r)}
{(q^{-r}_1-1)}
\ \frac{C^r-C^{-r}}{r},$$
see \eqref{k0 expl}. In particular, $k_0^+(z)$ has elliptic commutation relations with $k_0^-(w)$. Then the modes of the current $\iota(\Psi_0(z))=k_0^\pm(z)\check{k}_0^\pm(z)$ satisfy
\begin{align*}
[a_r^++\check{a}_r^+,a_{-r}^-+\check{a}_{-r}^-]=\Big(\frac{(1-q_2^r)(1-q_3^r)}{q_1^{-r}-1}+\frac{(1-\check{q}_2^r)(1-\check{q}_3^r)}
{(\check{q}^{-1}_1-1)}\Big)\frac{C^r-C^{-r}}{r}=(q_2^r-q_2^{-r})\frac{C^r-C^{-r}}{r}.
\end{align*}
Therefore, $\iota$ preserves the relation between $\Psi_0^+(z)$ with $\Psi_0^-(w)$.

\newcommand{\Ad}{\mathop{\mathrm{Ad}}}

As another example, we have
\begin{align*}
\Ad\Bigl(\frac{k_0^-(q_1z)\check k^-_0(q_1^2z)}
{k_0^-(z)\check k^-_0(q_1z)}
\Bigr) X_0^+(w)
=\Ad\bigl(\psi^-_0(z){\check\psi_0^-(q_1^2z)}^{-1}\bigr)X^+_0(w)
=\frac{\omega^+(q_1z/w)}{\check{\omega}^+_1(q_1z/w)}
X^+_0(w)\,,
\end{align*}
where $\Ad(A)B=ABA^{-1}$.

Note that $\Ad(K\check{K})X_0^+(w)=q_2X_0^+(w)$. Solving the difference equation we obtain
\begin{align}\label{Xpsi}
\Ad\bigl((k_0^-(z)\check{k}^-_0(q_1z)
)^{-1}\bigr)X^+_0(w)
=q_2\frac{(1-q_1q_3z/w)(1-q_1^2q_3z/w)}{(1-z/w)(1-q_1z/w)}X^+_0(w)\,,
\end{align}
which means that $\iota$ preserves the relation between 
$\Psi_0^-(z)$ 
and $E_1(w)$.

We also have:
\begin{align*}
&(s_2-s_2^{-1})\iota\Big((z-\bar q_1w) (z-\bar q_3w)E_0(z)E_1(w)
- (w-\bar q_1z) (w-\bar q_3z)E_1(w)E_0(z)\Big)
\\
=&\,(z-\bar q_1w) (z-\bar q_3w)
\iota\bigl(\Psi_0^-(z)\bigr) X_{-1}^-(\bar q_1^{-1}Cz)X_0^+(w)
- (w-\bar q_1z) (w-\bar q_3z)X_0^+(w)\iota\bigl(\Psi_0^-(z)\bigr)X_{-1}^-(\bar q_1^{-1}Cz)\\
=&\,(z-\bar q_1w) (z-\bar q_3w)
\iota\bigl(\Psi_0^-(z)\bigr)[X_{-1}^-(\bar q_1^{-1}Cz),\,X_0^+(w)]\\
=&\,\iota\bigl(\Psi_0^-(z)\bigr)
(z-\bar q_1w) (z-\bar q_3w)
\Bigl\{\delta\Bigl(\frac{\bar q_1w}{z}\Bigr)
k_0^-(w)\check k^-_1(\check q_1^{-1}w)
+\delta\Bigl(\frac{\bar q_3w}{z}\Bigr)
k_1^-(q_1^{-1}w)\check k^-_1(w)
\Bigr\}\,=0.
\end{align*}
Here on the second step we used \eqref{Xpsi}.

Other quadratic relations are similar. We only note that
\begin{align*}    
&e(z)k_0^\pm(w)=\omega^\mp(C^{(1\pm1)/2}w/z) 
k_0^\pm(w)e(z),
\\
&k_0^{\pm}(w)f(z)=\omega^\mp\bigl(C^{(1\mp1)/2}w/z\bigr)f(z)k^\pm_0(w).
\end{align*}

\subsection{Serre relations}
The last two relations of $\E_2$ are quartic and called Serre relations. It is known that in admissible modules they are equivalent to cubic relations, see Lemma 2.1 in \cite{FJMM3}:
\begin{align}\label{serre}
    \mathop{\on{Sym}}_{z_1,z_2}\Big\{
    \bar q_1(z_1-\bar q_3 w) (z_2-\bar q_3 w) 
E_i(z_1)E_i(z_2)E_j(w)-
(1+\bar q_1\bar q_3)
(z_1-\bar q_3 w)(\bar q_1 z_2-w)E_i(z_1)E_j(w)E_1(z_2)\nn\\+\bar q_3(\bar q_1z_1-w)(\bar q_1z_2-w)E_j(w)E_i(z_1)E_i(z_2)
    \Big\}=0
\end{align}
and 
\begin{align*}
    \mathop{\on{Sym}}_{z_1,z_2}\Big\{
    \bar q_1(z_1-\bar q_3 w)(z_2-\bar q_3 w) 
F_j(w)F_i(z_1)F_i(z_2)-
(1+\bar q_1\bar q_3)
(z_1-\bar q_3 w)(\bar q_1 z_2-w)F_i(z_1)F_j(w)F_1(z_2)\\+\bar q_3(\bar q_1z_1-w)(\bar q_1z_2-w)F_i(z_1)F_i(z_2)F_j(w)
    \Big\}=0,
\end{align*}
where $i\neq j$, and the relations where the roles of 
$\bar q_1$ and $\bar q_3$ are interchanged.

We apply $\iota$ and check that the result is zero. For example, apply $\iota$ to the left hand side of \eqref{serre} with $i=1$ and $j=0$ and move $k_0^-$, $\check{k}_0^-$ to the left. The result is
\begin{align}\label{aux 2 eq}    
\frac{\big(\psi_0 k_0^-(\bar q_1^{-1}w) \check{k}_0^-(\bar q_3^{-1}w)\big)^{-1} }{s_2-s_2^{-1} }
&\frac{(\bar q_1 z_1-w)(\bar q_1z_2-w)}{(z_1-\bar q_1w)(z_2-\bar q_1w)}\Big(A(z_1,z_2,w)+A(z_2,z_1,w)\Big),
    \end{align}
where   
\begin{align*}
    A(z_1,z_2,w)=&f_1(z_1,z_2,w) X^+_0(z_1)X^+_0(z_2)X^-_{-1}(\bar q_1^{-1}Cw)\\&+f_2(z_1,z_2,w) X^+_0(z_1)X^-_{-1}(\bar q_1^{-1}Cw)X^+_0(z_2)+f_3(z_1,z_2,w) X^-_{-1}(\bar q_1^{-1}Cw)X^+_0(z_1)X^+_0(z_2),
    \end{align*}

\begin{align*}    
   f_1(z_1,z_2,w)= &\bar q_1(\bar q_3z_1-w)(\bar q_3z_2-w),\\f_2(z_1,z_2,w)=&-(1+\bar q_1\bar q_3)(\bar q_3 z_1-w)(z_2-\bar q_1 w),\\ f_3(z_1,z_2,w)=&\bar q_3(z_1-\bar q_1 w)(z_2-\bar q_1w).
\end{align*}

Let 
\begin{align*}
A'(z_1,z_2,w)=
\sum_{i=1}^3f_i(z_1,z_2,w)\cdot X_0^+(z_1)X^+_0(z_2)X^-_{-1}(\bar q_1^{-1}Cw)\,.
\end{align*}
The identity
\begin{align*}
\sum_{i=1}^3f_i(z_1,z_2,w) =-q_2^{-1}(1-q_2^{-1})(z_1-q_2z_2)w
\end{align*}
shows that $A'(z_1,z_2,w)+A'(z_2,z_1,w)=0$.
 
We write 
\begin{align*}
A(z_1,z_2,w)+A(z_2,z_1,w)=A(z_1,z_2,w)-A'(z_1,z_2,w)+ A(z_2,z_1,w)-A'(z_2,z_1,w)
\end{align*}
and simplify using the relation
\begin{align*}
[X^-_{-1}(\bar q_1^{-1}Cw), X^+_0(z)] 
=k_0^-(\bar q_1^{-1}w) \check{k}_0^-(\bar q_3^{-1}w)
\Bigl\{\delta\Bigl(\frac{\bar q_1z}{w}\Bigr)\check c_1\check e(\bar q_3^{-1}w)
+
\delta\Bigl(\frac{\bar q_3z}{w}\Bigr) c_1 e(\bar q_1^{-1}w)
\Bigr\}\,.
\end{align*}
Then up to an overall constant and $\psi_0$, 
\eqref{aux 2 eq} becomes 
\begin{align*}
\delta\Bigl(\frac{\bar q_3z_2}{w}\Bigr)& (\bar q_1z_2-w) 
\times\Bigl\{
-(1+\bar q_1\bar q_3)(z_1-\bar q_3w)
X^+_0(z_1) e(\bar q_1^{-1}w)\\
&+\bar q_3 (\bar q_1z_1-w)
e(\bar q_1^{-1}w)X^+_0(z_1)
+\bar q_3 \frac{z_1-\bar q_3 w}{\bar q_3z_1-w}(z_1-\bar q_1w)
X^+_0(z_1)e(\bar q_1^{-1}w)
\Bigr\}+ (z_1\leftrightarrow z_2).
\end{align*}
In the second term we move $X_0^+(z_i)$ to the left. Note that the delta term in $(eX^+_i)$ disappears due to the presence of prefactor $(\bar q_1z_i-w)$. 
Using the idenitity of raional functions
$$
-(1+\bar q_1\bar q_3)(z-\bar q_3w)+\bar q_3 (\bar q_1z-w)\omega(\bar q_1z/w)+\bar q_3 \frac{z-\bar q_3 w}{\bar q_3z_1-w}(z-\bar q_1w)= 0,
$$
with $z=z_i$ we see that the coefficient of the delta function $\delta(\bar q_3z_i/w)$ vanishes and therefore \eqref{aux 2 eq} is zero.

\subsection{Surjectivity and restriction to $\E_1\otimes \check{\E}_1$.}
Algebra $\mc A_0$ is generated by $K,\check{K}$, $\psi_0$, $C$, $k_0^\pm(z)$, $\check{k}_0^\pm(z)$, $e(z)$, $f(z)$, $\check{e}(z)$, $\check{f}(z)$, and $X_0^\pm(z)$. All of the above except maybe $e(z)$, $f(z)$, $\check{e}(z)$, $\check{f}(z)$  are in the image of $\iota$ for obvious reasons. 

Therefore to prove that $\iota$ is surjective, it is sufficient to show that $\iota$ maps $e(z)\mapsto e(z)$, $f(z)\mapsto f(z)$, $\check{e}(z)\mapsto \check{e}(z)$, $\check{f}(z)\mapsto \check{f}(z)$.

Let us show  $e(z)\mapsto e(z)$. Relation  \eqref{EF} gives
\begin{align*}
(1-\bar q_3 w/z)X_0^+(w)X_{-1}^-(\bar q_1^{-1}Cz)\Bigl|_{z=\bar q_3 w}
=-\psi_0k_1^-(q_1^{-1}w)\check k^-_0(w)\,.
\end{align*}
Hence 
\begin{align*}
\iota(E_1(w)E_0(\bar q_3w)) 
&=\frac{1}{s_2-s_2^{-1}}\iota\bigl(\Psi_0^-(z)\bigr)k_1^-(q_1^{-1}w)\check k^-_0(w)
q_2^{-1}\frac{(1-\bar q_1^{-1}\bar q_3)}{(1-\bar q_1\bar q_3)(1-\bar q_3^2)}
=a_{12}^{-1}e(q_1^{-1}w)\,,
\end{align*}
 which means $\iota\big(e(z)\big)=e(z)$. 
The other three checks are similar.

\section{Appendix: Proof of Theorem \ref{tau thm}. }\label{app tau sec}
In this section we prove Theorem \ref{tau thm}. For that we compute the relations between currents $X_i^\pm$, algebras $\E_1,\check{\E}_1$. All computations in the proof are inside of $\E_2$ acting on admissible module $\mc{M}_2$.

After we know $\tau$ is well-defined, the surjectivity is clear due to Lemma \ref{X simpl}.
\subsection{The first group of relations.}
We start by checking the first group of the relations, that is the adjoint action of $\E_1,\check{\E}_1$ on $X^\pm_i$.

The relations $(\psi^\pm X^\pm_i), (\check\psi^\pm X^\pm_i)$,  $(\psi^\pm X^\mp_i), (\check\psi^\pm X^\mp_i)$ are straightforward.

\begin{lem}\label{lem:seed}
The relations 
$(eX^+_1)$, $(\check eX^+_1)$, $(eX^-_0)$, $(\check eX^-_0)$, 
$(fX^+_0)$, $(\check fX^+_0)$, 
$(fX^-_{-1})$, $(\check fX^-_{-1})$, 
are satisfied.

\end{lem}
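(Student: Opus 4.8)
The plan is to substitute the explicit $\tau$-images of every current that occurs and reduce each of the eight identities to the defining relations of $\E_2$ on the admissible module $\mc M_2$. Here $X^+_0(z)\mapsto E_1(z)$ and $X^-_0(z)$ maps to a scalar multiple of $F_1(z)$ by definition, while Lemma~\ref{X simpl} supplies $X^+_1(Cz)\mapsto F_0(\bar q_1 z)\Psi_0^+(\bar q_1 z)^{-1}$ and $X^-_{-1}(Cz)\mapsto (s_2-s_2^{-1})\Psi_0^-(\bar q_1 z)^{-1}E_0(\bar q_1 z)$; and $e,\check e$ (resp.\ $f,\check f$) are, up to the constant $\pm a_{12}$, the fused products $E_1E_0$ (resp.\ $F_0F_1$) at the shifted arguments fixed in the definition. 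After this substitution each relation becomes an identity about a product of three factors drawn from $\{E_0,E_1,F_0,F_1,\Psi_0^\pm\}$ at shifted spectral parameters.

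To rearrange such triple products I would use only three families of $\E_2$ relations: the vanishing cross-commutators $[E_1(z),F_0(w)]=[E_0(z),F_1(w)]=0$; the diagonal delta-commutators $[E_i(z),F_i(w)]=(\bar s_2-\bar s_2^{-1})^{-1}\bigl(\delta(Cw/z)\Psi_i^+(w)-\delta(Cz/w)\Psi_i^-(z)\bigr)$; and the quadratic $\Psi_j^\pm$--$E_i$/$F_i$ exchange relations. For instance, to check $(fX^+_0)$ one inserts $X^+_0(z)\mapsto E_1(z)$ and $f(u)\mapsto -a_{12}F_0(\ast)F_1(\ast)$; since $E_1$ commutes with $F_0$, only $[F_1(\ast),E_1(z)]$ contributes, and its single delta term, after the resulting $\Psi_1^\pm$ is carried to the right and $\psi^\pm$ is recognised via \eqref{k0}, should equal $X^+_1$ times $\psi^+$ by Lemma~\ref{X simpl}. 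Dually, for $(eX^+_1)$ one inserts $e(u)\mapsto a_{12}E_1E_0$ and $X^+_1\mapsto F_0\Psi_0^{+,-1}$: the $E_1$ passes freely through $F_0$, the $\Psi_0^+$ is moved across $E_1$ and $E_0$, the accumulated $g_{01}$-factors should assemble into $\omega^-(z/u)$, and $[E_0,F_0]$ supplies the delta term whose residue is $X^+_0(q_1 z)=E_1(q_1z)$. The relations $(eX^-_0)$ and $(fX^-_{-1})$ are handled by the transpose manipulations with the roles of $E$ and $F$ interchanged.

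The four checked relations require no new input: the isomorphism $\E_2(\bar q_1,\bar q_2,\bar q_3)\xrightarrow{\sim}\E_2(\bar q_3,\bar q_2,\bar q_1)$ interchanges $e\leftrightarrow\check e$, $f\leftrightarrow\check f$ together with $q_i\leftrightarrow\check q_i$, and carries each unchecked identity to its checked analogue (up to a rescaling of spectral parameters). Thus in practice it suffices to verify $(fX^+_0)$ and $(eX^+_1)$ carefully.

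\textbf{Main obstacle.} The hard part will be purely the bookkeeping of rational functions and spectral shifts: one must check that the products of $g_{ij}$-exchange factors and of the $\Psi^\pm$-exchange functions telescope to precisely $\omega^\pm$ or $\check\omega^\pm$ with the exact arguments appearing in $(eX^+_1)$, $(\check eX^+_1)$, etc.\ --- using the identities following \eqref{om}, in particular $\omega(q_1x)=\omega(x^{-1})$ and $\omega(q_1z/w)/\omega(z/w)=-g(z,w)/g(w,z)$ --- and that the residue of each delta term, once the stray $\Psi_i^\pm$ has been eliminated through the $\Psi$-relations and \eqref{k0}, reproduces the correctly shifted $X^\pm_{i\mp1}$ with normalisation $(s_1^{-1}-s_1)^{-1}$ or $(\check s_1^{-1}-\check s_1)^{-1}$. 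Throughout, the extraction of delta terms from products of two-sided currents must be justified as in Remark~\ref{expand remark}, expanding each exchange function in the direction dictated there. These seed relations then feed, via the recursions of Corollary~\ref{inductive X cor}, the induction that establishes $(eX^\pm_i)$, $(fX^\pm_i)$, $(\check eX^\pm_i)$, $(\check fX^\pm_i)$ for all $i$.
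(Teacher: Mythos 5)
Your overall route coincides with the paper's: substitute the $\tau$-images ($X_0^+\mapsto E_1$, $X_0^-\mapsto(s_2-s_2^{-1})F_1$, Lemma \ref{X simpl} for $X_1^+$ and $X_{-1}^-$, the fused expressions for $e,f,\check e,\check f$), then reduce each identity to the $\E_2$ relations; and your observation that the checked relations follow from the unchecked ones via the $\bar q_1\leftrightarrow\bar q_3$ symmetry is an acceptable substitute for the paper's ``the other relations are similar''. However, there is a concrete gap in both of your worked sketches: you refer to ``the delta term'' / ``its single delta term'' produced by the diagonal commutator, whereas $[E_i(z),F_i(w)]$ produces \emph{two} delta terms, one with $\Psi_i^+$ and one with $\Psi_i^-$, while the right-hand sides of $(eX_1^+)$ and $(fX_0^+)$ contain only one delta function. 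The unwanted term is not removed by the bookkeeping of exchange factors you describe. For $(eX_1^+)$ the combination reduces (up to a constant) to $E_1(\bar q_1\bar q_3^{-1}u)\,[E_0(\bar q_1u),F_0(C^{-1}\bar q_1z)]\,\Psi_0^+(C^{-1}\bar q_1z)^{-1}$, and the $\Psi_0^-$-term, supported on $z=C^2u$, survives all your manipulations; the paper kills it with the fused-current vanishing $E_1(u)\Psi_0^-(\bar q_3u)=0$, and for $(fX_0^+)$ with $F_0(\bar q_3u)\Psi_1^-(C^{-1}u)=0$. These are the $\E_2$-analogues of Lemma \ref{zero currents lem} and hold only on admissible modules (a zero of the $\Psi$--$E$/$F$ exchange polynomial hit inside a Laurent-polynomial matrix coefficient). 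This is the one genuinely non-mechanical input of the proof, and your proposal does not identify it; without it the computation ends with an extra delta term and does not close.

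A smaller inaccuracy: in $(fX_0^+)$ the factor $\psi^+(u)$ on the right-hand side is not recognised via \eqref{k0} but from the definition of the fused Cartan current, $\psi^+(q_1^{-1}w)=\Psi_0^+(\bar q_3w)\Psi_1^+(w)$, which together with Lemma \ref{X simpl} gives $X_1^+(Cu)\psi^+(u)=F_0(\bar q_1u)\Psi_1^+(q_1u)$ --- precisely the coefficient of the surviving delta term. With the vanishing identities added and this identification corrected, your plan closes and is essentially the paper's proof.
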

\begin{proof}
Consider
($eX^+_1$). 
The left hand side can be written as
\begin{align*}
-a E_1(\bq_1\bq_3^{-1}u)
&[E_0(\bq_1u),F_0(C^{-1}\bq_1z)]\Psi_0^+(C^{-1}\bq_1z)^{-1} 
\\
&=\frac{1}{s_1-s_1^{-1}} E_1(\bq_1\bq_3^{-1}u)
\Bigl\{\delta\Bigl(\frac{z}{u}\Bigr)
-\delta\Bigl(\frac{C^2u}{z}\Bigr)\Psi_0^-(\bq_1u)
\Psi_0^+(C\bq_1u)^{-1} 
\Bigr\}\,.
\end{align*}
Taking into account that $E_1(u)\Psi_0^-(\bq_3u)=0$, we obtain $(eX^+_1)$. 

Consider $(fX^+_0)$. The left hand side can be written as 
we compute
\begin{align*}
-aF_0(\bq_1 u)[F_1(\bq_1\bq_3^{-1}u), E_1(z)]
&=-\frac{1}{s_1-s_1^{-1}}F_0(\bq_1u) 
\Bigl\{\delta\Bigl(\frac{Cq_1u}{z}\Bigr)\Psi^+_1(q_1u)
-\delta\Bigl(\frac{Cz}{q_1u}\Bigr)\Psi^-_1(z)
\Bigr\}\,.
\end{align*}
We use $F_0(\bq_3u)\Psi^-_1(C^{-1}u)=0$ and 
$X_1^+(Cu)\psi^+(u)=F_0(\bq_1u)\Psi^+_1(q_1u)$.

Proof of other relations are similar. 
\end{proof}

Then the general case follows by induction using the recursion of Corollary \ref{inductive X cor}. 

For example, assume $i\leq 0$ and $(eX_i^+)$ and $(\check{e}X_i^+)$ hold. Let us show $(eX_{i-1}^+)$ and $(\check{e}X_{i-1}^+)$.
By Corollary \ref{inductive X cor}, we have
$X^+_{i-1}(z)=c_1X^+_i(q_1^{-1}z)e(q_1^{-1}z)=\check c_1 X^+_i(z)\check{e}(q_1^iz).$

Since $e(u)$ and $\check{e}(v)$ commute, 
we can use the second formula and $(eX_i^+)$ to deduce $(eX_{i-1}^+)$:
\begin{align*}
e(u)X^+_{i-1}(z)-\omega^-\Bigl(\frac{z}{u}\Bigr)X^+_{i-1}(z)e(u)
&=\Bigl(e(u)X^+_{i}(z)-\omega^-\Bigl(\frac{z}{u}\Bigr)X^+_{i}(z)e(u)\Bigr)
\check{c}_1\check{e}(q_1^iz)\\
&=\frac{1}{s_1^{-1}-s_1 
}
\delta\Bigl(\frac{z}{u}\Bigr)X^+_{i-1}(q_1z)
\check{c}_1
\check{e}(q_1^iz)=\frac{1}{s_1^{-1}-s_1 
}\delta\Bigl(\frac{z}{u}\Bigr)X_{i-2}^+(q_1z)\,.
\end{align*}
Similarly we obtain $(\check{e}X_{i-1}^+)$:
\begin{align*}
\check{e}(u)X^+_{i-1}(z)-
\check\omega^-\Bigl(q_1^{i-1}\frac{z}{u}\Bigr)X^+_{i-1}(z)\check e(u)
&=\frac{1}{\check s_1^{-1}-\check s_1
}\delta\Bigl(q_1^{i-1}\frac{z}{u}\Bigr)
X^+_{i-1}(q_1^{-1}z)c_1 e(q_1^{-1}z)\\ 
&=\frac{1}{\check s_1^{-1}-\check s_1 
}\delta\Bigl(q_1^{i-1}\frac{z}{u}\Bigr)
X^+_{i-2}(z)\,.
\end{align*}

\medskip 

As another example, assume $(eX^+_{i})$ with $i\ge0$ holds. Let us show 
$(eX^+_{i+1})$. By Corollary \ref{inductive X cor},
$X_{i+1}^+(z)=-\check{c_1}\check{f}(C^{-1}q_1^{i+1}z)\psi^+(C^{-1}q_1^{i+1}z)^{-1}X_i^+(z)$. 
We compute
\begin{align*}
&e(u)X^+_{i+1}(z)-\omega^-_1\Bigl(\frac{z}{u}\Bigr) X^+_{i+1}(z)e(u)\\
&=-\check c_1 \check f(C^{-1}q_1^{i+1}z){\check\psi^+
(C^{-1}q_1^{i+1}z)}^{-1}
\times 
\Bigl(e(u)X^+_{i}(z)-\omega^-_1\Bigl(\frac{z}{u}\Bigr) X^+_{i}(z)e(u)\Bigr)
\\
&=-\check c_1 \check f(C^{-1}q_1^{i+1}z){\check\psi^+
(C^{-1}q_1^{i+1}z)}^{-1}
\times \frac{1}{s_1^{-1}-s_1}\delta\Bigl(\frac{z}{u}\Bigr)X^+_{i-1}(q_1z)=\frac{1}{s_1^{-1}-s_1}\delta\Bigl(\frac{z}{u}\Bigr)X^+_{i}(q_1z)\,.
\end{align*}

The other cases are similar.

In addition, we have the following improvement on Corollary \ref{inductive X cor}. 
\begin{cor}\label{cor:rec-all}
The recursion relations in Corollary \ref{inductive X cor} hold for all $i\in\Z$.
\end{cor}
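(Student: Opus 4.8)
The plan is to read off each recursion in Corollary~\ref{inductive X cor} from the $\delta$-function term of the corresponding relation of the first group. By the point where Corollary~\ref{cor:rec-all} is stated, the induction in the proof of Theorem~\ref{tau thm} — starting from the seeds in Lemma~\ref{lem:seed} and propagating in both directions through the recursions of Corollary~\ref{inductive X cor} — will have established that all of $(eX^\pm_i)$, $(\check eX^\pm_i)$, $(fX^\pm_i)$, $(\check fX^\pm_i)$ hold in $\End\mc M_2$ for every $i\in\Z$. The recursions and these relations are two encodings of the same residue data, so once the latter are available for all $i$ the former follow for all $i$.

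Concretely, I would treat $(eX^+_i)$ as the model case. As spelled out in Remark~\ref{expand remark}, this relation is equivalent to: the identity of rational functions $e(u)X^+_i(z)=\omega(z/u)X^+_i(z)e(u)$; the statement that the only pole in $u\in\C^\times$ on either side is a simple pole at $u=z$, so that $X^+_i(z)e(u)$ is regular at $u=z$ and the fused current $X^+_i(z)e(z)$ is well defined; and $\Res_{u=z}\bigl(e(u)X^+_i(z)\bigr)\,du/u=(s_1^{-1}-s_1)^{-1}X^+_{i-1}(q_1z)$. Computing the same residue from the right-hand side gives $\bigl(\Res_{u=z}\omega(z/u)\,du/u\bigr)X^+_i(z)e(z)$, and the scalar identity $c_1=(s_1^{-1}-s_1)\Res_{u=z}\omega(z/u)\,du/u$ is the one already used in the base cases; hence $X^+_{i-1}(z)=c_1X^+_i(q_1^{-1}z)e(q_1^{-1}z)$ for all $i$. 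I would then repeat this word for word: the $\check e$-form of this recursion from $(\check eX^+_i)$, the two up-recursions for $X^+$ from $(fX^+_i)$ and $(\check fX^+_i)$, and all four recursions for $X^-$ from $(eX^-_i)$, $(\check eX^-_i)$, $(fX^-_i)$, $(\check fX^-_i)$. Alternatively one could verify each recursion directly from the explicit fused-current formulas of Section~\ref{gl2 sec}, but the residue route is shorter given what is already in hand.

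The step I expect to need the most care is the family of relations containing $X^-_i(z)$, such as $(eX^-_i)$: there the pole in $u$ producing the $\delta$-term is created not by an $\omega$-factor but by an $e$- (or $f$-) factor occurring inside $X^-_i(z)$, and one must in addition move a $\psi^-$ (respectively $\check\psi^-$) past $X^-_i(z)$ using $(\psi X^-_i)$ (respectively $(\check\psi X^-_i)$) to bring the residue into the stated product form. These are exactly the manipulations already performed in the $r=0,1$ base cases of Corollary~\ref{inductive X cor}, so I do not expect any genuinely new obstacle; the whole argument is routine bookkeeping once the first-group relations are known for all $i\in\Z$.
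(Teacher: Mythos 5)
Your proposal is correct and follows exactly the paper's route: the paper proves Corollary \ref{cor:rec-all} in one line, "by taking residues of the relations $(eX^\pm_i)$, $(\check eX^\pm_i)$, $(fX^\pm_i)$, $(\check fX^\pm_i)$," which at that point of the proof of Theorem \ref{tau thm} are known for all $i\in\Z$. Your detailed treatment of the model case $(eX^+_i)$ (regularity of $X^+_i(z)e(u)$ at $u=z$ and the scalar identity $c_1=(s_1^{-1}-s_1)\Res_{u=z}\omega(z/u)\,du/u$) is accurate and simply spells out what the paper leaves implicit.
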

\begin{proof} The corollary is obtained by taking residues of the relations $(eX^\pm_i)$, $(\check eX^\pm_i)$, 
$(fX^\pm_i)$, $(\check fX^\pm_i)$.
\end{proof}
\subsection{The second group of relations.}
Next, we discuss the second group of relations, the relations between $X^\pm_i(z)$ and $X^\pm_j(w)$.

We consider the relations between $X^+_i(z)$  and $X^+_j(w)$. Namely, for $j\geq i$ we prove 
\begin{align}\label{i,j ++ rel}
\gamma_{i,j}(z,w)X^+_i(z)X^+_j(w)=(-1)^{j-i-1}
\gamma_{j,i}(w,z)X^+_j(w)X^+_i(z)
\end{align}
where $\gamma_{i,i}(z,w)=z-q_2w$ and 
\begin{align}\label{gamma}
\gamma_{i,j}(z,w)=\prod_{r=1} ^{j-i-1}(z-q_1^rw)\,,\qquad \gamma_{j,i}(w,z)=q_2p_{j-i}^+(z/w)
\prod_{r=0}^{j-i-2}(q_3^{-1}q_1^{r}w-z)\,,
\end{align}
for $j>i$.

Using the recursion in Corollary \ref{inductive X cor}, 
it is straightforward to check that 
 \eqref{i,j ++ rel} 
holds as rational functions.
For brevity we say that $a(z)b(w)$ is a Laurent polynomial if its matrix coefficients
are all Laurent polynomials. 
We are to show that the left hand side of  \eqref{i,j ++ rel} is a Laurent polynomial. 

The results of the previous section imply in particular that
\begin{align*}
&(u-z)e(u)X^+_i(z)\,,\quad (u-q_1^iz)\check e(u)X^+_i(z) \,,
\\
&(z-q_2^{-1}u)(z-q_3^{-1}u)X^+_i(z)e(u)\,,
\quad
(q_1^{i}z-\check q_2u)(q_1^{i}z-\check q_3 u)X^+_{i-1}(z)\check e(u)\,,
\end{align*}
are Laurent polynomials. 
Using this we show first that 
\begin{align}\label{ii, i-1i}
(z-q_2w)X^+_{i}(z)X^+_{i}(w),
\quad 
X^+_{i-1}(z)X^+_i(w)
\end{align}
are Laurent polynomials for all $i\in\Z$. 

Suppose that $(z-q_2w)X^+_i(z)X^+_i(w)$ is a Laurent polynomial 
for some $i$.
Using again the recursion in Corollary \ref{inductive X cor}, 
we see that
\begin{align*}
&(q_1^{-1}z-q_2w)(q_1^{-1}z-w)
X^+_{i-1}(z)X^+_i(w)
=c_1(q_1^{-1}z-q_2w)(q_1^{-1}z-w) X^+_i(q_1^{-1}z)e(q_1^{-1}z)X^+_i(w)\,,
\\
&(z-q_2w)(z-w)X^+_{i-1}(z)X^+_i(w)
=\check c_1(z-q_2w)(z-w)
X^+_i(z)\check e(q_1^{i}z)X^+_i(w)\,,
\end{align*}
are both Laurent polynomials. It follows that $X^+_{i-1}(z)X^+_i(w)$ is a Laurent polynomial. 
This implies further that 
\begin{align*}
&(z-q_2w)(z-q_3w)
X_{i-1}^+(z)X^+_{i-1}(w)= 
c_1(z-q_2w)(z-q_3w)
X^+_{i-1}(z)X^+_{i}(q_1^{-1}w)e(q_1^{-1}w)
\\
&(z-\check q_2w)(z-\check q_3w)X_{i-1}^+(z)X^+_{i-1}(w)
= 
(z-\check q_2w)(z-\check q_3w)
X^+_{i-1}(z)X^+_{i}(w)\check e(q_1^{i}w)
\end{align*}
are both Laurent polynomials. Hence $(z-q_2w)X_{i-1}^+(z)X^+_{i-1}(w)$ is a Laurent polynomial. 
Since  $(z-q_2w)X_{0}^+(z)X^+_{0}(w)$ is a Laurent polynomial, we have shown that
\eqref{ii, i-1i} are Laurent polynomias for $i\le0$.

Arguing similarly using $f(u),\check{f}(u)$, we see 
that if $(z-q_2w)X^+_i(z)X^+_i(w)$ is a Laurent polynomial then so are
$X^+_i(z)X^+_{i+1}(w)$, $(z-q_2w)X^+_{i+1}(z)X^+_{i+1}(w)$. 
This shows that \eqref{ii, i-1i} are Laurent polynomials
for $i\ge1$.

By the same argument, it is easy to verify that
if $\gamma_{i,j}(z,w)X^+_i(z)X^+_j(w)$ 
is a Laurent polynomial for some $i,j$ with $i<j$,
then so is $\gamma_{i-1,j}(z,w)X^+_{i-1}(z)X^+_j(w)$. 

This completes the proof of the relation \eqref{i,j ++ rel}.

The relations between $X^-_i(z)$  and $X^-_j(w)$
are checked in the same way.

\subsection{The third group of relations.}
We finish with checking relations \eqref{EF} between $X^+_i$ and $X^-_j$. We again use an inductive argument. 

\begin{lem}\label{EF recursion lem}
If \eqref{EF} holds 
for some $i,j\in\Z$, $i+j\neq 1$ then \eqref{EF} holds 
for $i-1,j$ and for $i,j-1$.

If \eqref{EF} holds 
for some $i,j\in\Z$, $i+j\neq -1$, then \eqref{EF} holds 
for $i+1,j$ and for $i,j+1$.
\end{lem}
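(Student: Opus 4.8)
The four implications are proved by the same device, so the plan is to treat the passage from $(i,j)$ to $(i-1,j)$ in detail; the remaining cases $(i,j-1)$, $(i+1,j)$, $(i,j+1)$ follow symmetrically, peeling off the appropriate $\check\E_1$-generator ($\check e$ or $\check f$, together with the accompanying $\check\psi$-factor) by the recursions of Corollary~\ref{cor:rec-all} (one could equally well work on the $\E_1$ side). First I would write $X^+_{i-1}(z)=\check c_1\,X^+_i(z)\,\check e(q_1^iz)$ and expand by Leibniz,
\[
[X^+_{i-1}(z),X^-_j(w)]=\check c_1\,[X^+_i(z),X^-_j(w)]\,\check e(q_1^iz)+\check c_1\,X^+_i(z)\,[\check e(q_1^iz),X^-_j(w)]\,.
\]

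In the first summand I would substitute the induction hypothesis \eqref{EF} for $(i,j)$ and then carry $\check e(q_1^iz)$ leftward past the Cartan currents: it commutes with every $k^\pm_r$ (since $\E_1$ and $\check\E_1$ commute), and it passes through each $\check k^\pm_{\check r}$ by the checked analogue of Lemma~\ref{k identity lemma}, lowering the index $\check r$ by one and producing residual terms that carry a $\check\psi^-$ or a $\check\psi_0$. In the second summand I would compute $[\check e(q_1^iz),X^-_j(w)]$ directly from relation $(\check eX^-_j)$, obtaining a single delta-term proportional to $X^+_i(z)\,\check\psi^-(w)\,X^-_{j-1}(w)$, then move $\check\psi^-(w)$ to the left of $X^+_i(z)$ via relation $(\check\psi X^+_i)$ and fold the surviving $\check k^\pm_0$-factor in using the defining difference equation \eqref{k0}.

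Assembling the two summands, the index shift $\check r\mapsto\check r-1$ turns $\sum_{r+\check r=i+j}$ into $\sum_{r+\check r=i+j-1}$ (and similarly for the negative half of \eqref{EF}), the $\psi_0$ generated inside Lemma~\ref{k identity lemma} accounts for $\psi_0^{-i}\mapsto\psi_0^{-i+1}$, and the various spurious delta-contributions — those coming from the checked $k$-identities and the one coming from $(\check eX^-_j)$ — cancel against one another, leaving precisely the two extremal terms $\check r=0$ and $r=0$, which are exactly the boundary terms of the two shifted sums. The result is \eqref{EF} with $i$ replaced by $i-1$.

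The main obstacle is the bookkeeping of the delta-function residues, and in particular the occurrence of $\omega$ (or $\check\omega$) at argument $1$, which is unavoidable whenever one solves a two-sided exchange relation for the opposite ordering at a point lying on the support of a delta; such steps must be read in the rational-function-plus-delta sense of Remark~\ref{expand remark} rather than by naive substitution. It is exactly at the boundary terms that the hypothesis $i+j\neq1$ (respectively $i+j\neq-1$ for the upward steps) is used: it ensures that no nonexistent current $k^\pm_{-1}$ is ever called for and that the $\psi_0$-powers and delta supports of the boundary corrections line up with those of the shifted relation; if $i+j=\pm1$ one would be forced to manufacture a whole missing half of \eqref{EF} for the shifted index out of a single correction term, which this argument does not do. Since the permitted steps still connect every $(i,j)$ to the base values where \eqref{EF} is checked by hand, excluding these cases is harmless.
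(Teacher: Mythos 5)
There is a genuine gap at the cross term of your Leibniz expansion. Writing $[X^+_{i-1}(z),X^-_j(w)]=\check c_1[X^+_i(z),X^-_j(w)]\,\check e(q_1^iz)+\check c_1 X^+_i(z)[\check e(q_1^iz),X^-_j(w)]$, the first summand can indeed be processed with \eqref{EF} at $(i,j)$ and the checked analogue of Lemma \ref{k identity lemma}, and it yields only Cartan-type terms. But the second summand is, by $(\check eX^-_j)$, proportional to $\delta\bigl(q_1^{1-i-j}w/(Cz)\bigr)\,X^+_i(z)\,\check\psi^-(w)\,X^-_{j-1}(w)$: after moving $\check\psi^-$ to the left you are still holding the \emph{ordered product} $X^+_i(z)X^-_{j-1}(w)$ pinned to a delta support. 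This is not a Cartan expression, so it cannot ``cancel against'' the delta corrections produced by the checked $k$-identities (those are pure products of $k,\check k,\check\psi$); to convert it into the missing extremal term $k^-_{1-i-j}\check k^-_0$ of \eqref{EF}$_{i-1,j}$ you would need to know how $X^+_i$ and $X^-_{j-1}$ commute on exactly that support, i.e.\ essentially the relation \eqref{EF} at the shifted index $(i,j-1)$, which is one of the statements being proved. The same circularity recurs in the other three cases, so the induction step as you describe it does not close. Your explanation of the hypothesis $i+j\neq 1$ (``no $k^\pm_{-1}$ is called for'') is also not where the condition is actually needed.

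The paper avoids this by not multiplying the relation by $\check e$ but by applying the twisted combination $e(u)(\sharp)_{i,j}-\omega^-(z/u)(\sharp)_{i,j}e(u)$ to the whole identity $(\sharp)_{i,j}$. The point of the $\omega^-$-twist is that the would-be stray product $\omega^-(z/u)X^+_i(z)[e(u),X^-_j(w)]-[e(u),X^-_j(w)]X^+_i(z)$ becomes, after moving $\psi^-$ across $X^+_i$ via $(\psi X^+_i)$, a genuine commutator $\psi^-\,[X^+_i(z),X^-_{j-1}(q_1^{-1}w)]$ times a delta; so the left-hand side involves only the two unknown shifted commutators, while the right-hand side is computed from Lemma \ref{k identity lemma}. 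Matching delta supports determines $[X^+_{i-1},X^-_j]$ and $[X^+_i,X^-_{j-1}]$ each up to one undetermined term $Y(z)$; repeating the computation with $\check e(u)$ and $\check\omega^-$ gives a second determination with an unknown $\check Y(z)$, and comparing the two expressions fixes $Y$ and $\check Y$ --- this cross-comparison is exactly where $i+j\neq 1$ enters, since it guarantees the two undetermined delta-slots sit on distinct supports. If you want to salvage your approach, you would need an independent way to evaluate the ordered product $X^+_i(z)X^-_{j-1}(w)$ on the support $Cz=q_1^{1-i-j}w$; as written, the argument assumes what it must prove.
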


\begin{proof}
 We compute $e(u)(\sharp)_{i,j}-\omega^-(z/u)(\sharp)_{i,j}e(u)$, where $(\sharp)_{i,j}$ is the relation $\eqref{EF}$ for $[X^+_i,X_j^{-}]$.
For the left hand side we write
\begin{align*}
e(u)[X_i^+(z),X^-_j(w)] 
-\omega^-\Bigl(\frac{z}{u}\Bigr)[X_i^+(z)&,X^-_j(w)] e(u)\\
=[\Bigl(e(u)X^+_i(z)&-\omega^-(z/u)X^+_i(z)e(u)\Bigr)
,X^-_j(w)]\\
&
+\omega^- 
(z/u)X_i^+(z)[e(u),X^-_j(w)]-[e(u),X^-_j(w)]X^+_i(z)
\end{align*}
and substitute
\begin{align*}
&e(u)X^+_i(z)-\omega^-(z/u)X^+_i(z)e(u)=
-\frac{c_1}{\kappa_1}
\delta\Bigl(\frac{z}{u}\Bigr)X^+_{i-1}(q_1z)
\,, \\
&[e(u),X_j^-(w)]=\frac{c_1}{\kappa_1}
\delta\Bigl(\frac{Cq_1u}{w}\Bigr)\psi^-(u) X^-_{j-1}(q_1^{-1}w) 
\,.
\end{align*}
For the right hand side we use Lemma \ref{k identity lemma}. We obtain 
\begin{align*}
&-\delta\Bigl(\frac{z}{u}\Bigr)[X^+_{i-1}(q_1z),X^-_j(w)]
+\delta\Bigl(\frac{Cq_1u}{w}\Bigr)\psi^-(u)[X^+_i(z),X^-_{j-1}(q_1^{-1}w)]
\\ 
&
=\psi_0^{-i}
\sum_{r+\check{r}=i+j\atop r,\check{r}\in\Z_{\ge0}}
\delta\Bigl(\frac{C w}{q_1^rz}\Bigr) 
\,
\Bigl(
\delta\Bigl(\frac{C q_1 u}{w}\Bigr)
\psi^-(u)k^+_{r-1}(q_1^{-r}w)
-\delta\Bigl(\frac{z}{u}\Bigr)
\psi_0
k^+_{r-1}(q_1^{-r+1}w)
\Bigr)
\check{k}_{\check{r}}^+(\check{q}_1^{-\check{r}}q_1^{-j}w)  \\
&-\psi_0^{-j}
\sum_{r+\check{r}=-i-j\atop r,\check{r}\in\Z_{\ge0}}
\delta\Bigl(\frac{C z}{q_1^rw}\Bigr)
\,
\Bigl(\delta\Bigl(\frac{C q_1u}{w}\Bigr)
\psi_0
\psi^-(u)k^-_{r+1}(q_1^{-r-1}z)
-\delta\Bigl(\frac{z}{u}\Big)k^-_{r+1}(q_1^{-r}z) 
\Bigr)
\check{k}_{\check{r}}^-(\check{q}_1^{-\check{r}}q_1^iz). 
\end{align*}
Therefore, 
\begin{align*}
[X^+_{i-1}(q_1z),X^-_j(w)]
=& \psi_0^{-i+1}
\sum_{r+\check{r}=i+j\atop r,\check{r}\in\Z_{\ge0}}
\delta\Bigl(\frac{C w}{q_1^rz}\Bigr) 
\, k_{r-1}^+(q_1^{-r+1}w)
\check{k}_{\check{r}}^+(\check{q}_1^{-\check{r}}q_1^{-j}w)      
\\
&-\psi_0^{-j}
\Bigl\{
\sum_{r+\check{r}=-i-j\atop r,\check{r}\in\Z_{\ge0}}
\delta\Bigl(\frac{C z}{q_1^rw}\Bigr)
\,k^-_{r+1}(q_1^{-r}z) 
\check{k}_{\check{r}}^-(\check{q}_1^{-\check{r}}q_1^iz) 
+\delta\Bigl(\frac{C q_1 z}{w}\Bigr) 
\psi_0
\psi^-(z)Y(z)
\Bigr\}\,,
\end{align*}
\begin{align*}
[X^+_i(z),X^-_{j-1}(q_1^{-1}w)]
=&\psi_0^{-i}
\sum_{r+\check{r}=i+j\atop r,\check{r}\in\Z_{\ge0}}
\delta\Bigl(\frac{C w}{q_1^rz}\Bigr) 
\, k_{r-1}^+(q_1^{-r}w)
\check{k}_{\check{r}}^+(\check{q}_1^{-\check{r}}q_1^{-j}w)    
\\
&-\psi_0^{-j+1}
\Bigl\{
\sum_{r+\check{r}=-i-j\atop r,\check{r}\in\Z_{\ge0}}
\delta\Bigl(\frac{C z}{q_1^rw}\Bigr)
\,k^-_{r+1}(q_1^{-r-1}z) 
\check{k}_{\check{r}}^-(\check{q}_1^{-\check{r}}q_1^iz)  
+\delta\Bigl(\frac{C q_1 z}{w}\Bigr) Y(z)
\Bigr\}
\,,\nn
\end{align*}
for some $Y(z)$.

Similarly, computing $\check{e}(u)(\sharp)_{i,j}-\check{\omega}^-(q_1^iz/u)(\sharp)_{i,j}\check{e}(u)$, we arrive at
\begin{align*}
[X^+_{i-1}(z),X^-_j(w)]
=&\psi_0^{-i+1}
\sum_{r+\check{r}=i+j\atop r,\check{r}\in\Z_{\ge0}}
\delta\Bigl(\frac{C w}{q_1^rz}\Bigr) 
\, k_{r}^+(q_1^{-r}w)
\check{k}_{\check{r}-1}^+(\check{q}_1^{-\check{r}+1}q_1^{-j}w)      
\\
&-\psi_0^{-j}
\Bigl\{
\sum_{r+\check{r}=-i-j\atop r,\check{r}\in\Z_{\ge0}}
\delta\Bigl(\frac{C z}{q_1^rw}\Bigr)
\,k^-_{r}(q_1^{-r}z) 
\check{k}_{\check{r}+1}^-(\check{q}_1^{-\check{r}}q_1^iz) 
+\delta\Bigl(\frac{C z}{q_1^{-i-j+1}w}
\Bigr) 
\psi_0
\check{\psi}^-(q_1^iz)
\check{Y}(z)
\Bigr\} 
\,,
\\
[X^+_i(z),X^-_{j-1}(w)]
=&\psi_0^{-i}
\sum_{r+\check{r}=i+j
\atop r,\check{r}\in\Z_{\ge0}}
\delta\Bigl(\frac{C w}{q_1^rz}\Bigr) 
\, k_{r}^+(q_1^{-r}w)
\check{k}_{\check{r}-1}^+(\check{q}_1^{-\check{r}}q_1^{-j}w)  
\\
&-\psi_0^{-j+1}
\Bigl\{
\sum_{r+\check{r}=-i-j\atop r,\check{r}\in\Z_{\ge0}}
\delta\Bigl(\frac{C z}{q_1^rw}\Bigr)
\,k^-_{r}(q_1^{-r}z) 
\check{k}_{\check{r}+1}^-(\check{q}_1^{-\check{r}-1}q_1^iz)  
+\delta\Bigl(\frac{C z}{q_1^{-i-j+1}w}\Bigr) 
\check{Y}(z)
\Bigr\}
\,,\nn
\end{align*}
for some  $\check{Y}(z)$.
Comparing these and using the assumption $i+j\neq 1$, 
we conclude 
\begin{align*}
&Y(z)=k^-_0(z)\check{k}^-_{-i-j+1}(\check{q}^{i+j-1}q_1^iz) \,,\qquad
\check{Y}(z)=k^-_{-i-j+1}(q_1^{i+j-1}z)\check{k}_0(q_1^iz)\,,
\end{align*}
which proves the first assertion of the lemma.

The second assertion is proved similarly by computing and comparing
$\omega^+(w/u)f(u)(\sharp)_{i,j}-(\sharp)_{i,j}f(u)$ 
and 
$\check{\omega}^+(q_1^{-i}w/u)\check{f}(u)(\sharp)_{i,j}-(\sharp)_{i,j}f(u)$.
\end{proof}

Now we checked that \eqref{EF} holds for $i=j=0$. Using Lemma \ref{EF recursion lem}  we see that \eqref{EF} hold for all $i,j$ where $i+j\neq 0$.
It remains to show that 
\begin{align}
[X_i^+(z),X_{-i}^-(w)]=
\psi_0^{-i} \delta\Bigl(\frac{Cw}{z}\Bigr)k_0^+(w)\check{k}_0^+(q_1^{i}w)
-\psi_0^{i} \delta\Bigl(\frac{Cz}{w}\Bigr)k_0^-(z)\check{k}_0^-(q_1^{i}z)\,.
\label{i+j=0}
\end{align}
From the proof of Lemma \ref{EF recursion lem}, we have 
\begin{align*}
&[X^+_{i-1}(z),X^-_{-i+1}(w)]=
\psi_0^{-i+1} \delta\Bigl(\frac{Cw}{z}\Bigr)k_0^+(w)\check{k}_0^+(q_1^{i-1}w)
-\psi_0^{i-1} \delta\Bigl(\frac{Cz}{w}\Bigr)\psi_0\check{\psi}^-(q_1^iz)
\check{Y}_i(z)\,,
\\
&[X^+_{i}(z),X^-_{-i}(w)]=
\psi_0^{-i} \delta\Bigl(\frac{Cw}{z}\Bigr)k_0^+(w)\check{k}_0^+(q_1^{i}w)
-\psi_0^{i} \delta\Bigl(\frac{Cz}{w}\Bigr)\check{Y}_i(z)\,
\end{align*}
for some $\check{Y}_i(z)$. 
Suppose \eqref{i+j=0} holds for some $i$. Then 
$\check{Y}_i(z)=k_0^-(z)\check{k}_0^-(q_1^{i}z)$,
which yields
\begin{align*}
\psi_0\check{\psi}^-(q_1^iz)\check{Y}_i(z)
=k_0^-(z)\check{k}_0^-(q_1^{i-1}z)\,,
\end{align*}
showing \eqref{i+j=0} for $i-1$.
Similarly, \eqref{i+j=0} for  $i-1$ implies $i$. 
Hence \eqref{i+j=0} holds for all $i$.

That finishes the check of all relations and therefore Theorem \ref{tau thm} is proved.

\section{Appendix: Proof of Theorem \ref{coproduct thm}}\label{coproduct proof sec}
In this section we prove Theorem \ref{coproduct thm}. 

We denote the expressions given in the right hand side as $\Delta X^\pm_i(z)$. 
We check that currents $\Delta X^\pm_i(z)$, 
$\Delta_1 e(z)$, etc, satisfy the relations of $\mc A_{N+1}$.

\subsection{The first group of relations.}
We start by checking the first group of the relations, that is the adjoint action of $\E_1,\check{\E}_1$ on $X^\pm_i$.

For example let us check relation $(eX_i^+)$. We need to
show  that on ${\mc F}_{1}(v)\otimes \mathcal{M}$ we have
\begin{align}
\Delta_1e(z)\cdot
\bigl(k_0^-(w)&\otimes X^+_{i-1/2}(s_1w)+k^-_1(q_1^{-1}w)\otimes\ X_{i+1/2}^+(s_1^{-1}w)\bigr)
\label{Delta e KPhi}\\
=  &\omega^-(w/z)
\bigl(k_0^-(w)\otimes X_{i-1/2}^+(s_1w)+ k^-_1(q_1^{-1}w)\otimes X^+_{i+1/2}(s_1^{-1}w)\bigr)
\cdot\Delta_1e(z)
\nn\\
&+\frac{1}{s_1^{-1}-s_1}\delta(w/z)
\Bigl\{k_0^-(q_1z)\otimes X^+_{i-3/2}(s_1^3z)+k_0^-(z)\otimes X^+_{i-1/2}(s_1w)
\Bigr\}\,.\nn
\end{align}
We have relation $(eX^+_{i+1/2})$ on $\mathcal{M}$ 
and 
\begin{align*}
&e(z)k^-_0(w)=\omega^+(w/z)k^-_0(w)e(z)\,,\\
&e(z)k^-_1(q_1^{-1}w)=\omega^-(w/z)k_1^-(q_1^{-1}w)e(z)\,
\end{align*}
on $\F_1(v)$.
Using these together with 
\begin{align*}
\omega^+(w/z)-\omega^-_1(w/z)= 
\frac{c_1}{s_1^{-1}-s_1}\Bigl(\delta(w/z)-\delta(q_1^{-1}w/z)\Bigr)\,,
\end{align*}
we compute
\begin{align*}
&\Delta_1 e(z)\cdot
\bigl(k_0^-(w)\otimes X^+_{i-1/2}(s_1w)+k^-_1(q_1^{-1}w)\otimes X^+_{i+1/2}(s_1^{-1}w)\bigr)
\\
&=e(z)k_0^-(w)\otimes X^+_{i-1/2}(s_1w)+e(z)k_1^{-}(q_1^{-1}w)\otimes 
X^+_{i+1/2}(s_1^{-1}w)\\
&+\psi^-(z)k^-_0(w)\otimes e(s_1z)X^+_{i-1/2}(s_1w)
+\psi^-(z)k_1^-(q_1^{-1}w)\otimes e(s_1z)X^+_{i+1/2}(s_1^{-1}w)
\\
&=\Bigl(\omega^-(w/z)+\frac{c_1}{s_1^{-1}-s_1}\bigl(\delta(w/z)
-\underline{\delta(q_1^{-1}w/z)}\bigr)\Bigr)
k_0^-(w)e(z)\otimes X^+_{i-1/2}(s_1w)
\\
&+\omega^-(w/z)k_1^{-}(q_1^{-1}w)\otimes X^+_{i+1/2}(s_1^{-1}w)
\\
&+k_0^-(w)\psi^-(z)\otimes
\Bigl\{\omega^-(w/z)X^+_{i-1/2}(s_1w)e(s_1z)+\frac{1}{s_1^{-1}-s_1}
\delta(w/z)X^+_{i+1/2}(s_1^3z)
\Bigr\}\\
&+\psi^-(z)k_1^-(q_1^{-1}w)\otimes
\Bigl\{\omega^-(q_1^{-1}w/z)X^+_{i-1/2}(s_1^{-1}w)e(s_1z)
+\frac{1}{s_1^{-1}-s_1}\underline{\delta(q_1^{-1}w/z)}X^+_{i-1/2}(s_1w)
\Bigr\}\,.
\end{align*}
The underlined terms cancel each other.
Finally using the relation
\begin{align*}
&\omega^-(q_1^{-1}w/z)\psi^-(z)k_1^-(q_1^{-1}w)
=\omega^-(w/z)k_1^-(q_1^{-1}w)\psi^-(z)
\end{align*}
 valid on $\F_1(v)$, we arrive at \eqref{Delta e KPhi}.

The other relations between $\mc K_N$ and $X^\pm$ are checked similarly.

\subsection{The second group of relations.}
Our goal is to prove the formal series relations
\begin{align*}
\gamma_{i,j}(z,w)\Delta X^+_{i-1/2}(s_1z)\Delta X^+_{j-1/2}(s_1w) 
=(-1)^{i-j-1}\gamma_{j,i}(w,z)\Delta X^+_{j-1/2}(s_1w)\Delta X^+_{i-1/2}(s_1z)\,,
\end{align*}
where $\gamma_{ij}(z,w)$ are given by \eqref{gamma}.
Here, for convenience we have shifted the indices in $\Delta X^\pm_i(z)$ to $\Delta X^\pm_{i-1/2}(z)$.
Without loss of generality we assume that $i\le j$. 

It is straightforward to check that these relations hold as rational functions. We need to show that the left hand side has no poles. 

We write
$\Delta X^+_{i-1/2}(s_1z)\Delta X^+_{j-1/2}(s_1w)=I+II+III+IV$,
where
\begin{align*}
&I=k^-_0(s_1z)k^-_0(s_1w)\otimes X^+_{i-1}(q_1z) X^+_{j-1}(q_1w)\,,
\\ 
&II=k^-_0(s_1z)k^-_1(s_1^{-1}w)\otimes X^+_{i-1}(q_1z) X^+_{j}(w)\,,
\\
&III=k^-_1(s_1^{-1}z)k^-_0(s_1w)\otimes X^+_{i}(z) X^+_{j-1}(q_1w)\,,
\\
&IV=k^-_1(s_1^{-1}z)k^-_1(s_1^{-1}w)
\otimes X^+_{i}(z) X^+_{j}(w)\,.
\end{align*}

Among them, two terms,
\begin{align*}
&\gamma_{i,j}(z,w)I=
k^-_0(s_1z)k^-_0(s_1w)\otimes \gamma_{i,j}(z,w)
X^+_{i-1}(q_1z) X^+_{j-1}(q_1w)\,,
\\
&\gamma_{i,j}(z,w)IV =
k^+_0(z)k^+_0(w)
\otimes \gamma_{i,j}(z,w)
X^+_{i}(z) X^+_{j}(w)\,,
\end{align*}
are clearly Laurent polynomials. 
Here in the second equality we used the identity $k_1^-(z)=-k_0^+(s_1z)$ which holds in Fock module $\mc F_1(v)$, see Lemma \ref{k on fock lem}.

We also have
\begin{align*}
k_1^-(s_1^{-1}z)k_0^-(s_1w)=k^-_0(s_1w)k_1^-(s_1^{-1}z)
\frac{1-q_2^{-1}w/z}{1-w/z} \frac{1-q_3^{-1}w/z}{1-q_1w/z}\,.
\end{align*}
Therefore,
\begin{align*}
&\gamma_{i,j}(z,w)(II+III)
=k^-_0(s_1w)k_1^-(s_1^{-1}z)\otimes (\ast)\,,
\\
&(\ast)=\gamma_{i,j}(z,w)
\Bigl\{X^+_{i-1}(q_1z) X^+_{j}(w)
+\omega\Bigl(\frac{q_1w}{z}\Bigr)
X^+_{i}(z) X^+_{j-1}(q_1w)
\Bigr\}\,.
\end{align*}
It suffices to show that ($\ast$) has no poles.

If $i=j$, we have
\begin{align*}
&\gamma_{i,i}(z,w)=(z-q_2w)\,,\quad
\gamma_{i-1,i}(z,w)=1\,,\quad
\gamma_{i,i-1}(z,w)=q_2^{-1}
\frac{(z-q_2w)(z-q_2q_1^{-1}w)}
{(z-w)(z-q_1^{-1}w)}\,.
\end{align*}
Hence $\gamma_{i,i}(z,w)X^+_{i-1}(q_1z) X^+_{i}(w)$ and
\begin{align*}
\gamma_{i,i}(z,w)\omega\Bigl(\frac{q_1w}{z}\Bigr)
X^+_{i}(z) X^+_{i-1}(q_1w)
=q_2
(z-q_2^{-1}w)\gamma_{i,i-1}(z,q_1w)
X^+_{i}(z) X^+_{i-1}(q_1w)
\end{align*}
are both Laurent polynomials.

For $i<j$, we first note that ($\ast$) can have a pole at most at $z=w$ and that pole is at most simple.
Indeed, for $i<j-1$ we have 
\begin{align*}
&\gamma_{i,j}(z,w)=q_1^{-j+i}\frac{1}{z-w}\gamma_{i-1,j}(q_1z,w)\,,\\ 
&\gamma_{i,j}(z,w)\omega\Bigl(\frac{q_1w}{z}\Bigr)
=\frac{(z-q_2^{-1}w)(z-q_3^{-1}w)}{z-w}\gamma_{i,j-1}(z,q_1w)\,,
\end{align*}
and  $\gamma_{i-1,j}(q_1z,w)X_{i-1}^+(q_1z)X_j^+(w)$ and  $\gamma_{i,j-1}(z,q_1w)X_{i-1}^+(z)X_j^+(q_1w)$ are Laurent polynomials.
For $i=j-1$, ($\ast$) becomes
\begin{align*}
X^+_{i-1}(q_1z)X^+_{i+1}(w) 
+\omega\Bigl(\frac{q_1w}{z}\Bigr)X^+_i(z)X^+_i(q_1w)\,.
\end{align*}
Since $X^+_i(q_1w)^2=0$, both terms can have at most a simple pole at $z=w$.

Note also that in all cases $\gamma_{i,j}(z,w)$ is regular at $z=w$. 

Therefore the check reduces to the following lemma.
\begin{lem}\label{lem:XXoXX}
For all $j>i$, we have
\begin{align*}
\mathop{\res}_{z=w}\Bigl\{X^+_{i-1}(z)X^+_j(q_1^{-1}w)
+\omega\Bigl(\frac{q_1w}{z}\Bigr)
X^+_i(q_1^{-1}z)X^+_{j-1}(w)
\Bigr\}\frac{dz}{z} =0\,.
\end{align*} 
\end{lem}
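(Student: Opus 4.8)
The plan is to use the recursion relations of Corollary \ref{cor:rec-all} to rewrite the expression in braces as $c_1 X^+_i(q_1^{-1}z)$ times a factor that visibly has no residue at $z=w$, the vanishing being a consequence of a short identity among values of $\omega$ that follows from $q_1q_2q_3=1$.

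Concretely, I would first substitute $X^+_{i-1}(z)=c_1X^+_i(q_1^{-1}z)\,e(q_1^{-1}z)$ and $X^+_{j-1}(w)=c_1X^+_j(q_1^{-1}w)\,e(q_1^{-1}w)$ (valid for all indices by Corollary \ref{cor:rec-all}). Since $X^+_i(q_1^{-1}z)$ appears on the left in both summands it can be pulled out, and then moving $e(q_1^{-1}z)$ past $X^+_j(q_1^{-1}w)$ via relation $(eX^+_j)$ — which in the rational-function form of Remark \ref{expand remark} reads $e(q_1^{-1}z)X^+_j(q_1^{-1}w)=\omega(w/z)X^+_j(q_1^{-1}w)e(q_1^{-1}z)$, with only a simple pole in $z$ at $z=w$ — one obtains
\begin{align*}
X^+_{i-1}(z)X^+_j(q_1^{-1}w)&+\omega\Bigl(\tfrac{q_1w}{z}\Bigr)X^+_i(q_1^{-1}z)X^+_{j-1}(w)\\
&=c_1\,X^+_i(q_1^{-1}z)X^+_j(q_1^{-1}w)\Bigl(\omega\Bigl(\tfrac{w}{z}\Bigr)e(q_1^{-1}z)+\omega\Bigl(\tfrac{q_1w}{z}\Bigr)e(q_1^{-1}w)\Bigr).
\end{align*}

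It then remains to examine the point $z=w$. By the quadratic $X^+X^+$-relations (the second group of relations, established in Section \ref{app tau sec}), $\gamma_{i,j}(z',w')X^+_i(z')X^+_j(w')$ is a Laurent polynomial and $\gamma_{i,j}(z',w')$ — equal to $z'-q_2w'$ when $j=i$ and to $\prod_{r=1}^{j-i-1}(z'-q_1^rw')$ when $j>i$ — has no zero at $z'=w'$; hence the factor $X^+_i(q_1^{-1}z)X^+_j(q_1^{-1}w)$ is regular at $z=w$. The current $e(q_1^{-1}z)$ is likewise regular at $z=w$, while $\omega(w/z)$ and $\omega(q_1w/z)$ each have a simple pole there, with
\begin{align*}
\mathop{\res}_{z=w}\omega\Bigl(\tfrac{w}{z}\Bigr)\tfrac{dz}{z}&=\frac{(1-q_2)(1-q_3)}{1-q_2q_3}=-\frac{q_1(1-q_2)(1-q_3)}{1-q_1}\,,\\
\mathop{\res}_{z=w}\omega\Bigl(\tfrac{q_1w}{z}\Bigr)\tfrac{dz}{z}&=\frac{(1-q_1q_2)(1-q_1q_3)}{1-q_1}=\frac{q_1(1-q_2)(1-q_3)}{1-q_1}\,,
\end{align*}
where the last equalities use $1-q_2q_3=-(1-q_1)q_1^{-1}$ and $(1-q_1q_2)(1-q_1q_3)=q_1(1-q_2)(1-q_3)$ (both immediate from $q_1q_2q_3=1$). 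Therefore $\omega(w/z)e(q_1^{-1}z)+\omega(q_1w/z)e(q_1^{-1}w)$ has vanishing residue at $z=w$, hence is regular there, and multiplying by the regular factor $X^+_i(q_1^{-1}z)X^+_j(q_1^{-1}w)$ preserves regularity; applying $\mathop{\res}_{z=w}(\,\cdot\,)\tfrac{dz}{z}$ then yields $0$. The argument is uniform in the pair $i<j$ and requires no induction.

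I expect the genuine difficulty to lie not in this algebra but in the bookkeeping that legitimizes it: passing between the $\delta$-function (formal series) form and the rational-function form of $(eX^+_j)$ and of the $X^+X^+$-relations in the sense of Remark \ref{expand remark}, and identifying exactly which products (here $X^+_{i-1}(z)X^+_j(q_1^{-1}w)$, $X^+_i(q_1^{-1}z)X^+_{j-1}(w)$, $e(q_1^{-1}z)X^+_j(q_1^{-1}w)$ and $X^+_i(q_1^{-1}z)X^+_j(q_1^{-1}w)$) have all matrix coefficients rational with at worst a simple pole at $z=w$, so that ``residue at $z=w$'' and the above factorizations are unambiguous. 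Once that is in place the computation above closes the argument.
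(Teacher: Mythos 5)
Your proof is correct and follows essentially the same route as the paper's: both rest on the recursion of Corollary \ref{cor:rec-all}, the rational-function form of the relation $(eX^+_j)$, the regularity of the $\gamma$-normalized $X^+X^+$ products at the relevant point, and the cancellation of the residues of $\omega(w/z)$ and $\omega(q_1w/z)$ at $z=w$. The paper merely computes the residue of the first summand directly (via the delta-function term of $(eX^+_j)$) and identifies it with minus the residue of the second summand, instead of substituting the recursion in both terms and factoring out $X^+_i(q_1^{-1}z)X^+_j(q_1^{-1}w)$ as you do -- a cosmetic repackaging of the same argument.
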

\begin{proof}
We use the recursion of Corollary \ref{inductive X cor} we have
\begin{align*}
X^+_{i-1}(z)X^+_j(q_1^{-1}w)
&=c_1 X_i^+(q_1^{-1}z)
e(q_1^{-1}z)X^+_j(q_1^{-1}w)\,.
\end{align*}
Using that $X^+_i(z)X^+_{j}(w)$ does not have a pole at $z=w$, we have
\begin{align*}
\mathop{\res}_{z=w}&
X^+_{i-1}(z)X^+_{j}(q_1^{-1}w)\frac{dz}{z}
=\mathop{\res}_{z=w}\,
c_1X_i^+(q_1^{-1}z)
e(q_1^{-1}z)X^+_{j}(q_1^{-1}w) \, \frac{dz}{z}\\
&=c_1 X_i^+(q_1^{-1}w)\cdot
\mathop{\res}_{z=w} \,
e(q_1^{-1}z)X^+_{j}(q_1^{-1}w) \, \frac{dz}{z}
=c_1X_i^+(q_1^{-1}w)
\Bigl(-\frac{1}{s_1-s_1^{-1}}\Bigr)X^+_{j-1}(w)
\\
&=\frac{(1-q_2)(1-q_3)}{1-q_1^{-1}}
X_i^+(q_1^{-1}w)X^+_{j-1}(w)=-\mathop{\res}_{z=w}\,\omega\Bigl(\frac{q_1w}{z}
\Bigr)\frac{dz}{z}
\cdot
X_i^+(q_1^{-1}w)X^+_{j-1}(w)\,.
\end{align*}
\end{proof}

\begin{rem}{\rm
For $U_q(\widehat{\mathfrak{gl}}_2)$,
the usual quadratic relation $(z-q^2w)E(z)E(w)+(w-q^2z)E(w)E(z)=0$
reads in components as
\begin{align*}
&[E_{i},E_{j-1}]_{q^2}+[E_{j},E_{i-1}]_{q^2}=0\,.
\end{align*}
The relation in Lemma \ref{lem:XXoXX}
can be viewed as an affine analog of these relations.
\qed}
\end{rem}

The $X^-_i(z)X^-_j(w)$ relations are similar.

\subsection{The third group of relations.}
The check of  relations \eqref{EFshifted} for $[X^+_i(z),X^-_j(w)]$ is straightforward using the action of $k_r^\pm(z)$ given in Lemma \ref{k on fock lem}
and the coproduct of $k^\pm_r(z)$ given in Lemma \ref{lem:DeltaK}, where $\psi_0$ is set to $1$.

\medskip

{\bf Acknowledgments.\ }
BF is partially supported by ISF 3037 2025 1 1. MJ is partially supported by JSPS KAKENHI Grant Numbers 25K07041, 23K03137,
25K06942.
EM is partially supported by Simons Foundation grant number \#709444.

BF and EM thank Rikkyo University, where a part of this work was done, for hospitality.

\medskip

\end{document}